\documentclass[11pt]{article}
\usepackage[margin=0.85in,columnsep=0.28in]{geometry}
\usepackage{amsmath,amssymb,amsthm,bm}
\usepackage{booktabs}
\usepackage{array}
\usepackage{graphicx}
\usepackage{xcolor}
\usepackage[colorlinks=true,allcolors=blue]{hyperref}
\usepackage{cleveref}
\usepackage{setspace}
\usepackage{titlesec}
\titlespacing*{\section}{0pt}{1.2ex plus .2ex minus .2ex}{0.6ex plus .1ex}
\titlespacing*{\subsection}{0pt}{0.9ex plus .2ex minus .2ex}{0.4ex plus .1ex}
\titleformat{\section}{\normalfont\large\bfseries}{\thesection}{0.6em}{}
\titleformat{\subsection}{\normalfont\normalsize\bfseries}{\thesubsection}{0.5em}{}

\newcommand{\R}{\mathbb{R}}
\newcommand{\Ceps}{C_\epsilon}
\newcommand{\snorm}[2]{\|#1\|_{#2}}
\newtheorem{theorem}{Theorem}
\newtheorem{maintheorem}{Main Result}
\providecommand{\Ceps}{C_\epsilon}
\providecommand{\R}{\mathbb{R}}
\newtheorem{lemma}{Lemma}
\newtheorem{assumption}{Assumption}
\newtheorem{definition}{Definition}
\newtheorem{remark}{Remark}
\newtheorem{proposition}{Proposition}
\newcommand{\SIsec}[1]{SI~Appendix, Section~#1}

\title{\bf From estimate to proof: certified ground-state energy bounds for
singular Schr\"odinger operators}
\author{Xuefeng Liu\\[2pt]
\small Department of Information and Sciences, Tokyo Woman's Christian University\\
\small 2-6-1 Zempukuji, Suginami-ku, Tokyo 167-8585, Japan}
\date{Draft --- \today}

\begin{document}
\maketitle

\begin{center}
\fbox{\parbox{0.92\linewidth}{\small\textbf{Significance Statement.}
Many predictions in quantum chemistry, materials science, and spectral
geometry hinge on the lowest energy levels of a Schr\"odinger operator, yet
standard simulations return approximations with no guarantee of how far they
sit from the true value. We present a computational framework that returns
mathematically certified bounds---an interval provably containing the exact
energy---even for the singular, unbounded, sign-changing potentials of real
molecules, where existing certified methods fail. For the hydrogen molecular
ion this framework delivers, to our knowledge, the first guaranteed two-sided
enclosure of the true infinite-domain ground-state energy: an interval of width
below $5\times10^{-4}$ that provably contains the accepted reference value. The
key that makes a whole-space guarantee possible is an \emph{explicit}, computable
bound on the error of restricting the problem to a finite box---replacing the
classical argument that the wavefunction merely decays---which brackets the true
energy from both sides. Remarkably, the certified interval reproduces the
uncertified numerical value to eleven digits, so mathematical rigor costs almost
nothing in accuracy. The result turns eigenvalue computation from an estimate
into a proof.}}
\end{center}

\begin{abstract}
\noindent Computing guaranteed lower bounds for the eigenvalues of
Schr\"odinger operators is a central problem in computer-assisted analysis: for
the operators of real physical systems---with Coulomb singularities, unbounded
potentials, and, for molecules, sign-changing effective potentials---most
certified methods fail, and even recent high-accuracy lower bounds remain
floating-point \emph{estimates} rather than proofs. We present a two-stage
framework that closes this gap, and apply it to the hydrogen molecular ion in
three dimensions to obtain, to our knowledge, the first guaranteed two-sided
enclosure of the true infinite-domain ground-state energy,
\begin{equation*}
-0.5514436010\le\lambda_1(\R^3)\le-0.5509672618 ,
\end{equation*}
a rigorous interval of width $4.76\times10^{-4}$ that provably contains the
accepted reference value $-0.551317$. Two ingredients make this possible. First,
a first stage that combines a projection-based lower bound with a sharp
coercivity constant from an auxiliary eigenvalue problem generates its own
separation certificate---the datum the sharpening step needs---after which the
Lehmann--Goerisch second stage sharpens the ground-state bound to
Rayleigh--Ritz precision. Second, an \emph{explicit, computable} two-sided bound
on the domain-truncation error---a certified Neumann eigenvalue below and a
certified Dirichlet eigenvalue above---lifts the box computation to the whole
space, replacing Agmon's qualitative decay estimate with a quantitative
guarantee. The entire pipeline runs in verified interval arithmetic, yet the
certified bounds reproduce the uncertified floating-point values to eleven
significant digits: rigor sets no floor on accuracy, and the enclosure width is
governed by domain truncation alone. Because the two stages rely only on a
variational form and a coarse spectral gap, the approach extends to a broad
class of self-adjoint operators.
\end{abstract}

\section{Introduction}
Eigenvalues of Schr\"odinger operators set the energies of quantum systems, the
resonant frequencies of vibrating structures, and the fundamental constants of
spectral geometry. Numerical methods routinely estimate them to many digits,
but an estimate is not a bound: without an error certificate one cannot know
whether the true eigenvalue lies above or below the computed value, and for
questions such as the existence of a spectral gap or the sign of a stability
margin only a guaranteed bound will do. The distinction is subtle and easily
conflated with accuracy. Lower bounds have a long history in the physical
sciences: Temple's 1928 principle \cite{Temple1928} and Weinstein's residual
estimate \cite{Weinstein1934}, refined over the following decades
\cite{CohenFeldmann1979,Hill1980,Scrinzi1992,Pollak2019} and applied to atomic
and molecular systems, deliver increasingly tight lower bounds---but each
requires a-priori spectral input (a bound on a neighbouring eigenvalue, or a
mean-energy estimate) and is evaluated in floating-point. A recent advance in
this lineage revived Temple's two-sided principle through a Lanczos construction
and produced lower-bound \emph{approximations} that converge as fast as the
Rayleigh--Ritz upper bound \cite{MartinazzoPollak2020}; yet the practical
algorithm supplies the eigenvalue it needs by an iterative self-consistent
estimate and runs in ordinary floating-point, so its output---however many
digits it reproduces---is a highly accurate estimate of a lower bound rather
than a mathematically guaranteed one.
That gap between a sharp estimate and a certificate is precisely what verified
computation must close. Computer-assisted analysis therefore
seeks \emph{guaranteed lower eigenvalue bounds} (GLBs): a value provably below
the exact eigenvalue.

A mature body of work provides GLBs through finite elements. An early conforming
approach evaluates the eigenvalues of the Laplacian on polygonal domains with a
verified projection error \cite{LiuOishi2013}, later cast as a general framework
for self-adjoint differential operators that yields both lower and upper bounds
in one- to three dimensions, made rigorous by interval arithmetic
\cite{Liu2015}; nonconforming and mixed formulations
\cite{CarstensenGedicke2014,HuMa2025,CarstensenPuttkammer2024} sharpened the
constants and the convergence rates. These methods are powerful for the Laplace
operator on bounded domains, but two
obstacles stand between them and the operators of physical interest. First, the
potentials are hard: the Coulomb interaction $-Z/|x-a|$ is singular and
unbounded, and the effective potential of a molecule changes sign, so the
quadratic form is not obviously coercive and the integrals defining the stiffness
matrix are singular. Second, the sharpest lower-bound methods are
\emph{conditional}: the Lehmann--Goerisch method attains Rayleigh--Ritz accuracy
only when supplied with an a-priori lower bound on the \emph{next} eigenvalue---a
separation datum that is itself as hard to certify as the quantity one set out to
compute.

Two-stage strategies, in which a coarse guaranteed bound feeds a sharpening
step, are classical and widely used (e.g., the homotopy method by Plum can be regarded as a general multiple-stage version \cite{NakaoPlumWatanabe}); what distinguishes one realization from
another is how each stage is implemented and to which operators it reaches. The
present paper is the culmination of a sequence that pushed guaranteed bounds
toward the singular three-dimensional Coulomb problem from three directions. A
finite-element treatment of confining potentials, free of singularities,
established the confinement-based truncation argument on $\R^2$
\cite{LiuConfining2026}. A companion finite-element method for the genuinely
singular Coulomb potential on $\R^3$---the CECR-FEM---reached the certified
regime but not the required precision: even with about $4\times10^6$ degrees of
freedom it could not separate $\lambda_1$ from $\lambda_2$ by a certified lower
bound on $\lambda_2$ \cite{LiuNM2026}. A spectral-method framework then delivered
much higher precision and, validated on $\R^2$, supplied the theoretical
foundation for the projection lower bound used as Stage~A here
\cite{LiuSpectral2026}. A parallel finite-element two-stage method treats the
Laplace operator on bounded domains \cite{LiuPlum2025}. Against this background
the novelty of the present work is specific: it is the first to bring the
\emph{singular, three-dimensional} Coulomb problem---where the finite-element
route stalled on precision---to a certified high-accuracy bound \emph{and} a
guaranteed whole-space enclosure. Three ingredients make this possible: a global
cosine-spectral discretization that handles the Coulomb singularity through exact
moment integrals; a first stage that \emph{generates its own separation
certificate} rather than assuming one; and a complete reduction of the pipeline
to verified interval arithmetic. The result is, to our knowledge, the first
guaranteed and rigorously verified enclosure of the ground-state energy of a
three-dimensional molecular ion.

Our contribution is fourfold. (i) A first stage that couples a projection-based
lower bound---the spectral-Galerkin form of the author's framework
\cite{LiuSpectral2026}---with a \emph{sharp} coercivity constant, computed as an
auxiliary eigenvalue, which both bounds the ground state and certifies its
isolation.
(ii) A second stage built on the \emph{Lehmann--Goerisch} bound rather than the
classical Temple or Weinstein estimates. For a simple eigenvalue Temple's bound is
in fact the single-vector case of Lehmann's; the several-vector Lehmann
construction additionally brackets a whole cluster of eigenvalues, and its
Goerisch reformulation attains this while requiring only the energy
($H^1$) form of the trial functions---never the strong image $\mathcal Hu$ that
Temple's residual $\|\mathcal Hu\|$ and the $H^2$ regularity it presumes would
demand. For a singular Coulomb potential, where $\mathcal Hu$ is not even
square-integrable for generic $H^1$ trial functions, this is what makes a
Rayleigh--Ritz-quality lower bound computable at all; that the bound is as sharp
as the Ritz upper bound from the same eigenvector is made precise in
\SIsec{7.2}.
(iii) A precise account of the \emph{rigor contract}---exactly which integrals,
matrix entries, and algebraic identities must be evaluated exactly for the final
bound to be guaranteed, and which quantities may remain floating-point
approximations---together with its verified realization in interval arithmetic,
needing no special-function libraries and demonstrated on the hydrogen molecular
ion. (iv) An \emph{explicit, computable} two-sided bound on the domain-truncation
error (\Cref{sec:truncation})---a certified Neumann eigenvalue below the
whole-space energy and a certified Dirichlet eigenvalue above it---which lifts the
box computation to the true infinite domain, replacing Agmon's qualitative
exponential decay \cite{Agmon1982} with a guarantee valid for a box of a given
size.

Concretely, for the hydrogen molecular ion the framework certifies
$$\lambda_1(\R^3)\in[-0.5514436010,\allowbreak\,-0.5509672618],$$ a rigorous
interval of width
$4.76\times10^{-4}$ that provably contains the accepted reference value
$-0.551317$; the verified bounds reproduce the uncertified floating-point values
to eleven significant digits, so rigor sets no floor on the achievable accuracy.

\section{What ``error'' means for a computed energy}\label{sec:error}
A paper that claims rigor must say precisely which errors it controls. Between a
physical system and a printed energy lie four distinct approximations.
\emph{Modeling error} enters when the physics is cast as an operator (the
non-relativistic Born--Oppenheimer Schr\"odinger operator, spin and radiative
corrections neglected); \emph{domain and discretization error} when the
whole-space operator is truncated to a box and projected onto a finite basis;
\emph{algorithmic error} between the exact discrete solution and what an iterative
eigensolver returns; and \emph{arithmetic error}---floating-point rounding---in
every one of the millions of operations. A conventional computation argues its
accuracy informally, through convergence studies and cross-method agreement; the
result may be sharp to many digits, yet it remains an \emph{estimate}, proving
neither on which side of the true value it falls nor by how much.

\paragraph{Certified computation.} A certified computation replaces that informal
argument with a proof. It takes the mathematical model as its \emph{fixed starting
point} and bounds every subsequent error rigorously: discretization and truncation
through explicit, computable constants---for the box truncation, a two-sided bound
on the whole-space eigenvalue rather than the qualitative Agmon decay that
justifies truncation but leaves its error uncomputed; algorithmic error by
verifying the residual of the returned quantity; and arithmetic error by carrying
every operation in interval arithmetic. The output is not a number but an interval
guaranteed to contain the exact eigenvalue of the stated operator. Modeling error
is deliberately outside this scope---we ask not whether the Coulomb Hamiltonian is
the right physics, but what its exact ground-state energy is. That is the honest
boundary of the claim: the model is assumed, everything downstream is proved. This
ambition is realistic because verified computation is an established discipline; it
has resolved a conjecture of Laugesen and Siudeja on the second Dirichlet
eigenvalue of triangles \cite{EndoLiu2025} and established the existence of a
stationary three-dimensional Navier--Stokes solution \cite{LiuNakaoOishi2022}, so
a certified ground-state energy for a molecular operator is a natural next step.

\section{From the whole space to a bounded box}\label{sec:truncation}
The operator $\mathcal H=-\Delta+V$ is posed on all of $\R^3$, yet any computation
must replace it by a problem on a bounded box $\Omega$. Before describing the
algorithm we settle \emph{when} this restriction is legitimate and, more
importantly, how its error can be \emph{computed} rather than assumed small---so
that a bound obtained on $\Omega$ is a rigorous bound for the true whole-space
energy. This is why the truncation analysis comes first: it certifies the very
reduction on which everything downstream operates.

Classically, truncation is justified by decay. Agmon's theory \cite{Agmon1982}
shows that a confined ground state decays exponentially away from the nuclei, so a
sufficiently large box \emph{should} capture almost all of it. But this argument
is qualitative: it guarantees convergence as $\Omega\uparrow\R^3$ without supplying
a computable error for a box of a given size, leaving unknown Agmon constants in
the estimate. For a certified enclosure we need the truncation error bounded
explicitly on both sides.

Two elementary comparisons achieve this, one for each boundary condition on
$\Omega$.

\begin{theorem}[Whole-space enclosure]\label{thm:enc-main}
Let $\mu_k^{\mathrm{Neu}}(\Omega)$ and $\lambda_k^{\mathrm{Dir}}(\Omega)$ be the
$k$-th Neumann and Dirichlet eigenvalues of $\mathcal H=-\Delta+V$ on a bounded
box $\Omega\subset\R^3$, and let $\sigma(\Omega):=\inf_{x\notin\Omega}V(x)$ be the
confinement constant. Fix $k\in\{1,2,3,\dots\}$. If
$\sigma(\Omega)>\lambda_k(\R^3)$, then
\begin{equation}\label{eq:thm-trunc}
\mu_k^{\mathrm{Neu}}(\Omega)\le\lambda_k(\R^3)\le\lambda_k^{\mathrm{Dir}}(\Omega),
\end{equation}
so any certified $L^{\mathrm{Neu}}\le\mu_k^{\mathrm{Neu}}(\Omega)$ and
$U^{\mathrm{Dir}}\ge\lambda_k^{\mathrm{Dir}}(\Omega)$ bracket the $k$-th
whole-space eigenvalue,
\begin{equation}\label{eq:enclosure}
\lambda_k(\R^3)\in\bigl[\,L^{\mathrm{Neu}},\ U^{\mathrm{Dir}}\,\bigr].
\end{equation}
The upper bound holds for every $k$ by domain monotonicity; the lower bound uses
only that the exterior potential floor strictly clears the target level,
$\sigma(\Omega)>\lambda_k(\R^3)$. The ground state is the case $k=1$.
\emph{(Proof in \SIsec{1}.)}
\end{theorem}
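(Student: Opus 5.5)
The plan is to prove the two inequalities in \eqref{eq:thm-trunc} separately, both through the min--max characterization of the quadratic form $a(u,u)=\int|\nabla u|^2+V|u|^2$. We take $V$ to be form-bounded with respect to $-\Delta$ with relative bound $<1$, as the Coulomb potential is, so that all three forms are closed and semibounded. The form domains are $H^1_0(\Omega)$ for Dirichlet, $H^1(\Omega)$ for Neumann, and $H^1(\R^3)$ for the whole space. Here $\lambda_k(\R^3)$ denotes the $k$-th min--max value. When $\sigma(\Omega)>\lambda_k(\R^3)$, this value lies below $\inf\sigma_{\mathrm{ess}}$, because $\sigma$ bounds $\liminf_{|x|\to\infty}V$ and hence, by Persson's theorem, the essential spectrum.

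\emph{Upper bound.} Extension by zero embeds $H^1_0(\Omega)$ isometrically into $H^1(\R^3)$ and preserves the form value. Any $k$-dimensional trial space for the Dirichlet problem is therefore admissible for the whole-space problem. Min--max then gives $\lambda_k(\R^3)\le\lambda_k^{\mathrm{Dir}}(\Omega)$ for every $k$, with no use of $\sigma$.

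\emph{Lower bound.} Here we use Neumann bracketing. Split $\R^3=\Omega\cup\Omega^c$ and drop the interface coupling. Since $H^1(\R^3)\subset H^1(\Omega)\oplus H^1(\Omega^c)$ with $a_{\R^3}(u,u)=a_\Omega(u|_\Omega)+a_{\Omega^c}(u|_{\Omega^c})$, the min--max values of the decoupled operator $\mathcal H^N_\Omega\oplus\mathcal H^N_{\Omega^c}$ do not exceed those of $\mathcal H$. On the exterior piece,
\[
a_{\Omega^c}(v,v)\ge\int_{\Omega^c}V|v|^2\ge\sigma(\Omega)\|v\|^2 ,
\]
so $\mathcal H^N_{\Omega^c}\ge\sigma(\Omega)$. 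Hence the spectrum of the direct sum below $\sigma(\Omega)$ consists exactly of the Neumann eigenvalues $\mu_j^{\mathrm{Neu}}(\Omega)$ that lie below $\sigma(\Omega)$, counted with multiplicity.

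Now let $m$ be the $k$-th min--max value of the direct sum. Then $m\le\lambda_k(\R^3)<\sigma(\Omega)$. Since $m$ is below $\sigma(\Omega)$, and hence below the essential spectrum of the direct sum, it must be the $k$-th eigenvalue counted from below. That eigenvalue is $\mu_k^{\mathrm{Neu}}(\Omega)$, which gives $\mu_k^{\mathrm{Neu}}(\Omega)\le\lambda_k(\R^3)$. The enclosure \eqref{eq:enclosure} then follows immediately from the certified inequalities $L^{\mathrm{Neu}}\le\mu_k^{\mathrm{Neu}}$ and $U^{\mathrm{Dir}}\ge\lambda_k^{\mathrm{Dir}}$.

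The main obstacle is the technical justification of this splitting argument, in three points.
\begin{itemize}
\item The form on $H^1(\Omega^c)$ must be closed and bounded below when $V$ contains Coulomb terms. If a nucleus could lie outside $\Omega$, $\inf V=-\infty$ and the hypothesis already fails, so we assume the singularities lie inside $\Omega$. On $\Omega^c$, $V$ is then bounded below by $\sigma$ and the form is trivially closed.
\item On the bounded box, a Hardy/Kato inequality gives relative form bound zero, so the Neumann problem on $\Omega$ has compact resolvent and discrete spectrum.
\item We must be careful that "the $k$-th eigenvalue" of the direct sum is identified correctly. Strict inequality $\sigma>\lambda_k$ is exactly what guarantees that no exterior spectrum is interleaved among the first $k$ values.
\end{itemize}
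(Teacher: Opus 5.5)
Your argument is correct, but it takes a different route from the paper for the lower bound. The upper bound, via extension by zero and min--max, is the same in both.

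The paper does not build the decoupled operator $\mathcal H^N_\Omega\oplus\mathcal H^N_{\Omega^c}$. Instead it proceeds in three steps:
\begin{enumerate}
\item It restricts the first $k$ whole-space eigenfunctions $\psi_1,\dots,\psi_k$ to $\Omega$.
\item It shows the restriction map is injective on their span: a combination $u$ vanishing on $\Omega$ would satisfy $\lambda_k(\R^3)\|u\|^2\ge a_{\R^3}(u,u)\ge\sigma(\Omega)\|u\|^2$, which forces $u=0$ by strict confinement.
\item It checks directly that for every $v$ in the span the Neumann Rayleigh quotient $a_\Omega(v,v)/\|v\|_\Omega^2$ is at most $\lambda_k(\R^3)$. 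This holds because $a_{\R^3}(v,v)-\lambda_k\|v\|^2\le 0\le(\sigma-\lambda_k)\,m_{\rm ext}\le I_{\rm ext}-\lambda_k m_{\rm ext}$. Min--max on the bounded box then finishes.
\end{enumerate}

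This is the same mechanism as yours: additivity of the form across $\partial\Omega$, plus $a_{\Omega^c}\ge\sigma$ on the exterior. The paper carries it out concretely on a $k$-dimensional trial space. So it needs only the min--max principle for the Neumann problem on $\Omega$. It avoids the exterior Neumann operator and its closedness, the spectrum of a direct sum, and the identification of min--max values with eigenvalues below $\inf\sigma_{\rm ess}$.

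In exchange, the paper tacitly assumes that $\lambda_1(\R^3),\dots,\lambda_k(\R^3)$ are eigenvalues with $L^2$-orthonormal eigenfunctions. Your version works with min--max values throughout. Via Persson's theorem it also justifies that assumption from the hypothesis $\sigma(\Omega)>\lambda_k(\R^3)$. In your approach the strict inequality appears as ``no exterior spectrum interleaved''; in the paper it appears as injectivity of the restriction.

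Your standing assumption that $V$ is form-bounded with relative bound $<1$ is narrower than the statement's general $V$: it excludes, for instance, potentials growing at infinity. It does cover the Coulomb case the paper actually uses.
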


\paragraph{The lower side (Neumann).} If the truncated potential confines the
ground state, the lowest \emph{Neumann} eigenvalue on $\Omega$ lies below
$\lambda_1(\R^3)$, so a guaranteed lower bound computed with Neumann conditions is
a rigorous lower bound for the whole-space eigenvalue. This one-sided confinement
criterion---sharper than the classical mass-truncation estimate, with no
deteriorating exterior factor---is the Dirichlet--Neumann bracketing argument of
\cite{LiuConfining2026}. It is the lower bound we sharpen with the two-stage
algorithm of the next section.

\paragraph{The upper side (Dirichlet), and its limitation.} For the upper bound we
deliberately use the lowest \emph{Dirichlet} eigenvalue on $\Omega$, which lies
\emph{above} $\lambda_1(\R^3)$ by domain monotonicity ($H^1_0(\Omega)\subset
H^1(\R^3)$ by extension by zero): a Rayleigh--Ritz value for the Dirichlet problem
is therefore a rigorous whole-space upper bound, obtained with exactly the same
box discretization as the lower bound. Its limitation is intrinsic and worth
stating plainly. Forcing the trial functions to vanish on $\partial\Omega$ is an
artificial constraint that the true ground state---nonzero throughout $\R^3$---does
not satisfy, so the Dirichlet eigenvalue \emph{overestimates} $\lambda_1(\R^3)$ by
an amount that no grid refinement can remove; only enlarging $\Omega$ reduces it.
The results below quantify this overshoot and show it is the dominant term in the
final enclosure width. A conforming alternative avoids the constraint altogether:
one may take trial functions supported on all of $\R^3$---box eigenfunctions
augmented by exponentially decaying tails, or Slater/Gaussian orbitals centred on
the nuclei---and obtain a whole-space Rayleigh--Ritz upper bound directly, with no
truncation bias. We return to this in the Discussion as the natural route to
sharpening the upper side; here we map out precisely how far the
Dirichlet-on-a-box approach can be pushed and where it stalls.

\paragraph{Higher levels supply the separator.} The enclosure \eqref{eq:thm-trunc}
is not confined to the ground state: taken at $k=2$, under the mild condition
$\sigma(\Omega)>\lambda_2(\R^3)$ it yields a \emph{rigorous lower bound on}
$\lambda_2(\R^3)$ through the second Neumann eigenvalue,
$\mu_2^{\mathrm{Neu}}(\Omega)\le\lambda_2(\R^3)$. This is exactly the certified
separator $\rho$ with $\lambda_1<\rho\le\lambda_2$ that the second stage requires
(\Cref{eq:separation}): the Lehmann--Goerisch sharpening of $\lambda_1$, and in
particular the whole-space $\lambda_1$ lower bound obtained on the pure-Gaussian
trial space below, is seeded by a rigorous $\lambda_2(\R^3)$ lower bound that the
$k=2$ case of \Cref{thm:enc-main} certifies from the same box computation.

\paragraph{Separating truncation from discretization.} The residual width of
\eqref{eq:enclosure} is the \emph{domain-truncation} error---the gap between the
box eigenvalues and $\lambda_1(\R^3)$---which shrinks as the box grows. That this
gap is bounded explicitly on \emph{both} sides is the crux: it converts Agmon's
qualitative decay into a computable two-sided estimate requiring only the analytic
constant $\sigma(\Omega)$. A single box cannot separate truncation error from
discretization error; two nested boxes $\Omega_1\subset\Omega_2$ can, because
holding both a Neumann and a Dirichlet bracket on each box exposes which part of
the width is truncation (nearly $N$-independent) and which is discretization
(vanishing in $N$). Enlarging the box trades truncation error for a more crowded
spectral gap, which a higher spectral order and a directly certified separator
restore---the interplay quantified in \Cref{sec:results}.

\section{The two-stage framework}\label{sec:framework}
Let $\mathcal H = -\Delta + V$ be a self-adjoint Schr\"odinger operator on a
domain $\Omega\subset\R^d$ with a variational form
$a(u,v)=(\nabla u,\nabla v)+(Vu,v)$ on a Hilbert space with inner product
$(\cdot,\cdot)$. We seek a guaranteed lower bound for the lowest eigenvalue
$\lambda_1$, and, where possible, a two-sided enclosure. A conforming Galerkin
subspace yields Rayleigh--Ritz values $\mu_{k,N}$ that bound the eigenvalues
\emph{from above}; the difficulty throughout is the lower bound.

\subsection{Stage A: projection bound}
The projection (or Rayleigh--Ritz complementary) approach converts an upper
bound into a lower bound at the cost of a constant that measures how well the
finite-dimensional space captures the form. For a potential with a negative or
singular part this constant is governed by a coercivity shift $\sigma$ such that
$a(u,u)+\sigma\,(u,u)$ is positive definite. Fix once and for all a splitting
parameter $\epsilon\in(0,1)$, which apportions the singular potential between the
kinetic form and the shift. The crucial observation is that the sharp admissible
shift equals a form constant $\Ceps=-\eta(\epsilon)$, where $\eta(\epsilon)$ is the
lowest eigenvalue of the \emph{auxiliary} eigenvalue problem $\epsilon(-\Delta)+V$
posed on the same discretization. (The same $\epsilon$ reappears in the projection
constant of \Cref{thm:proj} as the factor $1-\epsilon$; smaller $\epsilon$ enlarges
the shift but relaxes the trial-space requirement, and it is optimized offline.) Computing $\eta$ directly---rather than bounding it by a
Hardy-type inequality---replaces a pessimistic analytic constant by its exact
value and handles the potential $V$ directly, with no positivity assumption:
a negative part is admissible as long as the shifted form is positive definite.

Stage A returns two things: a guaranteed lower bound $L_1$ for $\lambda_1$, and
a guaranteed lower bound $L_2$ for $\lambda_2$. When $L_2$ exceeds the
Rayleigh--Ritz upper bound $U_1$ for $\lambda_1$, the two eigenvalues are
provably separated, and
\begin{equation}\label{eq:separation}
\lambda_1 < \rho := L_2 \le \lambda_2 ,
\end{equation}
so $\rho$ is a \emph{certified} lower bound on the second eigenvalue---exactly
the datum the second stage requires. The last inequality in \eqref{eq:separation}
deserves emphasis: $L_2$ is a lower bound on the \emph{box} Neumann eigenvalue
$\mu_2^{\mathrm{Neu}}(\Omega)$, and it is the $k{=}2$ case of
\Cref{thm:enc-main}---$\mu_2^{\mathrm{Neu}}(\Omega)\le\lambda_2(\R^3)$---that
carries it to a lower bound on the \emph{whole-space} second eigenvalue
$\lambda_2(\R^3)$. \Cref{thm:enc-main} is the only result here that lower-bounds a
$\lambda_k(\R^3)$ with $k>1$; its $k{=}2$ instance is what makes $\rho$ a
whole-space separator, as the pure-Gaussian bound below requires.
\Cref{fig:convergence} quantifies the resulting tightening.

\paragraph{Discrete setting.} Fix the shifted, positive-definite form
$a_\sigma(u,v):=a(u,v)+\sigma(u,v)$ and the $L^2$ inner product $b(u,v):=(u,v)$.
On the cosine trial space $V_N\subset H^1(\Omega)$---the span of the tensor-product
Neumann modes with index $\le N$ per axis, of dimension $\dim V_N$---the
\emph{discrete eigenvalues} $\mu_{1,N}\le\cdots\le\mu_{\dim V_N,N}$ are defined by
the generalized matrix eigenvalue problem for the pair $(a_\sigma,b)$ restricted to
$V_N$, equivalently by the Rayleigh--Ritz min--max quotient
\begin{equation}\label{eq:ritz}
\mu_{k,N}+\sigma=\min_{\substack{S\subset V_N\\ \dim S=k}}\ \max_{0\ne v\in S}
\frac{a_\sigma(v,v)}{b(v,v)} .
\end{equation}
These are the Rayleigh--Ritz upper bounds $\mu_{k,N}\ge\mu_k$. Let
$\Pi_N:H^1(\Omega)\to V_N$ be the modal ($a_\sigma$-orthogonal) projection, and let
$C_N$ be the projection constant relating the $L^2$- and energy-norms of the
truncation error, $\|(I-\Pi_N)v\|\le C_N\,\|(I-\Pi_N)v\|_{a_\sigma}$. The theorem
below bounds $C_N$ for the cosine space and converts the upper bounds
\eqref{eq:ritz} into lower bounds; it is the spectral-Galerkin instance of the
abstract projection principle of \cite{LiuSpectral2026,Liu2015}, whose proof
(reproduced in \SIsec{2}, Theorem~1) requires only the Gram relations of the exact
eigenpairs and the two projection properties of $\Pi_N$.

\begin{theorem}[Projection lower bound]
\label{thm:proj}
Let $\epsilon\in(0,1)$ be a fixed
 splitting parameter  and let $\sigma$ be a coercivity
shift making $a_\sigma(u,u)=a(u,u)+\sigma(u,u)$ positive definite; the sharp
choice is $\sigma=-\eta(\epsilon)$, with $\eta(\epsilon)$ the lowest eigenvalue of
the auxiliary form $\epsilon(-\Delta)+V$ on $V_N$. With $\mu_{k,N}$ the
Rayleigh--Ritz values \eqref{eq:ritz} and $\nu_*$ the first omitted Laplacian
eigenvalue of $V_N$, for every $k\le\dim V_N$,
\begin{equation}\label{eq:thm-proj}
\mu_k\ \ge\ \frac{\mu_{k,N}+\sigma}{1+\widehat C_N^2(\mu_{k,N}+\sigma)}-\sigma
\end{equation}
where $\nu_*=\big((N{+}1)\pi/(2L_x)\big)^2$, the projection constant is
$\widehat C_N^2=(1+g^2)/[(1-\epsilon)\nu_*]$ with the \emph{same} $\epsilon$ as in
the shift, and $g=(\mu_{1,N}+\sigma)^{-1/2}\eta_V$. Here $\eta_V$ is the potential
form factor (defined abstractly in \SIsec{3}, Definition~1, and made explicit for
the Coulomb operator in \SIsec{4}, Lemma~5), not to be confused with the auxiliary
eigenvalue $\eta(\epsilon)$ that fixes the shift $\sigma=-\eta(\epsilon)$.
Since $\sigma,\epsilon,g$ are
independent of $N$ and $\nu_*=O(N^2)$, the gap $\mu_{k,N}-\mu_k$ is $O(N^{-2})$
with no floor. Applied at $k=1,2$ it returns guaranteed lower bounds $L_1,L_2$;
when $\inf(L_2)>U_1$ the eigenvalues are provably separated. \emph{(Proof in
\SIsec{5}.)}
\end{theorem}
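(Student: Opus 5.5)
The proof splits into two independent parts. First comes the abstract projection principle, which turns any bound on $C_N$ into the lower bound \eqref{eq:thm-proj}. Second comes the concrete estimate of $C_N$ for the cosine space, which should give $C_N\le\widehat C_N$ with $\widehat C_N^2=(1+g^2)/[(1-\epsilon)\nu_*]$.

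\textbf{Abstract principle.} I would follow the argument of \cite{Liu2015,LiuSpectral2026}. Let $u_1,\dots,u_k$ be exact eigenfunctions of the shifted pair $(a_\sigma,b)$, orthonormal in $b$, with eigenvalues $\lambda_i+\sigma>0$. Set $E_k=\mathrm{span}\{u_i\}$ and take the test space $S=\Pi_N E_k$.

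For $v\in E_k$ the Gram relations give $\|v\|_{a_\sigma}^2\le(\mu_k+\sigma)\|v\|^2$. Since $\Pi_N$ is $a_\sigma$-orthogonal, $\|\Pi_N v\|_{a_\sigma}\le\|v\|_{a_\sigma}$. For the $L^2$ norm I would use
\[
\|\Pi_N v\|\ge \|v\|-\|(I-\Pi_N)v\|\ge\|v\|-C_N\|(I-\Pi_N)v\|_{a_\sigma}.
\]
This triangle-inequality version is too lossy. The sharper route uses the identity $b(v-\Pi_Nv,\Pi_Nv)$ together with the relation $b(v,w)=a_\sigma(v,w)/(\lambda+\sigma)$ for eigenfunctions, which yields
\[
\|v\|^2\le\|\Pi_Nv\|^2+C_N^2\|v\|_{a_\sigma}^2 .
\]
From this, $\dim S=k$, and inserting $S$ into the min--max \eqref{eq:ritz} gives
\[
\mu_{k,N}+\sigma\le \frac{\mu_k+\sigma}{1-C_N^2(\mu_k+\sigma)}.
\]
Rearranging gives exactly \eqref{eq:thm-proj} with $C_N$ in place of $\widehat C_N$. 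The right-hand side is monotone in $C_N$, so $\widehat C_N$ may replace $C_N$. The case $C_N^2(\mu_k+\sigma)\ge1$ makes the bound trivial and needs no argument.

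\textbf{Bounding $C_N$.} Write $e=(I-\Pi_N)v$. Since $e$ is not $L^2$-orthogonal to $V_N$ in general, I would split it as $e=e^\perp+e^\parallel$, where $e^\perp$ lies in the span of the omitted cosine modes. For the orthogonal part, Poincaré on the omitted modes gives $\nu_*\|e^\perp\|^2\le\|\nabla e^\perp\|^2$. The splitting with $\epsilon$ then controls the potential, since by definition of $\eta(\epsilon)$ and $\sigma=-\eta(\epsilon)$ one has
\[
\|e\|_{a_\sigma}^2\ge(1-\epsilon)\|\nabla e\|^2 .
\]
The parallel component arises only through the coupling of $V$ between high and low modes. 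Using $a_\sigma$-orthogonality of $e$ to $V_N$, I expect $\|e^\parallel\|$ to be controlled by $(\mu_{1,N}+\sigma)^{-1/2}$ times the form factor $\eta_V$ (\SIsec{3}, Definition~1) times $\|e\|_{a_\sigma}$, up to the same $\nu_*^{-1/2}$ factor. Adding the squares gives the factor $1+g^2$, and with it $\widehat C_N^2$.

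\textbf{Remaining claims.} The $O(N^{-2})$ rate is immediate: $\widehat C_N^2=O(\nu_*^{-1})$ while $\sigma,\epsilon,g$ do not depend on $N$, and $\nu_*=((N+1)\pi/(2L_x))^2$ is the smallest omitted Neumann Laplacian eigenvalue on the box. The separation statement then follows from $L_2\le\mu_2$ together with $\mu_1\le U_1$.

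\textbf{Main obstacle.} I expect the hardest step to be the bound on $e^\parallel$. It must use the exact definition of $\eta_V$ for the singular Coulomb term, and in particular a bound $\|Vw\|_{H^{-1}}$-type on $V_N$ normalized by the lowest Ritz value. My first attempt would be to test $a_\sigma(e,w)=0$ with $w=e^\parallel$ and apply Cauchy--Schwarz in the $a_\sigma$-norm on $V_N$.
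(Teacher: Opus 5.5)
There is a genuine gap in your abstract principle: the inequality $\|v\|^2\le\|\Pi_Nv\|^2+C_N^2\|v\|_{a_\sigma}^2$ on $E_k$ is false, and the cross term cannot produce it. Expand $\|v\|^2=\|\Pi_Nv\|^2+2b(v-\Pi_Nv,\Pi_Nv)+\|v-\Pi_Nv\|^2$. For $v=u_1$, the eigen-relation and $a_\sigma$-orthogonality give $b(v-\Pi_Nv,\Pi_Nv)=\|\Pi_Nv\|^2\bigl(R(\Pi_Nv)/(\mu_1+\sigma)-1\bigr)\ge0$, where $R$ is the $a_\sigma/b$ Rayleigh quotient. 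So this term has the wrong sign to be discarded.

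Here is a concrete counterexample. Take $b$ Euclidean on $\R^2$, $a_\sigma(x,y)=x_1y_1+Lx_2y_2$, $E_1=\operatorname{span}\{e_1\}$ and $V_N=\operatorname{span}\{(1,L^{-1/2})\}$. Then $\Pi_Ne_1=\tfrac12(1,L^{-1/2})$, and the range of $I-\Pi_N$ is spanned by $(1,-L^{-1/2})$. Hence $C_N^2=\tfrac12(1+L^{-1})<1$, which is the nontrivial regime since the lowest eigenvalue is $1$. Yet $\|\Pi_Ne_1\|^2+C_N^2\|e_1\|_{a_\sigma}^2=\tfrac34(1+L^{-1})<1=\|e_1\|^2$ for $L>3$. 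The theorem itself holds in this example. Only your intermediate step fails, and with it your derivation of $\dim S=k$ and of the Rayleigh-quotient bound on $\Pi_NE_k$.

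The repair is the triangle inequality you set aside, used as in the paper (SI, Theorem S1). It loses only if $\|\Pi_Nv\|_{a_\sigma}$ and $\|(I-\Pi_N)v\|_{a_\sigma}$ are each bounded by $\|v\|_{a_\sigma}$ instead of being recombined.

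If $\Pi_N\phi=0$ for some $0\ne\phi\in E_k$, then $\|\phi\|\le C_N\|\phi\|_{a_\sigma}$ forces $C_N^2(\mu_k+\sigma)\ge1$, and the bound is trivial. Otherwise $\dim\Pi_NE_k=k$, and min--max gives $\phi^*\in E_k$ with $\|\Pi_N\phi^*\|^2\le(\mu_{k,N}+\sigma)^{-1}\|\Pi_N\phi^*\|_{a_\sigma}^2$. Then
\[
\|\phi^*\|\le(\mu_{k,N}+\sigma)^{-1/2}\|\Pi_N\phi^*\|_{a_\sigma}+C_N\|\phi^*-\Pi_N\phi^*\|_{a_\sigma}\le\bigl((\mu_{k,N}+\sigma)^{-1}+C_N^2\bigr)^{1/2}\|\phi^*\|_{a_\sigma},
\]
by Cauchy--Schwarz in $\R^2$ and the Pythagoras identity $\|\phi\|_{a_\sigma}^2=\|\Pi_N\phi\|_{a_\sigma}^2+\|\phi-\Pi_N\phi\|_{a_\sigma}^2$. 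Combined with $\|\phi^*\|_{a_\sigma}^2\le(\mu_k+\sigma)\|\phi^*\|^2$, this gives $(\mu_k+\sigma)^{-1}\le(\mu_{k,N}+\sigma)^{-1}+C_N^2$, which is \eqref{eq:thm-proj}.

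Your bound on $C_N$ follows the paper's route: your $e^\perp$ and $e^\parallel$ are its $v-\Pi_N^0v$ and $\Pi_N^0v-\Pi_Nv$. In the step you flag, do not use Cauchy--Schwarz in $a_\sigma$. That only gives $\|e^\parallel\|_{a_\sigma}\le\|e^\perp\|_{a_\sigma}$ and loses the factor $\nu_*^{-1/2}$. Instead, use that the gradient and $L^2$ parts of $a_\sigma(e^\perp,e^\parallel)$ vanish, because the cosine modes are orthogonal in both. Then $\|e^\parallel\|_{a_\sigma}^2=-(Ve^\perp,e^\parallel)\le\eta_V\|e^\perp\|\,\|e^\parallel\|_{a_\sigma}$, hence $\|e^\parallel\|\le g\|e^\perp\|$, and you close with $\|\nabla e^\perp\|\le\|\nabla e\|$.

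Finally, $\|e\|_{a_\sigma}^2\ge(1-\epsilon)\|\nabla e\|^2$ is applied to $e\notin V_N$. So $\eta(\epsilon)$ must be the infimum over $H^1(\Omega)$, or a certified lower bound for it. The Ritz value on $V_N$ will not do, since it lies above that infimum.
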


\subsection{Stage B: Lehmann--Goerisch sharpening}
Given the certified $\rho$, the Lehmann--Goerisch method produces a lower bound
for $\lambda_1$ of Rayleigh--Ritz quality. We use it in preference to Temple's or
Weinstein's classical estimates for the reasons noted in the introduction: it
brackets a cluster, not a single eigenvalue, and---through the Goerisch
device---uses only the $H^1$ form of the trial functions, avoiding the strong
image $\mathcal Hu$ that a singular Coulomb potential renders unavailable
(\SIsec{7.2}). Because the abstract theorem is stated for a general variational
eigenvalue problem and its objects are easy to misread, we set them out in full;
the notation follows Goerisch.

\paragraph{Variational setting.} The eigenvalue problem is posed in the form
\begin{equation}\label{eq:lg-evp}
  M(u,v)=\lambda\,N(u,v)\qquad\forall v\in D ,
\end{equation}
where $D$ is a real vector space and $M(\cdot,\cdot),N(\cdot,\cdot)$ are symmetric bilinear forms on $D$,
with $M(\cdot,\cdot)$ positive definite. For the shifted Schr\"odinger operator this is our
Stage-A setting: $D=H^1(\Omega)$ (Neumann), $M(\cdot,\cdot)=a_\sigma(\cdot,\cdot)$ the shifted energy form
$a_\sigma(u,v)=a(u,v)+\sigma(u,v)$, and $N(\cdot,\cdot)=(\cdot,\cdot)_{L^2}$; the eigenvalues
of \eqref{eq:lg-evp} are $\lambda_k=\mu_k+\sigma$, shifted copies of the box
eigenvalues. The method needs, in addition to $D$, a second (auxiliary) space
carrying the ``squared-operator'' information: a real vector space $X$ with a
symmetric \emph{positive semidefinite} form $b_G$, and a linear operator
$T:D\to X$ that factors $M(\cdot,\cdot)$ through $X$,
\begin{equation}\label{eq:lg-T}
  b_G(Tu,Tv)=M(u,v)\qquad\forall u,v\in D .
\end{equation}
The pair $(X,b_G,T)$ is Goerisch's device for representing $M(\cdot,\cdot)$ without ever
forming $H^2$: any $(X,b_G,T)$ satisfying \eqref{eq:lg-T} is admissible, which is
what makes the method computable for operators whose square is inaccessible.

\paragraph{Trial data and the defect vectors.} One supplies $n$ trial vectors
$v_1,\dots,v_n\in D$ (approximate low eigenfunctions) and, for each, an element
$w_i\in X$ satisfying the \emph{exact} constraint
\begin{equation}\label{eq:lg-w}
  b_G(w_i,Tv)=N(v_i,v)\qquad\forall v\in D .
\end{equation}
The $w_i$ are not unique; the sharpest bound takes $w_i=T\widetilde u_i$ with
$M(\widetilde u_i,\phi)=N(v_i,\phi)$ (Lehmann's solve), but \eqref{eq:lg-w} may be
met in any finite subspace $X_h\subset X$ by minimizing $b_G(w_i,w_i)$,  and only
\eqref{eq:lg-w}---not the minimization---must hold exactly for rigor. This is the
one place where an \emph{approximate} object (the choice of $w_i$) is admitted
without breaking the guarantee: in the dense-interval Coulomb setting no exact
$w_i$ exists, so \eqref{eq:lg-w} is discharged in the enclosure sense through the
Goerisch defect identity, which encloses each $A_2$ entry from an inexact solve
with a one-sided residual bound (\SIsec{7.1}).

\begin{assumption}[Lehmann--Goerisch data]\label{as:lg}
\textup{(A1)} $D$ is a real vector space; $M(\cdot,\cdot),N(\cdot,\cdot)$ are symmetric bilinear forms on
$D$ with $M(\cdot,\cdot)$ positive definite. \textup{(A2)} There is a complete system of
eigenpairs $(\lambda_i,\varphi_i)$ of \eqref{eq:lg-evp}, $M(\cdot,\cdot)$-orthonormal, with
$N(v,v)=\sum_i\lambda_i|N(v,\varphi_i)|^2$ for all $v\in D$. \textup{(A3)} $X$,
$b_G$, and $T$ satisfy \eqref{eq:lg-T}. \textup{(A4)} $w_1,\dots,w_n\in X$ satisfy
\eqref{eq:lg-w}. \textup{(A5)} For a chosen $\rho>0$ the matrices
\begin{equation}\label{eq:lg-AB}
\begin{aligned}
  &A_0=[M(v_i,v_j)],\quad A_1=[N(v_i,v_j)],\\
  &A_2=[b_G(w_i,w_j)],\quad A=A_0-\rho A_1,\\
  &B=A_0-2\rho A_1+\rho^2A_2,
\end{aligned}
\end{equation}
are formed, and $B$ is positive definite; $\nu_1\le\cdots\le\nu_n$ are the
eigenvalues of $Az=\nu Bz$.
\end{assumption}

\begin{theorem}[Lehmann--Goerisch]\label{thm:lg-main}
Under \Cref{as:lg}, for each $k$ with $1\le k\le q$ (where $q$ is the number of
negative $\nu_k$) the interval $[\,\rho-\rho/(1-\nu_k),\,\rho\,)$ contains at
least $k$ eigenvalues of \eqref{eq:lg-evp}, counted with multiplicity. In
particular, if $\rho$ satisfies $\lambda_1<\rho\le\lambda_2$, the left endpoint
$\rho-\rho/(1-\nu_1)$ is a rigorous lower bound for $\lambda_1$, of
Rayleigh--Ritz quality and monotone in $\rho$. A sufficient and readily verified
condition for $B\succ0$ is $\rho>\Lambda_n$, where $\Lambda_n$ is the largest
eigenvalue of $A_0z=\Lambda A_1z$ (assuming $A_1\succ0$). Seeded with the Stage-A
certificate $\rho:=L_2$ (valid whenever $U_1<L_2$), the bound is unconditional.
\emph{(Proof in \SIsec{7}.)}
\end{theorem}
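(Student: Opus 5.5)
I plan to prove \Cref{thm:lg-main} in the classical Lehmann--Goerisch way: reduce it to a Lehmann-type min--max argument for an auxiliary pencil whose eigenvalues are the transformed quantities $\tau_i:=1/(\rho-\lambda_i)$ (or an equivalent Möbius image). The first step shows that for any $v=\sum_i z_i v_i$ in the trial span and $w=\sum_i z_i w_i$, the quadratic form $z^\top B z$ dominates a computable expression in the exact eigen-expansion of $v$. Using (A2), I write $v=\sum_j \alpha_j\varphi_j$ with $\alpha_j=M(v,\varphi_j)$, so that $M(v,v)=\sum\alpha_j^2$ and $N(v,v)=\sum\lambda_j^{-1}\ldots$. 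I will fix the exact normalization from (A2) so that $N(v,\varphi_j)=\alpha_j/\lambda_j$.

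Next I introduce the exact Lehmann element $\tilde u\in D$ with $M(\tilde u,\phi)=N(v,\phi)$. Its expansion coefficients are $\alpha_j/\lambda_j$. By \eqref{eq:lg-T} and \eqref{eq:lg-w}, $w-T\tilde u$ is $b_G$-orthogonal to $T(D)$. Hence $b_G(w,w)\ge b_G(T\tilde u,T\tilde u)=M(\tilde u,\tilde u)=\sum\alpha_j^2/\lambda_j^2$, using positive semidefiniteness of $b_G$; this is the Goerisch step. It gives
\[
z^\top A z=\sum_j\alpha_j^2\Bigl(1-\tfrac{\rho}{\lambda_j}\Bigr),\qquad z^\top Bz\ \ge\ \sum_j\alpha_j^2\Bigl(1-\tfrac{\rho}{\lambda_j}\Bigr)^2 .
\]
So the quotient $z^\top Az/z^\top Bz$ is bounded by a Rayleigh quotient of the operator with eigenvalues $t_j:=(1-\rho/\lambda_j)$ in a weighted inner product, with weights $(1-\rho/\lambda_j)^2$. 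The counting is cleanest in these variables. The map $s\mapsto 1/s$, with $s_j:=1-\rho/\lambda_j$, sends the weighted quotient $\sum\alpha_j^2 s_j/\sum\alpha_j^2 s_j^2$ to a quotient of eigenvalues $1/s_j$.

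Then I apply the min--max principle. If $\nu_k<0$, there is a $k$-dimensional $z$-subspace on which $z^\top Az\le\nu_k z^\top Bz<0$. I pass to the corresponding $k$-dimensional subspace of $v$'s; injectivity follows from $B\succ0$ together with the inequality above. On that subspace, $\sum\alpha_j^2 s_j\le\nu_k\sum\alpha_j^2 s_j^2$. A counting argument then says at least $k$ indices $j$ satisfy $1/s_j\le\nu_k$ with $s_j<0$. Otherwise the subspace would meet the complement spanned by eigenfunctions with $s_j\ge0$ or $1/s_j>\nu_k$, and on that complement the quotient exceeds $\nu_k$, which is a contradiction. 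Solving $1/(1-\rho/\lambda)\le\nu_k<0$ gives $\lambda<\rho$ and $\lambda\ge\rho-\rho/(1-\nu_k)$, which is exactly the stated interval.

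For the corollaries: with $\lambda_1<\rho\le\lambda_2$, only $\lambda_1$ lies below $\rho$, so $k=1$ yields the lower bound. Monotonicity in $\rho$ comes from the explicit form. The main obstacle I expect is the sufficient condition $\rho>\Lambda_n\Rightarrow B\succ0$. For this I would use $B=A_0-2\rho A_1+\rho^2A_2$, with $A_2\succeq A_1A_0^{-1}A_1$ by the Goerisch inequality on the span, and complete the square to get $B\succeq(A_0-\rho A_1)A_0^{-1}(A_0-\rho A_1)$. That bound is positive definite since $A_0-\rho A_1\prec0$ is nonsingular when $\rho>\Lambda_n$.
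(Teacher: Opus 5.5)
Your proposal is correct, but it takes a different route from the paper. In the SI the paper proves only the single-vector case in full, via Temple's spectral-measure inequality $((H-\lambda_1)(H-\rho)u,u)\ge0$; this needs $u\in D(H)$ and the strong moment $(Hu,Hu)$. For $n\ge2$ it identifies $A$ and $B$ with defect Gram matrices and then cites Lehmann (1963) and Goerisch--Haunhorst (1985). It asserts without proof that Goerisch's $b_G(w_i,w_j)$ may replace $(Hu_i,Hu_j)$, and it gives no argument for the sufficient condition $\rho>\Lambda_n\Rightarrow B\succ0$.

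You instead give the standard textbook proof of the Lehmann--Goerisch theorem, using only the form data of the Assumption:
\begin{itemize}
\item $w-T\tilde u$ is $b_G$-orthogonal to $T(D)$, which gives $b_G(w,w)\ge\sum_j\alpha_j^2/\lambda_j^2$ and hence $z^\top Bz\ge\sum_j\alpha_j^2s_j^2$.
\item A dimension count against the $M$-orthonormal eigenfunctions then yields the cluster statement for every $k\le q$.
\item The same Cauchy--Schwarz inequality gives $A_2\succeq A_1A_0^{-1}A_1$. Your completed square $B\succeq(A_0-\rho A_1)A_0^{-1}(A_0-\rho A_1)$ then proves the sufficient condition, with $A_0\succ0$ following from $A_1\succ0$.
\end{itemize}
Your route proves exactly the theorem as stated, with only $H^1$ data and in the main text's $M,N,b_G$ conventions. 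By contrast, the SI's restatement puts $[(u_i,u_j)]$, $[a(u_i,u_j)]$, $[(Hu_i,Hu_j)]$ into the slots of the Goerisch pencil, where the map $\rho-\rho/(1-\nu)$ returns $1/\lambda$ rather than $\lambda$ for an exact eigenvector. The paper's route is shorter and gives an explicit closed formula for the case $n=1$ actually used in the computations.

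A few points to tighten:
\begin{itemize}
\item When $D$ is merely a vector space, the Lehmann element $\tilde u\in D$ need not exist. You can avoid it with $f_J=\sum_{j\le J}(\alpha_j/\lambda_j)\varphi_j\in D$, for which $b_G(w,Tf_J)=M(f_J,f_J)=\sum_{j\le J}\alpha_j^2/\lambda_j^2$. Cauchy--Schwarz in the semidefinite $b_G$ then gives $\sum_{j\le J}\alpha_j^2/\lambda_j^2\le b_G(w,w)$ for every $J$. In the paper's $H^1(\Omega)$ setting, Riesz supplies $\tilde u$ anyway.
\item Indices with $\lambda_j=\rho$ have $s_j=0$, where ``the quotient exceeds $\nu_k$'' is meaningless. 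They are disposed of by the strict inequality $\sum_j\alpha_j^2s_j=z^\top Az<0$, which you already have.
\item Monotonicity in $\rho$ should be shown rather than read off. For $n=1$, writing $a_i$ for the scalar entries of $A_i$, the bound is $L(\rho)=(\rho a_1-a_0)/(\rho a_2-a_1)$ and $L'(\rho)=(a_0a_2-a_1^2)/(\rho a_2-a_1)^2\ge0$ by the same Cauchy--Schwarz.
\item The ``unconditional'' clause is just $\lambda_1\le U_1<L_2\le\lambda_2$.
\item Your proposal does not address the ``Rayleigh--Ritz quality'' remark; the paper treats it separately, for $n=1$, in the SI sharpness proposition.
\end{itemize}
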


In our spectral realization the auxiliary objects are explicit and cheap: taking
$X=V_{N'}$ a cosine space of order $N'\ge N$, $b_G=a_\sigma$ on $V_{N'}$, and $T$
the zero-padding embedding $V_N\hookrightarrow V_{N'}$ makes \eqref{eq:lg-T} hold
by construction, since the shifted-Coulomb moment matrix is truncation-consistent
(the order-$N$ block is the leading principal sub-block of the order-$N'$ matrix).
The constraint \eqref{eq:lg-w} then reduces to a single symmetric
positive-definite linear solve $\widehat H_{N'}w_1=\iota v_1$ per trial vector---no
mixed flux space and no gradient-recovery operator, unlike the finite-element
realization---so Stage B costs essentially one extra solve on the Stage-A matrix.
For the isolated ground state a single trial vector ($n=1$) suffices, and the
pencil \eqref{eq:lg-AB} collapses to the scalar M\"obius evaluation used in
\Cref{sec:results}. The several-vector construction is what earns the extra
solve: it is second order in the residual, $O(\sigma^2)$, whereas the classical
shift-free Weinstein bound is only first order, $O(\sigma)$, and can fail its
isolation condition outright when the residual is large---for the cusp-limited
Gaussian basis below it does, at every basis size, whereas Lehmann--Goerisch on
the same vectors is far less sensitive to the raw residual and reaches
$\sim\!10^{-5}$. A theory-and-numerics comparison of the Weinstein, Temple, and
Lehmann--Goerisch bounds across both trial spaces is given in \SIsec{7.2}.

\subsection{The rigor contract}
A lower bound stays valid provided the quantities entering its \emph{proof} are
computed exactly, while quantities affecting only its \emph{sharpness} may be
approximate. In our pipeline the exact quantities are the matrix entries (through
exact or rigorously-bounded integrals), the coercivity constant, the certified
seed $\rho$, the positive-definiteness test, and the final generalized-eigenvalue
and M\"obius steps; the approximate quantities are the choice of trial vectors and
of the parameter $\epsilon$, and the approximate solve inside the Goerisch step,
whose residual is enclosed exactly. Section~\ref{sec:realization} makes this
boundary precise; it is the point at which double-precision computation becomes a
rigorous enclosure.

\begin{table*}[t]\centering\small
\caption{The exact/approximate contract (abridged). \textsc{Exact} = must be a
verified interval enclosure for the bound to be guaranteed; \textsc{Approx} = may
be ordinary floating point, affecting only sharpness. The full 17-row audit is
given in the companion article and the supplementary data.}
\label{tab:contract}
\begin{tabular}{lll}
\toprule
Quantity & Class & Role in the proof\\
\midrule
Shifted-Coulomb moments, matrix entries & \textsc{Exact} & define the operator\\
Coercivity constant $\Ceps=-\eta$ & \textsc{Exact} & the Stage-A shift\\
Ritz enclosures $\mu_1,\mu_2$ & \textsc{Exact} & upper bound and separation\\
Certified separator $\rho$ & \textsc{Exact} & seeds the LG bracket\\
$B\succ0$ test, $\nu_k$, M\"obius map & \textsc{Exact} & close the bound\\
Trial vectors $v_i$, parameter $\epsilon$ & \textsc{Approx} & any admissible choice\\
Goerisch auxiliary solve & \textsc{Approx} & residual enclosed exactly\\
\bottomrule
\end{tabular}
\end{table*}

\begin{figure*}[t]\centering
\includegraphics[width=0.8\textwidth]{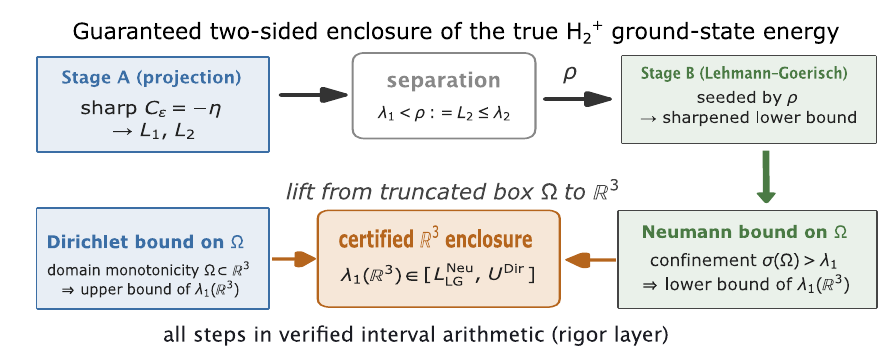}
\caption{The two-stage framework. Stage~A produces a guaranteed bound and its own
separation certificate $\lambda_1<\rho:=L_2\le\lambda_2$; Stage~B
(Lehmann--Goerisch), seeded by $\rho$, sharpens it. On a truncated box the Neumann
and Dirichlet bounds enclose the true $\R^3$ energy from below and above. The whole
pipeline runs in verified interval arithmetic.}
\label{fig:framework}
\end{figure*}

\section{Realization: rigor vs. approximation}\label{sec:realization}
\subsection{Cosine-spectral discretization}
We discretize on an anisotropic box with Neumann boundary conditions in a tensor
cosine basis, which diagonalizes the kinetic term and, through a parity
decomposition of the mode indices, splits the ground state ($\mu_1$) and the
first excited state ($\mu_2$) into separate symmetry sectors, reducing the
degrees of freedom by a large factor. Every matrix entry reduces to a
one-dimensional shifted-Coulomb moment
\begin{equation}\label{eq:moment}
G(\kappa,t)=\int_{-L}^{L}\cos\!\big(\kappa(x+L)\big)\,e^{-t^2(x-s)^2}\,dx ,
\end{equation}
assembled over a Gauss--Legendre grid in $t$ that represents the Coulomb kernel
$$1/|x-a|=(2/\sqrt\pi)\int_0^\infty e^{-t^2(x-a)^2}\,dt$$ for each nucleus $s$.

\subsection{Rigorous integrals}
The closed form of \eqref{eq:moment} involves the complex scaled complementary
error function, for which no verified interval implementation is available. We
avoid it with a two-regime enclosure that uses only interval $\exp$, $\cos$, and
sqrt. For large $t$ the infinite-domain integral is elementary,
$\int_{-\infty}^{\infty}\cos(\kappa(x{+}L))e^{-t^2(x-s)^2}dx
=\cos(\kappa(s{+}L))\,(\sqrt\pi/t)\,e^{-\kappa^2/4t^2}$, and the truncation to
$[-L,L]$ is controlled by an explicit Gaussian tail bound. For small $t$ the
integrand is broad and analytic, and a verified Gauss--Legendre rule with an
explicit Bernstein-ellipse remainder encloses it; at practical orders the
truncation term is far below the floating-point rounding floor. The moment is
thus enclosed to machine precision with no special-function verification.

\subsection{Verified linear algebra}
The assembled interval matrices are passed to verified eigenvalue routines that
return enclosures of $\mu_1$, $\mu_2$, the auxiliary $\eta$, and the
Lehmann--Goerisch quantities. The one subtlety is the Goerisch constraint. In the
continuous formulation it is an exact identity and the minimization it governs
may be approximate. In the residual-defect realization we use, the constraint is
solved \emph{approximately} in floating point and its residual is enclosed
\emph{exactly} in interval arithmetic, so the guaranteed quantity is recovered
without an exact solve. Either way, the assembled matrices, the seed $\rho$, the
positive-definiteness certificate, and the closing transform are exact; the trial
vectors, the parameter $\epsilon$, and the inner solve are approximate. A complete
tabulation of the seventeen quantities in the pipeline, each marked exact or
approximate with its justification, is given in \SIsec{8}.

\section{Results: the hydrogen molecular ion}\label{sec:results}
We apply the framework to $\mathrm H_2^+$, the operator
$-\Delta - 1/|x-a_1| - 1/|x-a_2|$ with nuclei at $a_{1,2}=(\mp2,0,0)$, discretized
in an $L^2$-orthonormal cosine basis (Neumann) or sine basis (Dirichlet). Every
quantity below is a verified interval enclosure computed in IntervalArithmetic.jl.
We report on two nested boxes, $\Omega_1=[-10,10]\times[-8,8]^2$ and
$\Omega_2=[-20,20]\times[-16,16]^2$ (each half-axis doubled). The confinement
condition of \eqref{eq:enclosure} holds on both---$\sigma(\Omega_1)=-0.2425$ and
$\sigma(\Omega_2)=-0.1240$, each above $\lambda_1\approx-0.551$---so the Neumann
lower bound is admissible as a whole-space bound on each box. \Cref{tab:roadmap} is a roadmap of the whole computation---the trial basis used at each stage and the precision it reaches---to which the remainder of this section adds the per-configuration detail.

\begin{table*}[t]\centering\footnotesize
\caption{Roadmap of the computation: the trial bases used across the two stages
and the truncation-free route, and the precision each delivers for the
$\mathrm H_2^+$ ground state ($\lambda_1\approx-0.551317$). ``Certified'' = verified
interval enclosure; ``double''/``arbitrary'' = high-precision floating point
(guaranteed only after interval promotion). Per-configuration certificates in
\Cref{tab:results}.}
\label{tab:roadmap}
\renewcommand{\arraystretch}{1.25}
\setlength{\tabcolsep}{5pt}
\begin{tabular}{>{\raggedright\arraybackslash}p{2.5cm}
                >{\raggedright\arraybackslash}p{3.0cm}
                >{\raggedright\arraybackslash}p{1.3cm}
                >{\raggedright\arraybackslash}p{3.2cm}
                >{\raggedright\arraybackslash}p{4.3cm}}
\toprule
\textbf{Stage / role} & \textbf{Trial basis} & \textbf{Domain} &
\textbf{Bound produced} & \textbf{Precision (arithmetic)}\\
\midrule
Stage A: projection lower bound $+$ separation certificate &
Tensor \emph{cosine} (Neumann); \emph{sine} (Dirichlet); parity-split &
box $\Omega$ &
Whole-space lower $L_1$; certifies $L_2>U_1$ &
Box bracket $6.3\times10^{-2}\!\to\!1.9\times10^{-2}$ ($N{=}32\!\to\!64$);
separation from $N{=}96$ (certified)\\
Upper side: Rayleigh--Ritz &
Tensor \emph{sine} (Dirichlet) &
box $\Omega$ &
Whole-space upper $U^{\mathrm{Dir}}$ &
Ritz converged; arithmetic width $\sim\!10^{-11}$ (certified)\\
Stage B: Lehmann--Goerisch sharpening &
Tensor \emph{cosine} (Neumann) $+$ trial vectors &
box $\Omega_2$ &
Sharpened lower $L^{\mathrm{Neu}}_{\mathrm{LG}}$, paired with Dirichlet upper &
\textbf{Certified $\R^3$ enclosure, width $4.76\times10^{-4}$} ($\Omega_2,N{=}64$;
headline)\\
Stage B at high order (sharpness) &
Tensor \emph{cosine} (Neumann) $+$ trial vectors &
box $\Omega$ ($\mu_1$) &
LG lower for box eigenvalue $\mu_1$ &
Box bracket $3.5\times10^{-6}\!\to\!8.5\times10^{-13}$
($N{=}192\!\to\!256$; double)\\
Truncation-free route &
\emph{Cartesian Gaussian} (global support), even-tempered &
$\R^3$ (no box) &
Rayleigh--Ritz upper $+$ Goerisch lower (two-sided) &
Two-sided width $3.3\times10^{-5}$ ($n{=}66$); hybrid w/ box lower
$1.34\times10^{-4}$ (arbitrary)\\
\bottomrule
\end{tabular}
\end{table*}

\begin{maintheorem}[Certified $\mathrm H_2^+$ bounds]
\label{res:main}
All bounds are verified enclosures. The ground-state energy
of $\mathrm H_2^+$ satisfies
\begin{equation}\label{eq:main-bracket}
-0.551444\le\lambda_1(\R^3)\le-0.550967
\end{equation}
$($width $4.76\times10^{-4}$, on $\Omega_2$ at $N=64)$, an interval that contains
the accepted reference $-0.551317$. On each truncation box the two boundary
conditions yield certified two-sided brackets for the box eigenvalues themselves:
on $\Omega_1$ at $N=48$,
\begin{equation}\label{eq:main-omega1}
\begin{aligned}
\mu_1^{\mathrm{Neu}}(\Omega_1)&\in[-0.552055,\,-0.551763],\\
\lambda_1^{\mathrm{Dir}}(\Omega_1)&\in[-0.550870,\,-0.550677].
\end{aligned}
\end{equation}
Since $\mu_1^{\mathrm{Neu}}$ is a whole-space lower bound and
$\lambda_1^{\mathrm{Dir}}$ a whole-space upper bound (Thm.~\ref{thm:enc-main}),
these certify $\lambda_1(\R^3)\in[-0.552055,-0.550677]$ on $\Omega_1$, of
width $1.38\times10^{-3}$. The irreducible part of this width---the gap between
the two brackets, $8.9\times10^{-4}$---is the domain-truncation error and the
best-possible enclosure attainable on $\Omega_1$ once the discretization is
exhausted. \emph{(Per-configuration certificates in \Cref{tab:results};
derivation and precision analysis in \SIsec{6}.)}
\end{maintheorem}

\paragraph{Certified $\R^3$ enclosure.} The $\R^3$ bracket of Main
Result~\ref{res:main}, on $\Omega_2$ at $N=64$ (\Cref{fig:enclosure}), is
certified end to end: the lower bound is a Neumann Lehmann--Goerisch bound
(certificate $B>0$, $\nu<1$, interval width $7.1\times10^{-11}$) and the upper
bound a Dirichlet Rayleigh--Ritz bound (interval width $9.6\times10^{-12}$). It
\emph{encloses the accepted reference value} $\lambda_1\approx-0.551317$, so the
enclosure is a guaranteed statement about the true molecular ion, not a
box-confined surrogate.

\paragraph{A certified separator, computed not assumed.} The sharpness of the
Neumann lower bound is governed by the separator $\rho$ with
$\mu_1(\Omega)<\rho\le\mu_2(\Omega)$. Rather than borrow a heuristic value, we
certify a tight lower bound on the second Neumann eigenvalue directly, by a
single-vector Lehmann--Goerisch computation on the symmetry sector whose ground
state \emph{is} the global second eigenvalue: this yields
$\mu_2(\Omega_2)\ge-0.3395656252$ (certificate $B=5.4\times10^{-3}>0$,
$\nu=-10.4<1$). Using this certified separator in the ground-state bound
improves the lower bound by $5.8\times10^{-4}$ over the conservative Stage-A
separator, and more than halves the enclosure width (from $1.05\times10^{-3}$ to
$4.76\times10^{-4}$)---a fully rigorous gain obtained by a sub-second
recomputation, with no re-assembly, since the Goerisch constants are independent
of $\rho$.

\paragraph{Two boxes localize the error.} \Cref{tab:results} and \Cref{fig:convergence}b show why two
domains are needed. On $\Omega_1$ the enclosure stalls at width $1.4\times10^{-3}$:
holding both a Neumann and a Dirichlet bracket reveals this floor to be the
\emph{domain-truncation} error (Dirichlet--Neumann gap $\approx1.1\times10^{-3}$,
nearly $N$-independent), not discretization (interval widths $\sim10^{-11}$).
Doubling to $\Omega_2$ collapses that error but crowds the spectral gap, which the
certified $\mu_2$ separator and $N=64$ reopen; the residual $4.76\times10^{-4}$ is
then limited by the \emph{Dirichlet} upper bound, the Neumann lower bound having
out-resolved it.

\paragraph{The price of rigor is negligible.} Across all rows the verified lower
bound lies within $\sim\!10^{-11}$ of the floating-point value from the identical
formula---about eleven guaranteed digits---and interval and floating-point Ritz
values agree to $3\times10^{-15}$ at $N=48$. Interval arithmetic does not move the
answer; it converts each number into a proof. The enclosure width is therefore set
by domain truncation and discretization, not by the cost of rigor.

\begin{table*}[t]\centering
\caption{Certified two-sided $\R^3$ enclosures of the $\mathrm H_2^+$
ground-state energy on two nested boxes. The lower bound is a Neumann
Lehmann--Goerisch bound (a whole-space lower bound under the confinement
condition $\sigma(\Omega)>\lambda_1$); the upper bound is a Dirichlet
Rayleigh--Ritz bound (a whole-space upper bound by domain monotonicity). All
entries are verified interval enclosures (IntervalArithmetic.jl). The reference
value $\lambda_1\approx-0.551317$ lies inside the $\Omega_2$, $N=64$ enclosure.}
\label{tab:results}
\begin{tabular}{llrccc}
\toprule
box & $N$ & separator $\rho$ & lower $L^{\mathrm{Neu}}_{\mathrm{LG}}$ & upper $U^{\mathrm{Dir}}$ & width\\
\midrule
$\Omega_1$ & 32 & $-0.4723$ & $-0.5534801777$ & $-0.5504637987$ & $3.02\times10^{-3}$\\
$\Omega_1$ & 48 & $-0.4059$ & $-0.5520546046$ & $-0.5506773767$ & $1.38\times10^{-3}$\\
$\Omega_2$ & 48 & $-0.5447$ & $-0.5839662573$ & $-0.5505084907$ & $3.35\times10^{-2}$\\
$\Omega_2$ & 64 & $-0.3396^\dagger$ & $-0.5514436010$ & $-0.5509672618$ & $4.76\times10^{-4}$\\
\bottomrule
\end{tabular}

\smallskip
{\footnotesize $^\dagger$ certified $\mu_2(\Omega_2)$ lower bound from a
single-vector Lehmann--Goerisch computation on the second-eigenvalue symmetry
sector; the other rows use the Stage-A sharp-$\eta$ separator.}
\end{table*}

\begin{figure*}[t]\centering
\includegraphics[width=0.92\textwidth]{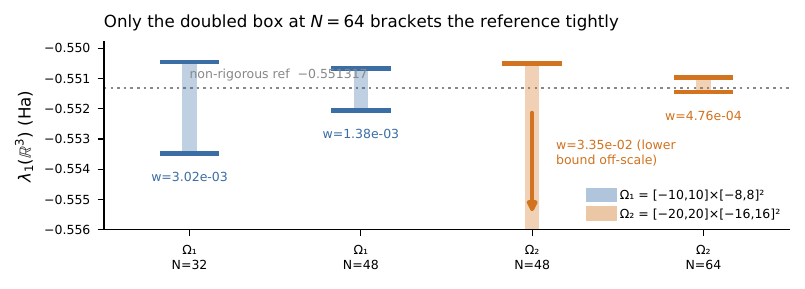}
\caption{The certified two-sided enclosures of \Cref{tab:results} (Neumann
lower to Dirichlet upper), zoomed near the ground-state energy; the $\Omega_2$,
$N=48$ lower bound lies below the window (arrow). Only $\Omega_2$ at $N=64$ tightly
straddles the reference $-0.551317$ (dotted), width $4.76\times10^{-4}$.}
\label{fig:enclosure}
\end{figure*}

\begin{figure*}[t]\centering
\includegraphics[width=0.95\textwidth]{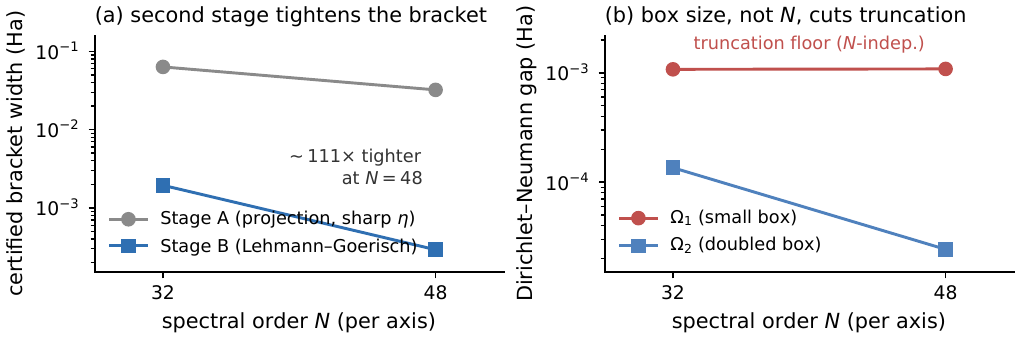}
\caption{Two convergence diagnostics at spectral orders $N=32,48$.
\textbf{(a)} Certified bracket width on $\Omega_1$: the Stage-B
Lehmann--Goerisch step tightens the Stage-A projection bracket by $\sim\!10^2$ at
fixed $N$. \textbf{(b)} Discrete Dirichlet--Neumann gap (a truncation-error proxy):
$N$-independent on $\Omega_1$ (floor $\approx1.1\times10^{-3}$), collapsing one to
two orders on the doubled box $\Omega_2$. The bound is tightened by enlarging the
box, not by refining the grid.}
\label{fig:convergence}
\end{figure*}

\paragraph{The best possible bound on a fixed domain.} Holding \emph{both} a
lower and an upper bound for \emph{each} boundary condition exposes what a given
truncation domain can and cannot deliver (\Cref{fig:twosided}). On $\Omega_1$ the
Neumann box eigenvalue $\mu_1(\Omega_1)$ and the Dirichlet box eigenvalue
$\lambda_1^{D}(\Omega_1)$ are each pinned by their own two-sided certified
bracket---widths $2.9\times10^{-4}$ and $1.9\times10^{-4}$, i.e.\ the
discretization is essentially exhausted---yet the two brackets are separated by an
irreducible gap of $8.9\times10^{-4}$. That gap is the domain-truncation error
made visible: it is the \emph{best-possible} whole-space enclosure obtainable on
$\Omega_1$ by these two boundary conditions, no matter how large $N$ grows.
Improving the bound past it requires changing the domain or the trial space, not
refining the grid.

\begin{figure*}[t]\centering
\includegraphics[width=0.95\textwidth]{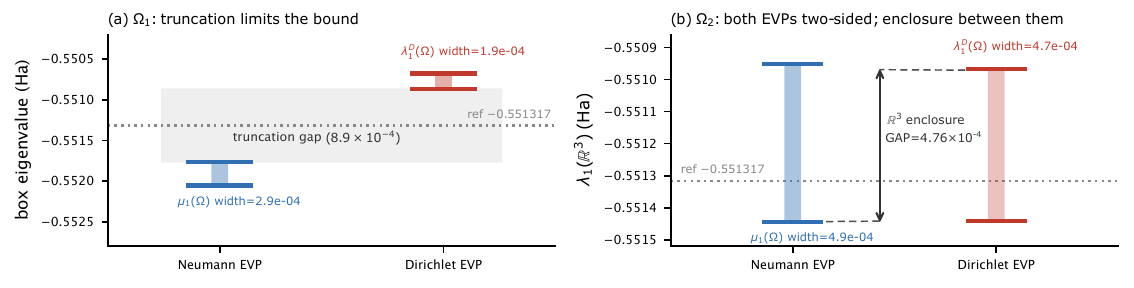}
\caption{Two-sided certified brackets for each boundary condition.
\textbf{(a)} On $\Omega_1$ the Neumann and Dirichlet box eigenvalues each carry a
tight enclosure (discretization exhausted); the shaded gap between them is the
irreducible domain-truncation error, the best-possible whole-space enclosure on
this box ($\approx8.9\times10^{-4}$). \textbf{(b)} On $\Omega_2$ at $N=64$ the
whole-space energy is enclosed between the Neumann lower and Dirichlet upper bounds
(width $4.76\times10^{-4}$); the Dirichlet upper is the rougher side (see text).}
\label{fig:twosided}
\end{figure*}

\paragraph{A truncation-free enclosure with globally supported trial functions.}
The Dirichlet upper bound is the rougher side of the enclosure precisely because
its trial functions are forced to vanish on $\partial\Omega$. Replacing the box
basis by \emph{globally supported} real Cartesian Gaussians centered on the two
nuclei---even-tempered exponents with the on-axis angular progression
$s,p_x,d_{x^2},\dots$, all matrix elements closed-form via the
McMurchie--Davidson recursion---removes the truncation entirely: the trial
functions already live on $\R^3$, so no box and no boundary condition enter. The
Rayleigh--Ritz value is then a whole-space upper bound directly, and the same
Lehmann--Goerisch construction of \Cref{thm:lg-main}, with the auxiliary matrix
$A_2$ realized through the Goerisch defect solve (never $\mathcal H^2$), supplies
the matching lower bound. Because these trial functions live on $\R^3$ with no
box, the Lehmann--Goerisch separator here must be a lower bound on the
\emph{whole-space} second eigenvalue $\lambda_2(\R^3)$---a Rayleigh--Ritz
computation on the same Gaussians yields only \emph{upper} bounds on the
$\lambda_k(\R^3)$ and can never supply it. The required $\rho\le\lambda_2(\R^3)$ is
furnished by the $k{=}2$ case of \Cref{thm:enc-main}, $\mu_2^{\mathrm{Neu}}(\Omega)
\le\lambda_2(\R^3)$, computed on the box; this is exactly the role for which the
generalized enclosure was needed. On a small polarized basis ($n=66$) this yields a
two-sided whole-space enclosure
\begin{equation}\label{eq:gaussian-bracket}
-0.551339\le\lambda_1(\R^3)\le-0.551305
\end{equation}
of width $3.3\times10^{-5}$---$14\times$ narrower than the certified box/FEM
interval of Main Result~\ref{res:main}, on a single trial space with no domain
truncation. The Gaussian upper bound alone reaches
$-0.5513092401$ ($n=132$), $44\times$ closer to the true energy than the
Dirichlet-box value; pairing it with the certified box/FEM \emph{lower} bound
gives a \emph{hybrid} certificate $[-0.5514436010,\,-0.5513092401]$ of width
$1.34\times10^{-4}$, $3.5\times$ narrower than the box-only interval and now
$94\%$ controlled by the lower bound. Two caveats fix the status of these numbers.
First, they are computed in high-precision floating point (arbitrary-precision
arithmetic), not yet in verified interval arithmetic: they are Stage-A--quality
enclosures whose promotion to a formal certificate is the same interval
implementation step already carried out for the box results, applied to the
closed-form Gaussian integrals and the Goerisch solve. Second, the pure-Gaussian
width is floored near $10^{-5}$ by the nuclear cusp---smooth Gaussians resolve
Kato's cusps only algebraically, so both the upper-bound error
($\sim\!8\times10^{-6}$) and the Goerisch variance saturate---so the route trades
the box method's truncation floor for a cusp floor. Its decisive advantage is
structural and is taken up in the Discussion: unlike the two-centre-specific
spheroidal methods that reach machine precision on $\mathrm H_2^+$, the Gaussian
construction carries \emph{unchanged} to any number of nuclei. (Setting and
convergence data in \SIsec{10}.)

\section{Discussion}
The framework rests on two ingredients that are available for a broad class of
self-adjoint operators: a variational form and a coarse spectral gap. It does not
require the potential to be nonnegative, nor an a-priori bound on the second
eigenvalue supplied from outside; Stage A manufactures the separation certificate
that Stage B consumes. The same structure applies wherever one can assemble the
form and certify a first, rough separation---other molecules, confining
potentials, and operators from spectral geometry. The cost is dominated by the
verified eigenvalue solves, and the sharp coercivity constant is what makes the
certified order practical rather than astronomical.

Two limitations frame the outlook. The verified interval computation is heavier
than double precision, which bounds the spectral order and box size we can certify;
and the enclosure width is now set by the domain-truncation error on the Dirichlet
side, not by the arithmetic. We have made that decomposition rigorous: a certified
Dirichlet \emph{lower} bound (the same Lehmann--Goerisch machinery in the $H^1_0$
setting) shows the $\Omega_1$ Dirichlet eigenvalue lies strictly above the true
energy---by at least $9.8\times10^{-5}$, with the margin \emph{rising} under
refinement---so the overestimation is a genuine property of the truncated domain,
not a grid artefact, and it collapses below $4.8\times10^{-4}$ on $\Omega_2$
(details in \SIsec{1}).
Because the Dirichlet upper bound is the rougher side (\Cref{fig:twosided}b), the
enclosure can be sharpened without enlarging the box by admitting globally
supported Gaussian trial functions, which carry no boundary constraint; this is the
truncation-free route quantified in \Cref{sec:results}. That route also removes a
structural ceiling: the near-exact $\mathrm H_2^+$ references rely on separability
in prolate-spheroidal coordinates, a two-centre device with no analog for three or
more nuclei, whereas the Gaussian product theorem keeps every integral closed-form
at any nuclear count, so Stages~A--B transfer unchanged to non-separable
multi-centre ions---proof-grade rigor where the established toolkit gives only
a-posteriori estimates. Both limitations are quantitative rather than structural,
and both point to the same future work: scaling the verified eigensolver to larger orders.

A complementary route deserves emphasis. Here Agmon's exponential-decay theory
plays only a qualitative role---it explains \emph{why} truncation converges but
supplies no computable error---so we replaced it with explicit Dirichlet/Neumann
bounds. In principle, though, the bound $|\psi(x)|\le C\,e^{-(1-\varepsilon)
\varrho(x)}$ has, in favorable cases, an explicitly computable prefactor $C$ and
Agmon distance $\varrho$; a verified upper bound on $C$ and lower bound on
$\varrho$ over $\R^3\setminus\Omega$ would bound the discarded tail energy and
convert it into a \emph{direct} $\R^3$ lower bound from a single truncated
computation, without the second box. Turning Agmon's decay into such a certified
tail estimate is, in our view, the most promising way to close the truncation gap
at its source, and we intend to pursue it.

\section*{Materials and Methods}
The discretization is a tensor Neumann cosine basis on an anisotropic box, with a
parity decomposition isolating the ground and first-excited symmetry sectors. The
Coulomb kernel is represented by a Gauss--Legendre grid in the auxiliary variable
$t$, and each shifted moment \eqref{eq:moment} is enclosed by the two-regime rule
of \Cref{sec:realization} using only interval $\exp$, $\cos$, sqrt  together
with an explicit Gaussian tail and Bernstein-ellipse remainder. Matrices are
assembled in interval arithmetic; eigenvalue enclosures, the auxiliary constant
$\eta$, and the Lehmann--Goerisch bracket are computed with verified routines.
Full proofs, the complete exact/approximate
tabulation, the verified moment algorithm, and
reproducibility details are given in the SI Appendix.

\section*{Data and code availability}
All code and data needed to reproduce every certified bound in this paper are
openly available at \url{https://github.com/xfliu/schrodinger}. The repository
contains the Stage-A (double-precision) and Stage-B (verified interval-arithmetic)
Julia pipelines and the certified result tables in machine-readable form. The
verified eigensolver used throughout, \texttt{VEIGS.jl}, is maintained as a
standalone package at \url{https://github.com/xfliu/veigs}.

\section*{Acknowledgments}
The author is supported by JSPS KAKENHI Grant Numbers 26K22276, JP24K00538, and
26H02490.


\clearpage
\renewcommand{\thepage}{S\arabic{page}}
\setcounter{page}{1}
\setcounter{section}{0}
\renewcommand{\theequation}{S\arabic{equation}}
\setcounter{equation}{0}
\renewcommand{\thefigure}{S\arabic{figure}}
\setcounter{figure}{0}
\renewcommand{\thetable}{S\arabic{table}}
\setcounter{table}{0}
\renewcommand{\thetheorem}{S\arabic{theorem}}
\setcounter{theorem}{0}
\renewcommand{\thelemma}{S\arabic{lemma}}
\setcounter{lemma}{0}
\renewcommand{\thedefinition}{S\arabic{definition}}
\setcounter{definition}{0}
\renewcommand{\theassumption}{S\arabic{assumption}}
\setcounter{assumption}{0}
\renewcommand{\theremark}{S\arabic{remark}}
\setcounter{remark}{0}
\renewcommand{\theproposition}{S\arabic{proposition}}
\setcounter{proposition}{0}
\makeatletter
\renewcommand{\theHsection}{SI.\arabic{section}}
\renewcommand{\theHequation}{SI.\arabic{equation}}
\renewcommand{\theHfigure}{SI.\arabic{figure}}
\renewcommand{\theHtable}{SI.\arabic{table}}
\renewcommand{\theHtheorem}{SI.\arabic{theorem}}
\renewcommand{\theHlemma}{SI.\arabic{lemma}}
\renewcommand{\theHdefinition}{SI.\arabic{definition}}
\renewcommand{\theHassumption}{SI.\arabic{assumption}}
\renewcommand{\theHremark}{SI.\arabic{remark}}
\renewcommand{\theHproposition}{SI.\arabic{proposition}}
\makeatother
\section*{\huge Appendix}

\medskip

\noindent This appendix (i) summarizes, with complete proofs, the projection lower bound
of the spectral-Galerkin framework [1] on which Stage~A rests---restated here
because [1] is under review---and (ii) supplies the proofs of Theorems 1--3 of
the main text together with the precision-improvement techniques that make the
certified spectral order practical. It is self-contained; equations, figures, and tables are numbered
S1, S2, \dots, and references are internal to this document. Throughout,
$\mathcal H=-\Delta+V$ with $V=-\sum_i Z_i/|x-a_i|$, $Z_{\rm tot}=\sum_i Z_i$, and
$a(u,v)=(\nabla u,\nabla v)+(Vu,v)$; $\mu_{k,N}$ denotes the $k$-th Rayleigh--Ritz
value in the cosine (Neumann) or sine (Dirichlet) trial space $V_N$ of dimension
growing with the spectral order $N$.


\section{Proof of Theorem 1 (whole-space enclosure)}\label{si:enc}
\begin{proof}[Proof of Theorem 1]
Fix $k\ge1$ and write $a_{D}(u,v)=\int_{D}(\nabla u\cdot\nabla v+Vuv)$,
$\|u\|_D^2=\int_D|u|^2$ for a region $D$. Let $\psi_1,\dots,\psi_k\in H^1(\R^3)$ be
$L^2$-orthonormal eigenfunctions for the first $k$ whole-space eigenvalues
$\lambda_1(\R^3)\le\cdots\le\lambda_k(\R^3)$.

\emph{Upper bound (Dirichlet), every $k$.} Extension by zero embeds
$H^1_0(\Omega)$ into $H^1(\R^3)$ isometrically for both the form and the $L^2$
norm: any $v\in H^1_0(\Omega)$, continued by $0$ outside $\Omega$, is an admissible
whole-space trial function with an unchanged Rayleigh quotient, so
$H^1_0(\Omega)$ is a subspace of $H^1(\R^3)$ on which the two Rayleigh quotients
agree. By the Courant--Fischer min--max characterization, restricting to a smaller
space can only raise each eigenvalue,
\begin{equation}\label{eq:dir-upper}
  \lambda_k^{\mathrm{Dir}}(\Omega)
  =\!\!\min_{\substack{S\subset H^1_0(\Omega)\\ \dim S=k}}\max_{0\ne v\in S}
    \frac{a_\Omega(v,v)}{\|v\|_\Omega^2}
  \ \ge\
  \min_{\substack{S\subset H^1(\R^3)\\ \dim S=k}}\max_{0\ne v\in S}
    \frac{a_{\R^3}(v,v)}{\|v\|_{\R^3}^2}
  =\lambda_k(\R^3),
\end{equation}
which needs no confinement hypothesis.

\emph{Lower bound (Neumann, under confinement).} Take the $k$-dimensional space of
\emph{Neumann} (unconstrained) trial functions
$S_k=\operatorname{span}\{\psi_1|_\Omega,\dots,\psi_k|_\Omega\}\subset H^1(\Omega)$.
The restriction map $T:\operatorname{span}\{\psi_1,\dots,\psi_k\}\to H^1(\Omega)$,
$Tu=u|_\Omega$, is injective, by an energy argument that uses only the confinement
hypothesis. Indeed, if $u=\sum_{j\le k}c_j\psi_j$ satisfies $u|_\Omega=0$, then $u$
is supported in the exterior $\R^3\setminus\Omega$, so discarding the nonnegative
gradient term and using $V\ge\sigma(\Omega)$ there,
\[
  \lambda_k(\R^3)\,\|u\|_{\R^3}^2\ \ge\ a_{\R^3}(u,u)
  \ \ge\ \int_{\R^3\setminus\Omega}\! V|u|^2
  \ \ge\ \sigma(\Omega)\,\|u\|_{\R^3}^2,
\]
the first inequality because $a_{\R^3}(u,u)=\sum_{j\le k}\lambda_j(\R^3)c_j^2\le
\lambda_k(\R^3)\|u\|_{\R^3}^2$. Since $\sigma(\Omega)>\lambda_k(\R^3)$ this forces
$\|u\|_{\R^3}=0$, i.e.\ $u=0$; hence $T$ is injective and $\dim S_k=k$. By the
min--max principle for the Neumann problem,
\begin{equation}\label{eq:neu-trial}
  \mu_k^{\mathrm{Neu}}(\Omega)\ \le\ \max_{0\ne v\in S_k}
    \frac{a_\Omega(v,v)}{\|v\|_\Omega^2}.
\end{equation}
Fix $v=\sum_{j\le k}c_j\psi_j$ and abbreviate its exterior contributions
\[
  I_{\rm ext}=\!\!\int_{\R^3\setminus\Omega}\!\!\!\bigl(|\nabla v|^2+V|v|^2\bigr),
  \qquad
  m_{\rm ext}=\!\!\int_{\R^3\setminus\Omega}\!\!\!|v|^2\ \ge 0 .
\]
By $L^2$-orthonormality and $a_{\R^3}(\psi_i,\psi_j)=\lambda_i\delta_{ij}$,
$a_{\R^3}(v,v)=\sum_j\lambda_j(\R^3)c_j^2\le\lambda_k(\R^3)\|v\|_{\R^3}^2$. Writing
$a_\Omega(v,v)=a_{\R^3}(v,v)-I_{\rm ext}$ and
$\|v\|_\Omega^2=\|v\|_{\R^3}^2-m_{\rm ext}>0$, the quotient in \eqref{eq:neu-trial}
is $\le\lambda_k(\R^3)$ iff
\begin{equation}\label{eq:conf-cond}
  a_{\R^3}(v,v)-\lambda_k(\R^3)\|v\|_{\R^3}^2
  \ \le\ I_{\rm ext}-\lambda_k(\R^3)\,m_{\rm ext}.
\end{equation}
The left side is $\le0$ by the eigenvalue ordering. For the right side, discard
the nonnegative gradient term in $I_{\rm ext}$ and use
$V(x)\ge\sigma(\Omega):=\inf_{x\notin\Omega}V(x)$ on $\R^3\setminus\Omega$,
\[
  I_{\rm ext}-\lambda_k(\R^3)\,m_{\rm ext}
  \ \ge\ \bigl(\sigma(\Omega)-\lambda_k(\R^3)\bigr)\,m_{\rm ext}\ \ge\ 0
\]
by the confinement hypothesis $\sigma(\Omega)>\lambda_k(\R^3)$ and
$m_{\rm ext}\ge0$. Thus \eqref{eq:conf-cond} holds for every $v\in S_k$, so the
maximum in \eqref{eq:neu-trial} is $\le\lambda_k(\R^3)$ and
$\mu_k^{\mathrm{Neu}}(\Omega)\le\lambda_k(\R^3)$.

\emph{Conclusion.} Combining \eqref{eq:dir-upper} and the Neumann bound,
\begin{equation}\label{eq:enc-final}
  \mu_k^{\mathrm{Neu}}(\Omega)\ \le\ \lambda_k(\R^3)\ \le\
  \lambda_k^{\mathrm{Dir}}(\Omega),
\end{equation}
so any certified $L^{\mathrm{Neu}}\le\mu_k^{\mathrm{Neu}}(\Omega)$ (from Stage~B
with Neumann conditions) and $U^{\mathrm{Dir}}\ge\lambda_k^{\mathrm{Dir}}(\Omega)$
(a Dirichlet Rayleigh--Ritz value) bracket the $k$-th whole-space eigenvalue,
$\lambda_k(\R^3)\in[\,L^{\mathrm{Neu}},\,U^{\mathrm{Dir}}\,]$. The ground state is
$k=1$, where $S_1$ is spanned by the ground state alone, recovering the
single-trial-function argument; the lower bound needs no simplicity or
spectral-gap hypothesis, only that the exterior potential floor \emph{strictly}
clears the target level, $\sigma(\Omega)>\lambda_k(\R^3)$. Neither inequality in
\eqref{eq:enc-final} loses a multiplicative constant; this factor-free structure
is what keeps the enclosure sharp.
\end{proof}

\section{The projection lower bound of the spectral-Galerkin framework}\label{si:proj}
Stage~A rests on the projection (Rayleigh--Ritz complementary) lower bound of the
author's spectral-Galerkin framework [1], currently under review. Because that
reference is not yet published, we restate here the part on which the present
paper depends---the abstract lower-bound theorem and the sharp spectral
constant---and give complete proofs, so that this appendix is self-contained.

Throughout this section $\widehat a$ and $\widehat b$ are two symmetric,
positive-definite bilinear forms on a Hilbert space $\widehat V$ (for us
$\widehat a=a_\sigma$ the shifted energy form and $\widehat b=(\cdot,\cdot)$ the
$L^2$ inner product), with induced norms $\snorm{\cdot}{\widehat a}$,
$\snorm{\cdot}{\widehat b}$. The generalized eigenproblem
$\widehat a(u,v)=\lambda\,\widehat b(u,v)$ has eigenpairs $(\lambda_j,u_j)$.

\begin{assumption}\label{as:abstract}\leavevmode
\begin{enumerate}
\item[\textup{(A1)}] There are exact eigenpairs
$(\lambda_j,u_j)$, $j\ge1$, with $0<\lambda_1\le\lambda_2\le\cdots$ and the Gram
relations $\widehat b(u_i,u_j)=\delta_{ij}$, $\widehat a(u_i,u_j)=\lambda_j
\delta_{ij}$.
\item[\textup{(A2)}] $\widehat V_N\subset\widehat V$ is finite-dimensional, with
$\widehat a,\widehat b$ positive definite on it; the \emph{discrete eigenvalues}
$0<\lambda_{1,N}\le\cdots\le\lambda_{M,N}$ ($M=\dim\widehat V_N$) solve the matrix
eigenproblem for $(\widehat a,\widehat b)$ restricted to $\widehat V_N$.
\item[\textup{(A3)}] For each $k\le M$ there is a linear map
$\Pi_N:E_k\to\widehat V_N$ on $E_k:=\operatorname{span}\{u_1,\dots,u_k\}$ with the
\emph{Pythagoras property}
$\widehat a(\Pi_N\phi,\phi-\Pi_N\phi)=0$ and the \emph{projection-error estimate}
$\snorm{\phi-\Pi_N\phi}{\widehat b}\le C_N\snorm{\phi-\Pi_N\phi}{\widehat a}$ for
all $\phi\in E_k$.
\end{enumerate}
\end{assumption}
In the conforming case $\Pi_N$ is the $\widehat a$-orthogonal (Ritz) projection and
(A3) holds automatically; only the Gram relations (A1) are used below.

\begin{theorem}[Projection-based lower bound; {\rm Liu \textup{[1]}, cf.\ Liu 2015 \textup{[6]}}]
\label{thm:si-liu}
Under \Cref{as:abstract}, for each $k=1,\dots,M$,
\begin{equation}\label{eq:si-liu}
  \lambda_k\ \ge\ \frac{\lambda_{k,N}}{1+C_N^2\,\lambda_{k,N}} .
\end{equation}
\end{theorem}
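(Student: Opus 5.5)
The plan is the standard Liu–Oishi argument: bound the discrete eigenvalue $\lambda_{k,N}$ from above through the min--max principle applied to the $k$-dimensional subspace $\Pi_N E_k\subset\widehat V_N$. Rearranging then turns that upper bound into the claimed lower bound on $\lambda_k$.

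\emph{Step 1 (reduction).} Since $\lambda_{k,N}\mapsto \lambda_{k,N}/(1+C_N^2\lambda_{k,N})$ is increasing, \eqref{eq:si-liu} is equivalent to
\[
\lambda_{k,N}\le \frac{\lambda_k}{1-C_N^2\lambda_k}\quad\text{whenever } C_N^2\lambda_k<1 .
\]
If $C_N^2\lambda_k\ge 1$, the right-hand side of \eqref{eq:si-liu} is below $1/C_N^2\le\lambda_k$, so the bound holds trivially. We may therefore assume $C_N^2\lambda_k<1$.

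\emph{Step 2 (key estimate).} Fix $0\ne\phi=\sum_{j\le k}c_ju_j\in E_k$. The Gram relations give $\snorm{\phi}{\widehat a}^2\le\lambda_k\snorm{\phi}{\widehat b}^2$. The Pythagoras property gives
\[
\snorm{\phi}{\widehat a}^2=\snorm{\Pi_N\phi}{\widehat a}^2+\snorm{\phi-\Pi_N\phi}{\widehat a}^2 ,
\]
so $\snorm{\Pi_N\phi}{\widehat a}\le\snorm{\phi}{\widehat a}$. Combining the projection-error estimate with the triangle inequality yields
\[
\snorm{\Pi_N\phi}{\widehat b}\ge\snorm{\phi}{\widehat b}-C_N\snorm{\phi-\Pi_N\phi}{\widehat a}.
\]
I would instead work with squares to avoid square roots. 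Set $t=\snorm{\phi-\Pi_N\phi}{\widehat a}^2$, so that $\snorm{\Pi_N\phi}{\widehat a}^2=\snorm{\phi}{\widehat a}^2-t$. The aim is
\[
\frac{\snorm{\Pi_N\phi}{\widehat a}^2}{\snorm{\Pi_N\phi}{\widehat b}^2}\le\frac{\lambda_k}{1-C_N^2\lambda_k}.
\]
This is the main obstacle, because a naive triangle inequality loses constants. What I would try first is the sharper identity
\[
\snorm{\phi}{\widehat b}^2=\snorm{\Pi_N\phi}{\widehat b}^2+2\widehat b(\Pi_N\phi,\phi-\Pi_N\phi)+\snorm{\phi-\Pi_N\phi}{\widehat b}^2 .
\]
I would then control the cross term via Cauchy--Schwarz together with a weighted Young inequality, choosing the weight so that the bad term is absorbed by the $-t$ saving in the numerator. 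Concretely, write $x=\snorm{\Pi_N\phi}{\widehat b}$, $y=\snorm{\phi-\Pi_N\phi}{\widehat b}\le C_N\sqrt t$, and $A=\snorm{\phi}{\widehat a}^2\le\lambda_k(x+y)^2$. The target $(A-t)(1-C_N^2\lambda_k)\le\lambda_k x^2$ should then reduce to a quadratic inequality in $(x,\sqrt t)$ whose discriminant is nonpositive. I would verify this by optimizing over $t$, and I expect it to go through exactly with the constant $C_N^2$.

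\emph{Step 3 (dimension and min--max).} From Step 2 with $C_N^2\lambda_k<1$, $\Pi_N\phi=0$ would force $\phi=0$. Hence $\Pi_N$ is injective on $E_k$ and $\dim\Pi_NE_k=k$. Courant--Fischer on $\widehat V_N$ then gives
\[
\lambda_{k,N}\le\max_{\phi\in E_k}\frac{\snorm{\Pi_N\phi}{\widehat a}^2}{\snorm{\Pi_N\phi}{\widehat b}^2}\le\frac{\lambda_k}{1-C_N^2\lambda_k},
\]
and rearranging gives \eqref{eq:si-liu}.
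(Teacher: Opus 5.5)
Your proposal is correct and is essentially the paper's argument. Both use Courant--Fischer on the $k$-dimensional test space $\Pi_N E_k$, combined with the triangle inequality in $\snorm{\cdot}{\widehat b}$, the projection-error estimate and Pythagoras; you phrase it for the pencil $(\widehat a,\widehat b)$ with a preliminary split on $C_N^2\lambda_k\ge 1$, whereas the paper uses the reciprocal quotient $\widehat R$ and splits on injectivity of $\Pi_N$.

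Your Step~2 does close as you hoped: inserting $\snorm{\phi}{\widehat a}^2\le\lambda_k(x+C_N\sqrt t)^2$ turns the target into $\bigl(C_N\lambda_k x-(1-C_N^2\lambda_k)\sqrt t\bigr)^2\ge 0$, which is exactly the paper's Cauchy--Schwarz step in $\R^2$. So the plain triangle inequality loses no constant and no Young weighting is needed; injectivity also follows from your displayed triangle bound with $\Pi_N\phi=0$, which gives $(1-C_N\sqrt{\lambda_k})\snorm{\phi}{\widehat b}\le 0$.
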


\begin{proof}
Write $\kappa_j=1/\lambda_j$, $\kappa_{j,N}=1/\lambda_{j,N}$ for the reciprocal
eigenvalues; after inversion, \eqref{eq:si-liu} is equivalent to
$\kappa_k\le\kappa_{k,N}+C_N^2$. Introduce the reciprocal Rayleigh quotient
$\widehat R(v):=\snorm{v}{\widehat b}^2/\snorm{v}{\widehat a}^2$ (for
$\snorm{v}{\widehat a}>0$). By the Gram relations (A1), any
$\phi=\sum_{j\le k}c_ju_j\in E_k$ has $\snorm{\phi}{\widehat b}^2=\sum c_j^2$ and
$\snorm{\phi}{\widehat a}^2=\sum\lambda_jc_j^2$, so
\begin{equation}\label{eq:si-exact-minmax}
  \min_{0\ne\phi\in E_k}\widehat R(\phi)=\kappa_k ,
\end{equation}
while the max--min principle for the pencil $(\widehat b,\widehat a)$ on
$\widehat V_N$ gives
$\kappa_{k,N}=\max_{\dim S=k}\min_{0\ne v\in S}\widehat R(v)$.

\emph{Case 1: $\Pi_N$ is not injective on $E_k$.} Choose $0\ne\phi\in E_k$ with
$\Pi_N\phi=0$. The projection-error estimate (A3) gives
$\snorm{\phi}{\widehat b}\le C_N\snorm{\phi}{\widehat a}$, i.e.\
$\widehat R(\phi)\le C_N^2$, so by \eqref{eq:si-exact-minmax}
$\kappa_k\le C_N^2\le\kappa_{k,N}+C_N^2$.

\emph{Case 2: $\Pi_N$ is injective on $E_k$.} Then $S:=\Pi_N(E_k)$ has dimension
$k$, so the discrete max--min principle yields $\phi^*\in E_k\setminus\{0\}$ with
$\widehat R(\Pi_N\phi^*)\le\kappa_{k,N}$, i.e.\
$\snorm{\Pi_N\phi^*}{\widehat b}\le\sqrt{\kappa_{k,N}}\,
\snorm{\Pi_N\phi^*}{\widehat a}$. By the triangle inequality in
$\snorm{\cdot}{\widehat b}$, then (A3), then Cauchy--Schwarz in $\R^2$,
\[
  \snorm{\phi^*}{\widehat b}
  \le\snorm{\Pi_N\phi^*}{\widehat b}+\snorm{\phi^*-\Pi_N\phi^*}{\widehat b}
  \le\sqrt{\kappa_{k,N}}\,\snorm{\Pi_N\phi^*}{\widehat a}
    +C_N\,\snorm{\phi^*-\Pi_N\phi^*}{\widehat a}
\]
\[
  \le\sqrt{\kappa_{k,N}+C_N^2}\,
    \Bigl(\snorm{\Pi_N\phi^*}{\widehat a}^2
    +\snorm{\phi^*-\Pi_N\phi^*}{\widehat a}^2\Bigr)^{1/2}
  =\sqrt{\kappa_{k,N}+C_N^2}\,\snorm{\phi^*}{\widehat a},
\]
the last equality being the Pythagoras property (A3). Hence
$\kappa_k\le\widehat R(\phi^*)\le\kappa_{k,N}+C_N^2$ by
\eqref{eq:si-exact-minmax}. In both cases $\kappa_k\le\kappa_{k,N}+C_N^2$, which is
\eqref{eq:si-liu}.
\end{proof}

The bound is unconditional: unlike Temple--Kato or Lehmann--Goerisch it needs no
lower bound on $\lambda_{k+1}$ and no spectral localization. Its quality is
governed by the dimensionless product $C_N^2\lambda_{k,N}$---useful precisely when
$C_N^2\lambda_{k,N}\ll1$---so everything rests on making $C_N$ explicit and small.
For a trial space spanned by exact Laplacian eigenmodes this constant is not only
explicit but optimal.

\begin{lemma}[Sharp spectral constant]\label{lem:si-CN}
Let $\widehat V_N$ be spanned by the eigenmodes with index in a retained set
$\Lambda_N$, let $\Pi_N$ be the modal ($\widehat a$-orthogonal) truncation, and let
$\lambda_*:=\min\{\lambda_j:j\notin\Lambda_N\}$ be the first omitted eigenvalue.
Then $C_N=\lambda_*^{-1/2}$ is admissible in \textup{(A3)}, and it is optimal
\textup(the smallest constant for which the projection-error estimate holds\textup).
\end{lemma}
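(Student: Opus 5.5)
The plan is to expand the projection error in the eigenbasis of the pencil $(\widehat a,\widehat b)$ and compare the two norms mode by mode. Read in context, the lemma concerns a trial space spanned by exact eigenmodes $\{u_j\}_{j\in\Lambda_N}$ of the pencil, with Gram relations $\widehat b(u_i,u_j)=\delta_{ij}$ and $\widehat a(u_i,u_j)=\lambda_j\delta_{ij}$ as in (A1). For the cosine space this is the Laplacian (Neumann) pencil, from which the main-text constant $\widehat C_N$ is obtained afterwards. The modal truncation is $\Pi_N v=\sum_{j\in\Lambda_N}c_ju_j$, where $v=\sum_j c_ju_j$ is the expansion of $v\in\widehat V$. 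Completeness of $\{u_j\}$ in $\widehat V$ gives the Parseval identities $\snorm{v}{\widehat b}^2=\sum_j c_j^2$ and $\snorm{v}{\widehat a}^2=\sum_j\lambda_jc_j^2$.

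\emph{Step 1: the projection is $\widehat a$-orthogonal.} I first check that $\Pi_N$ coincides with the $\widehat a$-orthogonal projection onto $\widehat V_N$. For $j\in\Lambda_N$,
\[
\widehat a\bigl(v-\Pi_Nv,\,u_j\bigr)=\sum_{i\notin\Lambda_N}c_i\,\widehat a(u_i,u_j)=0 .
\]
This gives the Pythagoras property in (A3).

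\emph{Step 2: the error estimate.} Write $e=v-\Pi_Nv=\sum_{j\notin\Lambda_N}c_ju_j$. Since $\lambda_j\ge\lambda_*$ for every $j\notin\Lambda_N$,
\[
\snorm{e}{\widehat b}^2=\sum_{j\notin\Lambda_N}c_j^2\le\lambda_*^{-1}\sum_{j\notin\Lambda_N}\lambda_jc_j^2=\lambda_*^{-1}\snorm{e}{\widehat a}^2 .
\]
So $C_N=\lambda_*^{-1/2}$ is admissible. In fact this holds on all of $\widehat V$, not only on $E_k$.

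\emph{Step 3: optimality.} Take $j_*\notin\Lambda_N$ with $\lambda_{j_*}=\lambda_*$; the minimum defining $\lambda_*$ is attained because the spectrum is discrete and increasing. Then $\Pi_Nu_{j_*}=0$, so the error equals $u_{j_*}$ itself, and
\[
\snorm{u_{j_*}}{\widehat b}=\lambda_*^{-1/2}\snorm{u_{j_*}}{\widehat a}.
\]
Hence no constant smaller than $\lambda_*^{-1/2}$ can satisfy the estimate.

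The main obstacle is justifying the expansion rigorously. I need the $u_j$ to be complete in $\widehat V$ with respect to the $\widehat a$-norm. This follows from compactness of the embedding, i.e.\ a compact resolvent, which holds for the Neumann Laplacian on a box. I also need the modal truncation to make sense on the energy space and not merely on $L^2$. Both points are standard: the $\widehat a$-normalized modes $u_j/\sqrt{\lambda_j}$ form an orthonormal basis of $(\widehat V,\widehat a)$, so all the series above converge in both norms. One interpretive point needs care. If $\widehat a$ were instead the shifted Coulomb form, the cosine modes would not diagonalize it, and the lemma would not apply directly. That case is handled downstream by the factor $(1+g^2)/(1-\epsilon)$ in $\widehat C_N$, so I would state the lemma strictly for the pencil whose eigenmodes span $\widehat V_N$.
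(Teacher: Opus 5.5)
Your proof is correct and is essentially the paper's argument: expand in the simultaneously $\widehat a$- and $\widehat b$-orthogonal eigenbasis, bound the omitted-mode tail using $\lambda_j\ge\lambda_*$, and attain equality at $v=u_{j_*}$. Your additional checks (the Pythagoras property, and completeness of the modes in the energy norm so that the expansions converge) spell out what the paper takes for granted when it invokes Parseval in both norms.
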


\begin{proof}
Expand $v$ in the $\widehat b$-orthonormal basis (A1): $v=\sum_j c_ju_j$ with
$c_j=\widehat b(v,u_j)$. Because the $u_j$ are simultaneously $\widehat b$- and
$\widehat a$-orthogonal, the modal truncation is
$\Pi_N v=\sum_{j\in\Lambda_N}c_ju_j$, so
$(I-\Pi_N)v=\sum_{j\notin\Lambda_N}c_ju_j$. Using
$\snorm{u_j}{\widehat b}^2=1$ and $\snorm{u_j}{\widehat a}^2=\lambda_j$ from the
Gram relations, Parseval in both norms gives
\[
  \snorm{(I-\Pi_N)v}{\widehat b}^2
  =\sum_{j\notin\Lambda_N}c_j^2
  \le\frac1{\lambda_*}\sum_{j\notin\Lambda_N}\lambda_j\,c_j^2
  =\frac1{\lambda_*}\snorm{(I-\Pi_N)v}{\widehat a}^2 ,
\]
since $\lambda_j\ge\lambda_*$ for every omitted index $j\notin\Lambda_N$. Taking
$v=u_{j_*}$ with $\lambda_{j_*}=\lambda_*$ ($j_*\notin\Lambda_N$) gives
$\Pi_N v=0$ and equality, so the constant is optimal.
\end{proof}

For the cosine trial space the omitted modes have Laplacian eigenvalue at least
$\nu_*=\big((N+1)\pi/(2L_x)\big)^2$, so $\lambda_*=\nu_*$ and the sharp constant is
$C_N=\nu_*^{-1/2}=O(N^{-1})$; this is the value used in \Cref{si:proj-proof} once
the coercivity shift has restored positivity. For the Coulomb operator the form is
\emph{not} positive definite, and \Cref{si:hardy}--\Cref{si:proj-proof} restore
positivity by a coercivity shift and propagate the constant through the shifted
form.

\section{The projection-error constant via the potential form factor}\label{si:formfactor}
The abstract bound \eqref{eq:si-liu} is only as good as the projection constant
$C_N$. For the Schr\"odinger form on the cosine space, [1] makes $C_N$ explicit
through a single scalar---the \emph{potential form factor}---and this is the
mechanism Stage~A uses. We summarize it here for a positive-definite form (the
case to which the coercivity shift of \Cref{si:hardy} reduces the Coulomb
operator) and give the short proofs.

Throughout, $a(u,v)=(\nabla u,\nabla v)+(Vu,v)$ with $V\ge0$, $\|v\|_a^2=a(v,v)$,
and $V_N$ is the cosine trial space with omitted-mode floor
$\nu_*=\min_{\bm m\notin\Lambda_N}\nu_{\bm m}$. The standard Galerkin matrix is
$K+P$ ($K$ diagonal kinetic, $P_{\bm i\bm j}=(V\varphi_{\bm i},\varphi_{\bm j})$),
whose eigenvalues $\lambda_{k,N}\ge\mu_k$ are the Rayleigh--Ritz upper bounds. Two
distinct projections onto $V_N$ enter:
\begin{itemize}\itemsep2pt
\item the \emph{modal} projection $\Pi_N^0$, the $L^2$-orthogonal truncation of the
cosine expansion; it is also $a_0$-orthogonal ($a_0=(\nabla\cdot,\nabla\cdot)$) and
obeys the exact spectral-gap estimate
\begin{equation}\label{eq:si-modaltail}
  \|u-\Pi_N^0u\|_{L^2}\le\nu_*^{-1/2}\,\|\nabla(u-\Pi_N^0u)\|_{L^2},
\end{equation}
since $u-\Pi_N^0u$ contains only modes with $\nu_{\bm m}\ge\nu_*$;
\item the \emph{Ritz} projection $\Pi_N$ of the full form,
$a(u-\Pi_Nu,v_N)=0$ for all $v_N\in V_N$---the projection required by
\eqref{eq:si-liu}, but for which no modal tail estimate is directly available
because the cosines are not eigenfunctions of $-\Delta+V$.
\end{itemize}
The entire difficulty is to control the \emph{projection gap}
$e:=\Pi_N^0u-\Pi_Nu\in V_N$, which an algebraic identity transfers to data seen by
the \emph{modal} projection.

\begin{lemma}[Gap identity]\label{lem:si-gapid}
For every $u\in\mathcal V$ and $v_N\in V_N$, with $e=\Pi_N^0u-\Pi_Nu$,
\begin{equation}\label{eq:si-gapid}
  a(e,v_N)=\bigl(V(\Pi_N^0u-u),\,v_N\bigr)_{L^2}.
\end{equation}
\end{lemma}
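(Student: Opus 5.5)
The identity is a direct consequence of the two defining orthogonality relations, so the plan is simply to expand $a(e,v_N)$ by linearity and use each relation once. Fix $u\in\mathcal V$ and $v_N\in V_N$, and split
\[
a(e,v_N)=a(\Pi_N^0u-u,v_N)+a(u-\Pi_Nu,v_N).
\]
The second term vanishes by the definition of the Ritz projection, $a(u-\Pi_Nu,v_N)=0$ for all $v_N\in V_N$. This leaves $a(\Pi_N^0u-u,v_N)$.

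For that remaining term, separate the kinetic and potential parts of the form:
\[
a(\Pi_N^0u-u,v_N)=(\nabla(\Pi_N^0u-u),\nabla v_N)+(V(\Pi_N^0u-u),v_N).
\]
The kinetic part is zero because the modal projection is $a_0$-orthogonal onto $V_N$. Concretely, $u-\Pi_N^0u$ contains only cosine modes $\varphi_{\bm m}$ with $\bm m\notin\Lambda_N$. Each such mode is a Neumann Laplacian eigenfunction, so by Green's formula (with vanishing Neumann boundary term) $(\nabla\varphi_{\bm m},\nabla\varphi_{\bm j})=\nu_{\bm m}(\varphi_{\bm m},\varphi_{\bm j})=0$ whenever $\bm j\in\Lambda_N$. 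What survives is exactly the right-hand side of \eqref{eq:si-gapid}.

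The only point needing care is justifying these manipulations for $u$ merely in the form domain $\mathcal V\subset H^1(\Omega)$, rather than termwise on a finite sum.

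\begin{itemize}
\item The cosine expansion converges in $H^1(\Omega)$: its coefficients are weighted by $1+\nu_{\bm m}$, the cosines form an orthogonal basis of $H^1$ for the Neumann box, and so $\Pi_N^0$ is a bounded $H^1$-orthogonal truncation. The $a_0$-orthogonality of the tail to $V_N$ then follows by continuity from the modewise orthogonality.
\item The potential pairing $(V(\Pi_N^0u-u),v_N)$ is finite. For $V\ge0$ in the form domain, Cauchy--Schwarz gives $|(Vw,v_N)|\le(Vw,w)^{1/2}(Vv_N,v_N)^{1/2}$. In the Coulomb case one instead uses Hardy's inequality, which makes the pairing finite for any $H^1$ functions.
\end{itemize}

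Once the pairing is well defined and the kinetic orthogonality is established in $H^1$, the identity is purely algebraic.
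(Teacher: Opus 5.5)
Your proof is correct and follows the paper's argument: split $a(e,v_N)=a(\Pi_N^0u-u,v_N)+a(u-\Pi_Nu,v_N)$, drop the second term by the definition of the Ritz projection, and drop the gradient part of the first term by the $a_0$-orthogonality of the modal truncation. Your extra justifications are sound and only make explicit what the paper takes for granted: modewise orthogonality extended to all of $u$ by $H^1$-convergence of the cosine expansion, and finiteness of the potential pairing.
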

\begin{proof}
$a(e,v_N)=a(\Pi_N^0u-u,v_N)+a(u-\Pi_Nu,v_N)$; the second term vanishes by the
definition of $\Pi_N$, and in the first the gradient part
$a_0(\Pi_N^0u-u,v_N)=0$ by $a_0$-orthogonality of the modal projection, leaving the
potential part \eqref{eq:si-gapid}.
\end{proof}

\begin{definition}[Potential form factor]\label{def:si-etaV}
$\eta_V\ge0$ is any constant with
$|(Vw,v_N)|\le\eta_V\|w\|_{L^2}\|v_N\|_a$ for all $w\in L^2(\Omega)$, $v_N\in V_N$.
\end{definition}

\begin{lemma}[Projection gap]\label{lem:si-projgap}
With $\lambda_{1,N}>0$ the smallest Galerkin eigenvalue and
$g:=\lambda_{1,N}^{-1/2}\eta_V$,
\begin{equation}\label{eq:si-projgap}
  \|\Pi_N^0u-\Pi_Nu\|_{L^2}\le g\,\nu_*^{-1/2}\,\|\nabla(u-\Pi_N^0u)\|_{L^2}.
\end{equation}
\end{lemma}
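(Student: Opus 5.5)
The plan is to test the gap identity of \Cref{lem:si-gapid} against the gap itself, and then convert the resulting energy-norm bound into an $L^2$ bound through the smallest Galerkin eigenvalue. Since $e=\Pi_N^0u-\Pi_Nu$ lies in $V_N$, it is an admissible test function. Taking $v_N=e$ in \eqref{eq:si-gapid} gives
\[
\|e\|_a^2=a(e,e)=\bigl(V(\Pi_N^0u-u),\,e\bigr)_{L^2}.
\]

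The first step is to bound the right-hand side with the potential form factor of Definition~\ref{def:si-etaV}. We take $w=\Pi_N^0u-u\in L^2(\Omega)$ and $v_N=e$. This yields $\|e\|_a^2\le\eta_V\|u-\Pi_N^0u\|_{L^2}\|e\|_a$. If $e=0$ there is nothing to prove. Otherwise $\|e\|_a>0$, because $a$ is positive definite on $V_N$ (here $V\ge0$; in the shifted setting one works with $a_\sigma$). Dividing through gives
\[
\|e\|_a\le\eta_V\,\|u-\Pi_N^0u\|_{L^2}.
\]

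The second step passes from the energy norm to $L^2$ on $V_N$. By the discrete Rayleigh--Ritz characterization, every $v_N\in V_N$ satisfies $a(v_N,v_N)\ge\lambda_{1,N}\|v_N\|_{L^2}^2$, since $\lambda_{1,N}$ is the minimum of the Rayleigh quotient over $V_N$. Applying this to $e$ gives
\[
\|e\|_{L^2}\le\lambda_{1,N}^{-1/2}\|e\|_a\le\lambda_{1,N}^{-1/2}\eta_V\|u-\Pi_N^0u\|_{L^2}=g\,\|u-\Pi_N^0u\|_{L^2}.
\]

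The third step applies the modal tail estimate \eqref{eq:si-modaltail}, $\|u-\Pi_N^0u\|_{L^2}\le\nu_*^{-1/2}\|\nabla(u-\Pi_N^0u)\|_{L^2}$. Substituting it into the previous bound gives \eqref{eq:si-projgap}.

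I do not expect any of these steps to be a real obstacle. The one point to check carefully is that the test function $e$ is admissible in both the gap identity and Definition~\ref{def:si-etaV}, which only requires $e\in V_N$. One should also confirm that $\lambda_{1,N}$ is the Galerkin eigenvalue of the same form $a$ used in $\|\cdot\|_a$, so that the Rayleigh-quotient inequality is consistent with the norm appearing in the definition of $\eta_V$.
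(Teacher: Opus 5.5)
Your proposal is correct and follows the paper's proof step for step. Both test the gap identity with $v_N=e$, bound the right side via the form factor with $w=\Pi_N^0u-u$, pass to $L^2$ through the discrete Rayleigh bound $\|e\|_{L^2}\le\lambda_{1,N}^{-1/2}\|e\|_a$, and finish with the modal-tail estimate; your explicit handling of the case $e=0$ is only a minor added detail.
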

\begin{proof}
Test \eqref{eq:si-gapid} with $v_N=e$ and use \Cref{def:si-etaV} with
$w=\Pi_N^0u-u$: $\|e\|_a^2\le\eta_V\|u-\Pi_N^0u\|_{L^2}\|e\|_a$, so
$\|e\|_a\le\eta_V\|u-\Pi_N^0u\|_{L^2}$. The discrete Rayleigh bound
$\|e\|_{L^2}\le\lambda_{1,N}^{-1/2}\|e\|_a$ and the modal-tail estimate
\eqref{eq:si-modaltail} give \eqref{eq:si-projgap}.
\end{proof}

\begin{theorem}[Projection-error constant for $-\Delta+V$, {\rm Liu \textup{[1]}}]
\label{thm:si-CNV}
Let $V\ge0$. Then for all $u\in\mathcal V$,
\begin{equation}\label{eq:si-CNV}
  \|(I-\Pi_N)u\|_{L^2}\le C_N^{(V)}\|(I-\Pi_N)u\|_a,
  \qquad C_N^{(V)}=\sqrt{\frac{1+g^2}{\nu_*}},
\end{equation}
and consequently $\mu_k\ge\lambda_{k,N}/\bigl(1+(C_N^{(V)})^2\lambda_{k,N}\bigr)$ for
$k=1,\dots,\dim V_N$.
\end{theorem}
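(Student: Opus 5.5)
The plan is to split the projection error $u-\Pi_Nu$ through the modal projection, $u-\Pi_Nu=(u-\Pi_N^0u)+e$ with $e=\Pi_N^0u-\Pi_Nu\in V_N$. The triangle inequality in $L^2$, combined with \eqref{eq:si-modaltail} and \Cref{lem:si-projgap}, gives
\[
  \|(I-\Pi_N)u\|_{L^2}\le \|u-\Pi_N^0u\|_{L^2}+\|e\|_{L^2}
  \le \nu_*^{-1/2}(1+g)\,\|\nabla(u-\Pi_N^0u)\|_{L^2}.
\]
This is not yet the claimed constant: it carries $(1+g)$ instead of $\sqrt{1+g^2}$, and the right-hand side involves the modal error rather than the Ritz error $\|(I-\Pi_N)u\|_a$. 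Both defects must be removed.

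For the norm on the right, I will use that $\Pi_N^0u\in V_N$. Since $V\ge0$, we have $\|\nabla w\|_{L^2}\le\|w\|_a$. The $a_0$-orthogonality of the modal tail then gives $\|\nabla(u-\Pi_N^0u)\|_{L^2}\le\|\nabla(u-v_N)\|_{L^2}$ for every $v_N\in V_N$. Taking $v_N=\Pi_Nu$ yields
\[
  \|\nabla(u-\Pi_N^0u)\|_{L^2}\le\|\nabla(u-\Pi_Nu)\|_{L^2}\le\|(I-\Pi_N)u\|_a ,
\]
so the modal gradient error is controlled by the Ritz energy error.

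To improve $(1+g)$ to $\sqrt{1+g^2}$, the two pieces must be combined orthogonally rather than by the triangle inequality. The modal tail $u-\Pi_N^0u$ is $L^2$-orthogonal to $V_N$, and $e\in V_N$, so
\[
  \|(I-\Pi_N)u\|_{L^2}^2=\|u-\Pi_N^0u\|_{L^2}^2+\|e\|_{L^2}^2
  \le\nu_*^{-1}(1+g^2)\,\|\nabla(u-\Pi_N^0u)\|_{L^2}^2 .
\]
This is exactly Pythagoras in $L^2$. Chaining it with the previous step gives \eqref{eq:si-CNV} with $C_N^{(V)}=\sqrt{(1+g^2)/\nu_*}$.

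The eigenvalue consequence then follows by applying \Cref{thm:si-liu} with $\widehat a=a$, $\widehat b=(\cdot,\cdot)$ and $\Pi_N$ the Ritz projection. Assumption (A3) holds because Pythagoras is automatic for the $a$-orthogonal projection, and the estimate just proved supplies $C_N=C_N^{(V)}$. Assumption (A1) holds for the positive-definite form by the spectral theorem, since the resolvent is compact on the bounded box.

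The step I expect to be the main obstacle is the claim $\|\nabla(u-\Pi_N^0u)\|\le\|(I-\Pi_N)u\|_a$. It relies on $V\ge0$ and on the modal projection being the gradient-best approximation. Both hold here, but the constant mode must be treated carefully: $a_0$ is only semidefinite on cosines, so I will check that the orthogonality argument survives the kernel of the gradient, which it does because the kernel mode lies in $V_N$.
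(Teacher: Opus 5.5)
Your proposal is correct and follows the paper's proof essentially step for step. You split $w=(u-\Pi_N^0u)+e$ $L^2$-orthogonally and combine the modal-tail estimate with the projection-gap lemma to get the factor $(1+g^2)/\nu_*$; then you use the best-$a_0$-approximation property of $\Pi_N^0$ and $V\ge0$ to pass to $\|w\|_a$, and finish by applying the abstract lower bound with the Ritz projection (your opening triangle-inequality estimate is just a harmless detour).
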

\begin{proof}
Write $w=(I-\Pi_N)u=(u-\Pi_N^0u)+e$ with the two parts $L^2$-orthogonal. By
\eqref{eq:si-modaltail} and \Cref{lem:si-projgap},
$\|w\|_{L^2}^2=\|u-\Pi_N^0u\|_{L^2}^2+\|e\|_{L^2}^2\le
(1+g^2)\nu_*^{-1}\|\nabla(u-\Pi_N^0u)\|^2$. Since $\Pi_N^0$ is the best
$a_0$-approximation and $V\ge0$,
$\|\nabla(u-\Pi_N^0u)\|\le\|\nabla(u-\Pi_Nu)\|\le\|w\|_a$, giving
\eqref{eq:si-CNV}. The eigenvalue bound is \eqref{eq:si-liu} with $\Pi_N$ the Ritz
projection.
\end{proof}

At $V=0$ one has $g=0$ and $C_N^{(V)}=\nu_*^{-1/2}$, the optimal constant of
\Cref{lem:si-CN}: the mechanism degrades gracefully to the sharp eigenbasis value,
and the potential enters only through the factor $\sqrt{1+g^2}$. Everything comes
from one assembly and one eigensolve of the standard matrix $K+P$.

\begin{remark}[Sharper computable form factor]\label{rem:si-tail}
The crude factor $\eta_V=\|V\|_{L^\infty}\lambda_{1,N}^{-1/2}$ (or its symmetric
variant, giving $g^2=\min(X^2,X)$ with $X=\|V\|_{L^\infty}/\lambda_{1,N}$) is often
pessimistic. Since only the \emph{unresolved} part of $Vv_N$ matters, one may use
the tail-refined factor
$\eta_V'=\sup_{0\ne v_N\in V_N}\|(I-\Pi_N^0)Vv_N\|_{L^2}/\|v_N\|_a$, computable in
closed form as $\eta_V'=\sqrt{\lambda_{\max}(Q-P^2,\,K+P)}$ with
$Q_{\bm i\bm j}=(V^2\varphi_{\bm i},\varphi_{\bm j})$---one extra
largest-eigenvalue solve on the same matrices. Because $\eta_V'^2\sim
\operatorname{Var}_\Omega(V)/\nu_*$ scales with the potential's fluctuation over the
box relative to the spectral gap, it \emph{shrinks under refinement}, unlike the
$N$-independent crude factor.
\end{remark}

\paragraph{Application to the Coulomb operator.} The Coulomb potential is neither
bounded nor nonnegative, so $\|V\|_{L^\infty}=\infty$ and the form is not positive
definite: \Cref{thm:si-CNV} does not apply directly. The next section removes both
obstructions with a single device---a coercivity shift $\sigma$ built from a
localized Hardy inequality---after which the shifted form $a_\sigma$ is positive
definite with $a_\sigma(v,v)\ge(1-\epsilon)\|\nabla v\|^2$, the role of
$\lambda_{1,N}$ is played by $\mu_{1,N}+\sigma$, and the abstract form factor of
\Cref{def:si-etaV} stays \emph{finite} despite the singularity. Propagating these
through the proof above turns the constant \eqref{eq:si-CNV} into
$\widehat C_N^2=(1+g^2)/[(1-\epsilon)\nu_*]$ with
$g=(\mu_{1,N}+\sigma)^{-1/2}\eta_V$---the constant used in the main text and proved
in \Cref{si:proj-proof}.

\section{Localized Hardy inequality and coercivity}\label{si:hardy}
Throughout this section the potential is the multi-centre attractive Coulomb
potential
\begin{equation}\label{eq:V-explicit}
  V(x)=-\sum_{i=1}^{K}\frac{Z_i}{|x-a_i|},\qquad Z_i>0,\quad
  Z_{\rm tot}=\sum_{i=1}^{K}Z_i,
\end{equation}
with nuclei $a_i\in\Omega$ each satisfying
$\operatorname{dist}(a_i,\partial\Omega)\ge\rho$ for a common inradius $\rho>0$.
Thus $V\le0$, its only singularities are the nuclei $a_i$, and the associated form
is $a(u,v)=(\nabla u,\nabla v)+(Vu,v)$. The attractive (singular) part is
$V^-:=\max(-V,0)=\sum_i Z_i/|x-a_i|$, so $-(Vu,u)=(V^-u,u)$ throughout. The two
lemmas below make $-\Delta+V$ coercive after a shift, with all constants explicit
in $\rho$, $Z_{\rm tot}$, and free proof parameters.

\begin{lemma}[Localized 3D Hardy inequality]\label{lem:hardy}
Let $a\in\Omega$ with $\operatorname{dist}(a,\partial\Omega)\ge\rho$. For every
inner radius $r_1\in(0,\rho)$ and splitting parameter $\delta>0$, and all
$v\in H^1(\Omega)$,
\begin{equation}\label{eq:hardy}
  \int_\Omega\frac{|v|^2}{|x-a|^2}\,dx
  \le C_{\rm grad}(\delta)\,\|\nabla v\|^2 + C_{L^2}(\delta,r_1)\,\|v\|^2 ,
\end{equation}
with $C_{\rm grad}(\delta)=4(1+\delta)$ and
$C_{L^2}(\delta,r_1)=r_1^{-2}+4(1+1/\delta)(\rho-r_1)^{-2}$.
\end{lemma}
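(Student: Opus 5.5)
The plan is a cutoff argument: split the integral into a small ball around the nucleus, where the whole-space Hardy inequality is used, and the rest of $\Omega$, where the weight $|x-a|^{-2}$ is simply bounded. Fix $r_1\in(0,\rho)$ and $\delta>0$, and write $B_r:=\{x:|x-a|<r\}$. Since $\operatorname{dist}(a,\partial\Omega)\ge\rho$, the open ball $B_\rho$ lies in $\Omega$. Take the Lipschitz radial cutoff
\[
\chi(x)=\min\Bigl(1,\max\Bigl(0,\tfrac{\rho-|x-a|}{\rho-r_1}\Bigr)\Bigr),
\]
so that $\chi\equiv1$ on $B_{r_1}$, $\chi\equiv0$ outside $B_\rho$, $0\le\chi\le1$, and $|\nabla\chi|\le(\rho-r_1)^{-1}$ a.e.

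\emph{Exterior part.} On $\Omega\setminus B_{r_1}$ we have $|x-a|^{-2}\le r_1^{-2}$. Hence
\[
\int_{\Omega\setminus B_{r_1}}\frac{|v|^2}{|x-a|^2}\,dx\le r_1^{-2}\|v\|^2 .
\]
This produces the $r_1^{-2}$ term of $C_{L^2}$.

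\emph{Interior part.} On $B_{r_1}$ we have $\chi=1$, so
\[
\int_{B_{r_1}}\frac{|v|^2}{|x-a|^2}\,dx\le\int\frac{|\chi v|^2}{|x-a|^2}\,dx .
\]
The function $\chi v$ lies in $H^1(\Omega)$, and its support sits in $\overline{B_\rho}$, where it vanishes on the sphere $|x-a|=\rho$. Its extension by zero therefore belongs to $H^1(\R^3)$. The classical Hardy inequality in $\R^3$, $\int|w|^2/|x-a|^2\le4\|\nabla w\|^2$ (proved for $C_c^\infty$ and extended by density), then applies to $w=\chi v$. Expand
\[
\nabla(\chi v)=\chi\nabla v+v\nabla\chi
\]
and use Young's inequality $|p+q|^2\le(1+\delta)|p|^2+(1+1/\delta)|q|^2$. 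With $\chi\le1$ and $|\nabla\chi|\le(\rho-r_1)^{-1}$ this gives
\[
4\|\nabla(\chi v)\|^2\le4(1+\delta)\|\nabla v\|^2+4(1+1/\delta)(\rho-r_1)^{-2}\|v\|^2 .
\]

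Adding the interior and exterior estimates yields exactly \eqref{eq:hardy}, with $C_{\rm grad}=4(1+\delta)$ and $C_{L^2}=r_1^{-2}+4(1+1/\delta)(\rho-r_1)^{-2}$.

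The only step needing care is that $\chi v$, extended by zero, lies in $H^1(\R^3)$ with the product rule valid for every $v\in H^1(\Omega)$, with no boundary regularity beyond the fact that the support stays inside $\Omega$. This follows because $\chi$ is Lipschitz with compact support in $\overline{B_\rho}\subset\overline\Omega$. One might worry that the ball only touches $\partial\Omega$, but $\chi$ vanishes on $\partial B_\rho$, so no trace issue arises. Everything else is elementary.
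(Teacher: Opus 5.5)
Your proof is correct and follows the paper's argument: a radial cutoff equal to $1$ on $B(a,r_1)$ with $|\nabla\chi|\le(\rho-r_1)^{-1}$, the trivial bound $|x-a|^{-2}\le r_1^{-2}$ off $B(a,r_1)$, the whole-space Hardy inequality applied to $\chi v$, and Young's inequality with parameter $\delta$. Your Lipschitz cutoff is the consistent choice, since the paper's smooth, compactly supported $\chi$ cannot also be linear with slope exactly $(\rho-r_1)^{-1}$ on the whole annulus; the zero-extension step you flag is made fully rigorous by running the argument with $\rho'<\rho$, where $\chi v$ has compact support in $\Omega$, and then letting $\rho'\to\rho$.
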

\begin{proof}
Take a radial cutoff $\chi\in C_c^\infty(B(a,\rho))$, $\chi\equiv1$ on $B(a,r_1)$,
linear on the annulus (so $|\nabla\chi|\le(\rho-r_1)^{-1}$). Outside $B(a,r_1)$,
$|x-a|^{-2}\le r_1^{-2}$, contributing $r_1^{-2}\|v\|^2$. Inside, write $w=\chi v\in
H^1_0(B(a,\rho))$ and apply the classical Hardy inequality
$\int_{\R^3}|w|^2/|x-a|^2\le4\|\nabla w\|^2$. Young's inequality
$\|\nabla(\chi v)\|^2\le(1+\delta)\|\nabla v\|^2+(1+1/\delta)\|\nabla\chi\|_\infty^2\|v\|^2$
splits the gradient, giving $C_{\rm grad}(\delta)=4(1+\delta)$ and the annulus
term $4(1+1/\delta)(\rho-r_1)^{-2}$. The sharp Hardy constant $4$ is recovered as
$\delta\to0$.
\end{proof}

\begin{remark}[The cutoff parameters are free]\label{rem:free}
The pair $(r_1,\delta)$ is an artefact of the cutoff, not of the operator; any
admissible pair makes \eqref{eq:hardy} true. They are chosen \emph{after} $\rho$ by
an offline minimization of the resulting shift. The elementary choice
$\delta=1,\ r_1=\rho/2$ gives $C_{\rm grad}=8,\ C_{L^2}=36/\rho^2$, which overshoots
$C_{\rm grad}$ by a factor $2$; the joint optimum lowers the certified spectral
order substantially (\Cref{si:precision}).
\end{remark}

\begin{lemma}[Coercivity shift and form factor]\label{lem:coercive}
For $\epsilon\in(0,1)$ set
\begin{equation}\label{eq:sigma}
  \sigma:=\frac{\epsilon\,C_{L^2}}{C_{\rm grad}}
          +\frac{Z_{\rm tot}^2\,C_{\rm grad}}{4\epsilon}.
\end{equation}
Then $a_\sigma(v,v):=a(v,v)+\sigma\|v\|^2\ge(1-\epsilon)\|\nabla v\|^2$ for all
$v\in H^1(\Omega)$, and the projection (potential) form factor $\eta_V$---distinct from the
auxiliary eigenvalue $\eta$ used for the sharp shift $\sigma=-\eta$ in
\Cref{si:precision}---is
\begin{equation}\label{eq:etaV}
  \eta_V=Z_{\rm tot}\Bigl(\tfrac{C_{\rm grad}}{1-\epsilon}
        +\tfrac{C_{L^2}}{\mu_{1,N}+\sigma}\Bigr)^{1/2}.
\end{equation}
\end{lemma}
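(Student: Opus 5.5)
The plan is to bound the attractive part $(V^-v,v)$ by the localized Hardy inequality of \Cref{lem:hardy}, applied once per nucleus, and then split the result with a weighted Young inequality. The weights in that split are chosen so that the kinetic coefficient comes out exactly $\epsilon$ and the $L^2$ coefficient comes out exactly $\sigma$ from \eqref{eq:sigma}. The form factor will follow from the same Hardy estimate, combined with the coercivity just obtained and the discrete Rayleigh bound on $V_N$.

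\emph{Coercivity.} Write $X=\|\nabla v\|$ and $Y=\|v\|$. Since $\operatorname{dist}(a_i,\partial\Omega)\ge\rho$ for every $i$, \Cref{lem:hardy} holds at every nucleus with the same $C_{\rm grad}$ and $C_{L^2}$. By Cauchy--Schwarz,
\[
(V^-v,v)=\sum_i Z_i\int_\Omega\frac{|v|}{|x-a_i|}\,|v|\le\sum_i Z_i\Bigl\|\frac{v}{|x-a_i|}\Bigr\|\,Y\le Z_{\rm tot}\,\bigl(C_{\rm grad}X^2+C_{L^2}Y^2\bigr)^{1/2}\,Y .
\]
Next apply $Z_{\rm tot}\,ab\le \frac{\epsilon}{C_{\rm grad}}a^2+\frac{Z_{\rm tot}^2C_{\rm grad}}{4\epsilon}b^2$ with $a=(C_{\rm grad}X^2+C_{L^2}Y^2)^{1/2}$ and $b=Y$. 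This gives
\[
(V^-v,v)\le \epsilon X^2+\Bigl(\frac{\epsilon C_{L^2}}{C_{\rm grad}}+\frac{Z_{\rm tot}^2C_{\rm grad}}{4\epsilon}\Bigr)Y^2=\epsilon X^2+\sigma Y^2 .
\]
Since $a(v,v)=X^2-(V^-v,v)$, it follows that $a_\sigma(v,v)\ge(1-\epsilon)X^2$.

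\emph{Form factor.} For $w\in L^2(\Omega)$ and $v_N\in V_N$, the same Cauchy--Schwarz and Hardy steps give
\[
|(Vw,v_N)|\le Z_{\rm tot}\,\|w\|\,\bigl(C_{\rm grad}\|\nabla v_N\|^2+C_{L^2}\|v_N\|^2\bigr)^{1/2}.
\]
In \Cref{def:si-etaV} the energy norm is that of the shifted form $a_\sigma$. Two bounds convert the bracket into this norm. The coercivity just proved gives $\|\nabla v_N\|^2\le\|v_N\|_{a_\sigma}^2/(1-\epsilon)$. Because $v_N$ lies in $V_N$, the discrete Rayleigh bound for the pencil $(a_\sigma,b)$ on $V_N$ gives $\|v_N\|^2\le\|v_N\|_{a_\sigma}^2/(\mu_{1,N}+\sigma)$; here $\mu_{1,N}+\sigma>0$ by coercivity. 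Substituting both bounds yields exactly \eqref{eq:etaV}.

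The main obstacle is only bookkeeping. First, the Young weights must be tuned so that the constant reproduces \eqref{eq:sigma} exactly rather than merely up to a factor. Second, the Hardy estimate for a sum of centres must cost only $Z_{\rm tot}$. This works because each term carries the same bound $(C_{\rm grad}X^2+C_{L^2}Y^2)^{1/2}$, so summing the charges produces the factor $Z_{\rm tot}$ directly, with no cross terms between nuclei.
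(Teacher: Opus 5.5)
Your proof is correct and is essentially the paper's. Both use the localized Hardy inequality at every nucleus with common constants, a Young split whose weight (the paper's $s=2\epsilon/(Z_{\rm tot}C_{\rm grad})$) makes the gradient coefficient exactly $\epsilon$ and the $L^2$ coefficient exactly $\sigma$, and Cauchy--Schwarz plus the two norm conversions for $\eta_V$. The paper applies Young pointwise to the kernel, $1/r\le (s/2)/r^2+1/(2s)$, before integrating, while you apply Cauchy--Schwarz first and Young to the product of norms; your ordering has the small benefit of handling the mixed pairing $(Vw,v_N)$ that the definition of $\eta_V$ actually requires, where the paper only writes $(Vv_N,v_N)$. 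One minor point, which the paper also glosses over: the non-strict coercivity only gives $a_\sigma(1,1)\ge0$ on constants, so $\mu_{1,N}+\sigma>0$ needs the observation that your Cauchy--Schwarz step is strict for $v\neq0$.
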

\begin{proof}
We bound the attractive part $(V^-v,v)=\sum_{i=1}^{K}Z_i\int_\Omega
|v|^2/|x-a_i|\,dx$ from above, one nucleus at a time. At the $i$-th centre apply
the pointwise Young bound $1/r\le(s/2)/r^2+1/(2s)$ with $r=|x-a_i|$ and a common
$s=2\epsilon/(Z_{\rm tot}C_{\rm grad})>0$, then control the resulting
$\int_\Omega|v|^2/|x-a_i|^2$ by the localized Hardy inequality \eqref{eq:hardy}
(each $a_i$ has $\operatorname{dist}(a_i,\partial\Omega)\ge\rho$, so the same
$C_{\rm grad},C_{L^2}$ apply):
\[
  Z_i\!\int_\Omega\!\frac{|v|^2}{|x-a_i|}
  \le \frac{Z_i s}{2}\Bigl(C_{\rm grad}\|\nabla v\|^2+C_{L^2}\|v\|^2\Bigr)
     +\frac{Z_i}{2s}\|v\|^2 .
\]
\emph{Summing over the $K$ centres} — i.e.\ over the terms of the potential
\eqref{eq:V-explicit} — replaces each $Z_i$ by $\sum_i Z_i=Z_{\rm tot}$, so
\[
  (V^-v,v)\ \le\
  \underbrace{\frac{Z_{\rm tot}s\,C_{\rm grad}}{2}}_{=\ \epsilon}\,\|\nabla v\|^2
  +\Bigl(\frac{Z_{\rm tot}s\,C_{L^2}}{2}+\frac{Z_{\rm tot}}{2s}\Bigr)\|v\|^2 ,
\]
the gradient coefficient collapsing to $\epsilon$ by the choice of $s$. Hence in
$a_\sigma(v,v)=\|\nabla v\|^2-(V^-v,v)+\sigma\|v\|^2$ the coefficient of
$\|\nabla v\|^2$ becomes $1-\epsilon$, and the coefficient of $\|v\|^2$ is
$\sigma-\bigl(\tfrac12 Z_{\rm tot}s\,C_{L^2}+\tfrac12 Z_{\rm tot}/s\bigr)
=\sigma-\bigl(\epsilon C_{L^2}/C_{\rm grad}+Z_{\rm tot}^2C_{\rm grad}/(4\epsilon)\bigr)=0$
by the definition \eqref{eq:sigma} of $\sigma$ and $s=2\epsilon/(Z_{\rm tot}C_{\rm grad})$.
Therefore $a_\sigma(v,v)\ge(1-\epsilon)\|\nabla v\|^2$, as claimed. The form factor
follows from Cauchy--Schwarz applied to $(Vv_N,v_N)$ together with \eqref{eq:hardy},
using $\|\nabla v_N\|^2\le(1-\epsilon)^{-1}\|v_N\|_{a_\sigma}^2$ and
$\|v_N\|^2\le(\mu_{1,N}+\sigma)^{-1}\|v_N\|_{a_\sigma}^2$.
\end{proof}

\section{Proof of Theorem 2 (projection lower bound)}\label{si:proj-proof}
The proof combines the abstract lower bound \eqref{eq:si-liu} with the
positive-definite form-factor constant \eqref{eq:si-CNV}, but with one change of
metric: for the Coulomb operator the natural energy is not $a_0=(\nabla\cdot,
\nabla\cdot)$ but the shifted form $a_\sigma$, and it is here that the factor
$(1-\epsilon)$ enters. We isolate that step as a lemma, because it is the only
place the coercivity constant of \Cref{lem:coercive} is used, and its role is
easy to misread.

\begin{lemma}[Shifted modal-tail estimate]\label{lem:shifted-tail}
Let $\sigma$ and $\epsilon\in(0,1)$ be as in \Cref{lem:coercive}, so that
$a_\sigma(v,v)=a(v,v)+\sigma\|v\|^2\ge(1-\epsilon)\|\nabla v\|^2$ for all
$v\in H^1(\Omega)$. Let $\Pi_N^0$ be the modal ($L^2$-orthogonal) truncation onto
the cosine space $V_N$, whose omitted modes have Laplacian eigenvalue
$\ge\nu_*$. Then for every $v\in H^1(\Omega)$
\begin{equation}\label{eq:shifted-tail}
  \|(I-\Pi_N^0)v\|_{L^2}^2\ \le\ \frac{1}{\nu_*}\,\|\nabla(I-\Pi_N^0)v\|^2
  \ \le\ \frac{1}{(1-\epsilon)\,\nu_*}\,\|(I-\Pi_N^0)v\|_{a_\sigma}^2 .
\end{equation}
\end{lemma}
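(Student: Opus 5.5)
The two inequalities in \eqref{eq:shifted-tail} are proved separately; they have quite different character.

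\emph{First inequality: the spectral-gap estimate.} Write $w:=(I-\Pi_N^0)v$ and expand $v$ in the $L^2$-orthonormal tensor cosine (Neumann) basis $\{\varphi_{\bm m}\}$ of $\Omega$. These modes are the Neumann eigenfunctions of $-\Delta$, with eigenvalues $\nu_{\bm m}$, so they are orthogonal in both $L^2$ and $(\nabla\cdot,\nabla\cdot)$. For $v\in H^1(\Omega)$ the series $\sum_{\bm m}\nu_{\bm m}c_{\bm m}^2$ converges and equals $\|\nabla v\|^2$.

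Then $w=\sum_{\bm m\notin\Lambda_N}c_{\bm m}\varphi_{\bm m}$, and Parseval in both norms gives
\[
\|w\|^2=\sum_{\bm m\notin\Lambda_N}c_{\bm m}^2\le\nu_*^{-1}\sum_{\bm m\notin\Lambda_N}\nu_{\bm m}c_{\bm m}^2=\nu_*^{-1}\|\nabla w\|^2 .
\]
This is exactly the argument of \Cref{lem:si-CN} with $\widehat a=a_0$, or equivalently \eqref{eq:si-modaltail}. The only point to justify is that $w\in H^1(\Omega)$ and that gradient Parseval holds for it. This follows from the spectral characterization of $H^1$ on the box through the Neumann eigenexpansion.

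\emph{Second inequality: coercivity applied to the tail.} Since $w\in H^1(\Omega)$, the coercivity of \Cref{lem:coercive} applies to $w$ itself:
\[
\|w\|_{a_\sigma}^2=a_\sigma(w,w)\ge(1-\epsilon)\|\nabla w\|^2 .
\]
Dividing by $(1-\epsilon)\nu_*>0$ yields the right-hand bound.

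The main thing to check is that the coercivity bound is used on $w$, and not on $v$ or on $\Pi_N^0 v$. Because $\Pi_N^0$ is not $a_\sigma$-orthogonal, we cannot split $\|v\|_{a_\sigma}$ Pythagorean-style. The lemma avoids this by stating the bound directly in terms of $\|w\|_{a_\sigma}$, so no such splitting is needed. Coercivity holds on all of $H^1(\Omega)$, including when $V$ is singular, because the Hardy-based shift controls $(V^-w,w)$ for arbitrary $H^1$ functions.

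One further subtlety concerns the sharp shift $\sigma=-\eta(\epsilon)$, which is computed only on the discrete space $V_N$. If $\sigma$ is that auxiliary eigenvalue, coercivity on all of $H^1(\Omega)$ is not automatic. The proof should therefore either use the analytic $\sigma$ of \eqref{eq:sigma}, which the lemma's hypothesis refers to, or take $\sigma$ at least as large as a certified lower bound for the continuous auxiliary operator $\epsilon(-\Delta)+V$. The hypothesis as stated covers the analytic $\sigma$, so the proof goes through with that choice.
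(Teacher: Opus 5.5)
Your proof is correct and is essentially the paper's own argument: the modal spectral-gap (Parseval) estimate for $w=(I-\Pi_N^0)v$, followed by the coercivity bound $a_\sigma(w,w)\ge(1-\epsilon)\|\nabla w\|^2$ applied to $w$ itself and chained with the first inequality. Your closing caveat about the discretely computed shift $\sigma=-\eta(\epsilon)$ agrees with the paper, which states that a certified \emph{lower} bound on $\eta(\epsilon)$ is what keeps that shift admissible on all of $H^1(\Omega)$.
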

\begin{proof}
The first inequality is the exact modal spectral-gap estimate
\eqref{eq:si-modaltail}: $w:=(I-\Pi_N^0)v$ contains only modes with Laplacian
eigenvalue $\ge\nu_*$, so $\|w\|_{L^2}^2\le\nu_*^{-1}\|\nabla w\|^2$. For the
second, apply the coercivity bound of \Cref{lem:coercive} to $w$ itself,
$\|\nabla w\|^2\le(1-\epsilon)^{-1}a_\sigma(w,w)=(1-\epsilon)^{-1}\|w\|_{a_\sigma}^2$.
Chaining the two gives \eqref{eq:shifted-tail}. The single factor $(1-\epsilon)^{-1}$
is exactly the price of measuring the modal tail in the shifted energy norm
$\|\cdot\|_{a_\sigma}$ instead of the pure Dirichlet norm $\|\nabla\cdot\|$.
\end{proof}

\begin{proof}[Proof of Theorem 2]
Pass to the shifted operator $H'=H+\sigma$ with form $a_\sigma$. By
\Cref{lem:coercive}, $a_\sigma$ is symmetric positive definite with eigenvalues
$\mu_k+\sigma>0$, so the abstract principle \eqref{eq:si-liu} applies to the pencil
$(a_\sigma,(\cdot,\cdot))$ on $V_N$:
\begin{equation}\label{eq:si-liu-shift}
  \mu_k+\sigma\ \ge\ \frac{\mu_{k,N}+\sigma}{1+C_N^2(\mu_{k,N}+\sigma)},
\end{equation}
with $C_N$ the Ritz projection constant \emph{for the form $a_\sigma$}. We bound
$C_N$ exactly as in \Cref{thm:si-CNV}, but with $\|\cdot\|_a$ there replaced by the
shifted norm $\|\cdot\|_{a_\sigma}$ and $\lambda_{1,N}$ by $\mu_{1,N}+\sigma$.
Writing $w=(I-\Pi_N)v=(v-\Pi_N^0v)+e$ with the two parts $L^2$-orthogonal and
$e=\Pi_N^0v-\Pi_Nv\in V_N$, the projection-gap bound \eqref{eq:si-projgap} gives
$\|e\|_{L^2}\le g\,\nu_*^{-1/2}\|\nabla(v-\Pi_N^0v)\|$ with
$g=(\mu_{1,N}+\sigma)^{-1/2}\eta_V$, while the \emph{shifted} modal-tail estimate
\eqref{eq:shifted-tail} of \Cref{lem:shifted-tail} controls the first part. Hence
\[
  \|w\|_{L^2}^2=\|v-\Pi_N^0v\|_{L^2}^2+\|e\|_{L^2}^2
  \le\frac{1+g^2}{\nu_*}\,\|\nabla(v-\Pi_N^0v)\|^2
  \le\frac{1+g^2}{(1-\epsilon)\,\nu_*}\,\|w\|_{a_\sigma}^2 ,
\]
the last step by \eqref{eq:shifted-tail} together with
$\|\nabla(v-\Pi_N^0v)\|\le\|\nabla(v-\Pi_Nv)\|$ ($\Pi_N^0$ is the best
$a_0$-approximation) and $\|\nabla(v-\Pi_Nv)\|^2\le(1-\epsilon)^{-1}\|w\|_{a_\sigma}^2$.
Therefore
\begin{equation}\label{eq:CNhat}
  C_N^2\ \le\ \widehat C_N^2:=\frac{1+g^2}{(1-\epsilon)\,\nu_*}
  \ =\ O(\nu_*^{-1})=O(N^{-2}),
\end{equation}
which is \eqref{eq:si-CNV} amplified by the single coercivity factor
$(1-\epsilon)^{-1}$ traced in \Cref{lem:shifted-tail}. Substituting \eqref{eq:CNhat}
into \eqref{eq:si-liu-shift} and subtracting $\sigma$ gives the bound stated as
Theorem 2 of the main text. Since $\sigma,\epsilon,g$ are independent of $N$, the
gap $\mu_{k,N}-\mu_k$ closes at rate $O(N^{-2})$ with no floor.
\end{proof}

\section{Precision-improvement techniques}\label{si:precision}
Four techniques move the certified spectral order from astronomical to practical; the first---the sharp coercivity constant---is the decisive one and is developed in full below.

\subsection{Sharp coercivity constant via the auxiliary eigenvalue}
\label{si:sharp-ceps}
The shift $\sigma$ of \eqref{eq:sigma} is the single quantity that governs the
certified gap: the dominant term of the projection bound is $\sigma^2/\nu_*$
(\Cref{si:proj-proof}), while the Galerkin values $\mu_{k,N}$ have long converged,
so reducing $\sigma$ is the only lever that sharpens the bound at fixed $N$. The
$\sigma$ of \eqref{eq:sigma} comes from the \emph{explicit} localized Hardy
inequality \eqref{eq:hardy}, optimized over the proof parameters
$(\delta,r_1,\epsilon)$; but even the optimal explicit value overestimates the
smallest admissible shift, because Hardy's inequality is not tight for the actual
potential and box. We record here the sharp replacement and why it equals
$-\eta$.

\paragraph{The smallest admissible shift is a variational infimum.} Fix
$\epsilon\in(0,1)$. The coercivity step needs a constant $C_\epsilon$ with
\begin{equation}\label{eq:form-bound-abstract}
  (V^-u,u)\ \le\ \epsilon\,\|\nabla u\|_{L^2(\Omega)}^2
             +C_\epsilon\,\|u\|_{L^2(\Omega)}^2
  \qquad\forall u\in H^1(\Omega),
\end{equation}
where $V^-=\max(-V,0)=\sum_i Z_i/|x-a_i|$ is the attractive (singular) part of the
Coulomb potential; the explicit $\sigma$ of \eqref{eq:sigma} is merely one upper
bound for the \emph{smallest} $C_\epsilon$ that makes
\eqref{eq:form-bound-abstract} hold. Rearranging \eqref{eq:form-bound-abstract} as
$\epsilon\|\nabla u\|^2-(V^-u,u)+C_\epsilon\|u\|^2\ge0$ for all $u$, the smallest
admissible constant is, by definition of the infimum of the Rayleigh quotient,
\begin{equation}\label{eq:ceps-eta}
  C_\epsilon^{\mathrm{opt}}=-\,\eta(\epsilon),\qquad
  \eta(\epsilon):=\inf_{0\ne u\in H^1(\Omega)}
    \frac{\epsilon\,\|\nabla u\|_{L^2(\Omega)}^2-(V^-u,u)}{\|u\|_{L^2(\Omega)}^2}.
\end{equation}
That is the whole content of ``$\sigma=-\eta$'': the required shift is the
\emph{negative} of a Rayleigh-quotient infimum, so it equals minus the ground
eigenvalue of the operator whose quadratic form is that numerator.

\paragraph{The infimum is the ground eigenvalue of an auxiliary operator.} The
numerator of \eqref{eq:ceps-eta} is the quadratic form of
\begin{equation}\label{eq:aux-operator}
  \mathcal A_\epsilon:=\epsilon\,(-\Delta)+V
  \qquad\text{on }H^1(\Omega)\ \text{(Neumann)},
\end{equation}
the Schr\"odinger operator with the Laplacian scaled by $\epsilon$ (here $V=-V^-$,
the full attractive potential). Hence $\eta(\epsilon)$ in \eqref{eq:ceps-eta} is
exactly the lowest Neumann eigenvalue of $\mathcal A_\epsilon$, and
$\sigma:=-\eta(\epsilon)$ is the sharp coercivity shift: every inequality in
which $\sigma$ appears---\Cref{lem:coercive} and the projection bound---remains
valid verbatim with this smaller shift, which is by construction the smallest
possible at that $\epsilon$. Two structural facts make it usable. First,
$\mathcal A_\epsilon$ is coercive as soon as $\epsilon>0$: in three dimensions the
scaled Dirichlet form $\epsilon\|\nabla u\|^2$ dominates the Coulomb singularity
(localized Hardy, \Cref{lem:hardy}), so $\eta(\epsilon)$ is finite and bounded
below. Second, because $V$ is attractive the constant mode $u\equiv1$ makes the
numerator $-(V^-1,1)<0$, so $\eta(\epsilon)<0$ and the shift
$\sigma=-\eta(\epsilon)>0$ is a genuine positive shift, as required. The true
singular $V$ enters \eqref{eq:ceps-eta} directly---no regularized $V_\rho$ and no
nonnegativity assumption---which is the ``handle $V$ directly'' property below.

\paragraph{No circularity: any admissible shift is enough, and $\eta$ only
refines it.} It may look as though $\eta(\epsilon)$ is being used to prove a bound
on the very operator whose eigenvalue it is. It is not. The logic of Theorem 2 is
one-directional: \emph{any} constant $\sigma$ for which
$a_\sigma(v,v)\ge(1-\epsilon)\|\nabla v\|^2$ holds --- equivalently any
$C_\epsilon\ge C_\epsilon^{\mathrm{opt}}=-\eta(\epsilon)$ in
\eqref{eq:form-bound-abstract} --- yields a valid lower bound; the bound is never
sharper than the value of $\sigma$ actually certified, and is correct for every
admissible $\sigma$. The explicit localized-Hardy shift $\sigma$ of
\eqref{eq:sigma} is one such admissible constant, computed with no reference to
$\eta$ at all, and already gives a reasonable, fully rigorous $\eta(\epsilon)$
(indeed $-\sigma\le\eta(\epsilon)$, so the Hardy shift is a certified upper bound
for the sharp shift $-\eta$). One may therefore stop there. The auxiliary
eigenvalue enters only as an \emph{optional} improvement: replacing the Hardy
$\sigma$ by the smaller $-\eta(\epsilon)$ tightens the same bound. And $\eta$ can
itself be bootstrapped --- a coarse admissible shift certifies coercivity of
$\mathcal A_\epsilon$, which in turn certifies a sharper enclosure of its own
ground eigenvalue $\eta$, which may then be fed back --- each pass using an
already-certified value, so no step assumes what it proves. What must ultimately be
verified rigorously (Stage~2) is a \emph{lower} bound on $\eta(\epsilon)$, since
that is the direction that keeps $\sigma=-\eta$ admissible; a computed $\eta$ that
is only approximate is used solely to \emph{choose} $\epsilon$ and is harmless to
validity.

\paragraph{Assembly and the certification bootstrap.} The auxiliary operator
\eqref{eq:aux-operator} needs no new assembly: in the cosine basis its action on a
coefficient block $X$ is
\begin{equation}\label{eq:aux-matvec}
  \mathcal A_\epsilon X=[(-\Delta)+V]\,X+(\epsilon-1)\,K_{\mathrm{diag}}\!\circ X,
\end{equation}
the Hamiltonian matvec plus a diagonal kinetic rescaling. Its ground state lives
in the fully even sector (the one containing the constant mode), so the auxiliary
solve enjoys the same parity reduction as the main solve; one adds a
positive-definite shift $s_0$, solves $\mathcal A_\epsilon+s_0I$ by a
preconditioned iterative eigensolver, and recovers $\eta(\epsilon)$ by subtracting
$s_0$. Since $\eta(\epsilon)$ is itself an eigenvalue \emph{used in a lower bound},
a rigorous $\sigma$ requires a certified \emph{lower} bound for $\eta$
(equivalently a certified \emph{upper} bound for $C_\epsilon^{\mathrm{opt}}$). This
is supplied by the machinery of this paper applied to $\mathcal A_\epsilon$
itself: the explicit shift \eqref{eq:sigma} makes
$\mathcal A_\epsilon+\sigma I$ coercive as a coarse preliminary constant, after
which the projection bound (or the Lehmann--Goerisch stage) certifies $\eta$ from
below---the preliminary bound need only be finite, not sharp. The optimization
over $\epsilon$ runs on a small grid; $C_\epsilon^{\mathrm{opt}}=-\eta(\epsilon)$
is monotone decreasing in $\epsilon$ and the auxiliary solves are independent.

\paragraph{Magnitude.} For $\mathrm H_2^+$ ($Z_{\mathrm{tot}}=2$, $\rho=8$,
box $[-10,10]\times[-8,8]^2$) the auxiliary eigenvalue converges to
$C_\epsilon^{\mathrm{opt}}\approx0.894$ at $\epsilon=0.408$, against the optimized
explicit shift $\sigma\approx10.46$---an $11.6\times$ reduction. Through the
$\sigma^2/\nu_*$ term this shrinks the certified gap $\mu_{1,N}-L_1$ by a factor
$245$ ($N=96$) rising to $308$ ($N=512$), and the separation certificate
$L_2>U_1$ is first attained at the smallest computed order $N=96$ rather than
$N\approx360$. These are double-precision Stage-I values; their rigorous
counterparts follow the bootstrap above with the interval arithmetic of
\Cref{si:a4}.

\subsection{Directly certified second-eigenvalue separator} Rather than borrow a
heuristic $\rho$, the separator is a certified lower bound on $\mu_2$ obtained by a
single-vector Lehmann--Goerisch computation on the symmetry sector whose ground
state \emph{is} the global second eigenvalue. This yields the certified separator
$\mu_2(\Omega_2)\ge-0.3395656252$ --- a genuine Lehmann--Goerisch lower bound in
the sense of \Cref{thm:lg-si}, whose defect-positivity condition (A5) and
Lehmann parameter are verified for this single-vector computation exactly as in
\Cref{si:lg} --- improving the ground-state lower bound by $5.8\times10^{-4}$ over
the conservative Stage-A separator. This certifies the \emph{box} eigenvalue
$\mu_2^{\mathrm{Neu}}(\Omega_2)$ from below; the $k{=}2$ case of Theorem~1,
$\mu_2^{\mathrm{Neu}}(\Omega_2)\le\lambda_2(\R^3)$ (proved in \Cref{si:enc}),
promotes it to a lower bound on the \emph{whole-space} second eigenvalue
$\lambda_2(\R^3)$---the form in which a globally supported (Gaussian) trial space
must receive its separator, since Rayleigh--Ritz there yields only upper bounds on
$\lambda_k(\R^3)$.

\subsection{Parity-sector reduction} The cosine (and sine) bases split by mode
parity into eight symmetry sectors. The ground state lives in the
(even,even,even) sector and the second eigenvalue in (odd,even,even); assembling
one sector at a time reduces both the degrees of freedom (by $8\times$) and the
matrix--vector cost (by $16\times$), which is what allows the $N=64$ dense
Dirichlet solve to fit the $251$\,GB host.

\subsection{Handling $V$ directly, no positivity assumption} Because the sharp
constant is $\Ceps=-\eta$ of the \emph{actual} operator, the method never requires
$V\ge0$ nor a regularized $V_\rho$: a negative part is admissible as long as the
shifted form $a_\sigma$ is positive definite, which \Cref{lem:coercive}
guarantees.

\section{Proof of Theorem 3 (Lehmann--Goerisch)}\label{si:lg}
The second stage sharpens the ground-state bound once a separator $\rho$ with
$\lambda_1<\rho\le\lambda_2$ is available. The mechanism is Temple's: a single
Rayleigh--Ritz value together with a second moment already yields a lower bound,
and the Lehmann--Goerisch construction is its optimal several-vector form. We
prove the transparent single-vector case in full and then state the matrix
generalization used in the computation.

\paragraph{Hypotheses.} We use the following conditions, stated here so that this
appendix is self-contained; they are the ``A1--A5'' referred to in the main text.
\begin{itemize}
\item[(A1)] $H=-\Delta+V$ is self-adjoint and bounded below, with a discrete
spectrum $\lambda_1\le\lambda_2\le\cdots$ at the bottom, below the essential
spectrum;
\item[(A2)] a separator $\rho$ is known with $\lambda_1<\rho\le\lambda_2$; in the
box realization $\rho:=L_2$ from the Stage-A certificate, valid whenever
$\inf(L_2)>U_1$;
\item[(A3)] trial vectors $u_1,\dots,u_q\in D(H)$ are given (approximations of the
lowest eigenfunctions) and are linearly independent;
\item[(A4)] the Gram-type matrices $A_0=[(u_i,u_j)]$,
$A_1=[a(u_i,u_j)]=[(u_i,Hu_j)]$, and $A_2=[(Hu_i,Hu_j)]$ are evaluated as verified
enclosures; in the spectral basis $H$ is applied exactly, while when $H^2$ is
inaccessible $A_2$ is replaced by Goerisch's auxiliary-problem form $a_2$, a
computable upper representation of $(Hu_i,Hu_j)$;
\item[(A5)] the defect matrix $B=A_0-2\rho A_1+\rho^2A_2$ is positive definite,
verified as an interval enclosure.
\end{itemize}
Set $A=A_0-\rho A_1$ and $B=A_0-2\rho A_1+\rho^2A_2$, and let $\nu_1\ge\nu_2\ge
\cdots$ be the generalized eigenvalues of the pencil $Ax=\nu Bx$.

\begin{theorem}[Lehmann--Goerisch, restated]\label{thm:lg-si}
Under \textup{(A1)--(A5)}, for each $k$ with $1\le k\le q$ the interval
$[\,\rho-\rho/(1-\nu_k),\,\rho\,)$ contains at least $k$ eigenvalues of $H$,
counted with multiplicity. In particular, at $k=1$ the left endpoint
$\rho-\rho/(1-\nu_1)$ is a rigorous lower bound for $\lambda_1$.
\end{theorem}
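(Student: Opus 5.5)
The plan is to prove \Cref{thm:lg-si} in the abstract variational setting of \Cref{as:lg} of the main text: $M$ is the shifted energy form, $N$ is the $L^2$ form, and $b_G(w_i,w_j)$ plays the role of the ``$(Hu_i,Hu_j)$'' entry of (A4). That entry is needed only as an upper representation, which is exactly the direction used below. Let $(\lambda_j,\varphi_j)$ be the $M$-orthonormal eigenpairs from (A2). For a coefficient vector $c\in\R^n$, set $v=\sum_i c_iv_i$, $w=\sum_i c_iw_i$ and $\beta_j:=M(v,\varphi_j)$. Then $N(v,\varphi_j)=\beta_j/\lambda_j$, and the completeness relation gives
\[
c^TA_0c=M(v,v)=\sum_j\beta_j^2,\qquad c^TA_1c=N(v,v)=\sum_j\beta_j^2/\lambda_j .
\]

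\textbf{Key inequality (Bessel).} By \eqref{eq:lg-T} the family $\{T\varphi_j\}$ is $b_G$-orthonormal. By \eqref{eq:lg-w}, $b_G(w,T\varphi_j)=N(v,\varphi_j)=\beta_j/\lambda_j$. Since $b_G$ is only positive semidefinite, I will state Bessel's inequality as
\[
b_G(w,w)\ \ge\ \sum_j b_G(w,T\varphi_j)^2 ,
\]
proved by expanding $b_G\bigl(w-\sum_{j\le J}b_G(w,T\varphi_j)T\varphi_j,\ \cdot\ \bigr)\ge0$ and letting $J\to\infty$. Writing $t_j:=1-\rho/\lambda_j$, this gives
\[
c^TAc=\sum_j\beta_j^2t_j,\qquad c^TBc\ \ge\ \sum_j\beta_j^2t_j^2 .
\]
This is the only place where the exact constraint \eqref{eq:lg-w} enters, and it explains why only an upper representation of the squared-operator term is required.

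\textbf{Counting argument.} Fix $k\le q$, so $\nu_k<0$. Suppose, for contradiction, that fewer than $k$ eigenvalues lie in $I_k=[\rho-\rho/(1-\nu_k),\rho)$.
\begin{enumerate}
\item Let $Z$ be the $k$-dimensional span of the pencil eigenvectors for $\nu_1,\dots,\nu_k$ (ascending order). Since $B\succ0$, every $c\in Z$ satisfies $c^TAc\le\nu_k\,c^TBc$.
\item Impose the at most $k-1$ linear conditions $\beta_j=0$ for the eigenvalues $\lambda_j\in I_k$. This leaves some $c\in Z$, $c\ne0$, and linear independence of the $v_i$ gives $v\ne0$.
\item Using $-\nu_k>0$ together with the Bessel bound,
\[
0\ \ge\ c^TAc-\nu_k\,c^TBc\ \ge\ \sum_{\lambda_j\notin I_k}\beta_j^2\,t_j(1-\nu_kt_j).
\]
\item Each summand is nonnegative. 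If $\lambda_j\ge\rho$ then $t_j\ge0$. If $\lambda_j<\rho-\rho/(1-\nu_k)$ then $t_j<1/\nu_k$; this translation of the left endpoint into $t$-space must be checked, including the case $\lambda_j<0$ for the shifted problem (excluded here since $M\succ0$).
\end{enumerate}

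\textbf{Main obstacle: strictness at the right endpoint.} The sum in step 3 vanishes only if every $\beta_j\ne0$ sits at a root of $t(1-\nu_kt)$, i.e.\ $\lambda_j=\rho$ or $\lambda_j$ equal to the left endpoint. The left endpoint belongs to $I_k$, so its $\beta_j$ were already forced to zero, leaving only $\lambda_j=\rho$. This does not contradict the sum vanishing, so a separate argument is needed. I would first try the following. If $v$ were a pure $\rho$-eigenvector, then $c^TAc=0$ and step 1 gives $\nu_k\,c^TBc\ge0$, forcing $c^TBc=0$ and contradicting $B\succ0$. In the mixed case, any $\beta_j\ne0$ with $\lambda_j\ne\rho$ lies off the roots and contributes strictly positively, again a contradiction. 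This yields at least $k$ eigenvalues in $I_k$.

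\textbf{Corollary.} For $k=1$ under $\lambda_1<\rho\le\lambda_2$, the only eigenvalue below $\rho$ is $\lambda_1$, so $\lambda_1\ge\rho-\rho/(1-\nu_1)$. The sufficient condition for $B\succ0$ stated in \Cref{thm:lg-main} would be checked separately and is not needed here.
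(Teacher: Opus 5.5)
Your proof is correct, and it takes a genuinely different and more complete route than the paper's.

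\emph{What the paper does.} It proves only the single-vector case itself. It integrates Temple's quadratic $(\lambda-\lambda_1)(\lambda-\rho)$, which is nonnegative on $\{\lambda_1\}\cup[\rho,\infty)$, against the spectral measure of one trial vector $u\in D(H)$. For several vectors it only rewrites $A$ and $B$ as the defect forms $(u,g)$ and $\|g\|^2$ with $g=(I-\rho H)u$. It then cites Lehmann (1963) and Goerisch--Haunhorst (1985) for the inclusion statement itself.

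\emph{What you do.} You prove the several-vector Goerisch statement outright, in three steps. First, $b_G$-orthonormality of the $T\varphi_j$ and Bessel's inequality give $c^{\top}Bc\ge\sum_j\beta_j^2t_j^2$. Second, a dimension count on the span of the first $k$ pencil eigenvectors removes the fewer than $k$ coefficients belonging to $I_k$. Third, the quadratic $t(1-\nu_kt)$, whose roots are exactly $\lambda=\rho$ and $\lambda=\rho-\rho/(1-\nu_k)$, takes over the role of Temple's quadratic.

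\emph{What each route buys.} The paper's argument is shorter and transparent for $n=1$. However, it presupposes the separation $\lambda_1<\rho\le\lambda_2$ and the exact moment $(Hu,Hu)$, which is what is unavailable for $H^1$ Coulomb trial functions. Your argument handles every $k\le q$ with no separation hypothesis; that hypothesis enters only in the corollary. It needs only the constraint $b_G(w_i,Tv)=N(v_i,v)$ with a semidefinite $b_G$. It also shows that $A_2$ is used solely through the lower bound $c^{\top}A_2c\ge\sum_j\beta_j^2/\lambda_j^2$, which is why an upper representation of the squared-operator term is harmless.

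\emph{Your choice of normalization.} Working in the main text's $(M,N,b_G,T)$ normalization was the right call. The SI's literal matrices $A_0=[(u_i,u_j)]$, $A_1=[(u_i,Hu_j)]$, $A_2=[(Hu_i,Hu_j)]$ are the reciprocal normalization. For an exact eigenvector they return the left endpoint $1/\lambda_1$ instead of $\lambda_1$.

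\emph{Two minor simplifications.} First, Bessel's inequality $M(v,v)\ge\sum_j\beta_j^2$ suffices in place of completeness, because the surplus enters step 3 with the positive factor $1-\nu_k$. Second, your right-endpoint discussion collapses to one line. For $0\ne c\in Z$ one has $c^{\top}Ac\le\nu_kc^{\top}Bc<0$. This forces some $\beta_j\ne0$ with $\lambda_j<\rho$, hence with $\lambda_j$ below the left endpoint, and so a strictly positive term in the step-3 sum. This also makes linear independence of the $v_i$ unnecessary; the main-text assumption does not list it anyway.
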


\begin{proof}
\emph{Single-vector (Temple) bound.} Take one normalized trial vector $u$,
$\|u\|=1$, with Rayleigh quotient $\mu=(u,Hu)$ and second moment $s=(Hu,Hu)$, and
suppose $\mu<\rho\le\lambda_2$. Because exactly one eigenvalue lies below $\rho$,
the spectral measure $d\omega=d(E_\lambda u,u)\ge0$ of $u$ is supported in
$\{\lambda_1\}\cup[\rho,\infty)$, and there the quadratic
$(\lambda-\lambda_1)(\lambda-\rho)$ is nonnegative---it vanishes at $\lambda_1$
and has both factors nonnegative for $\lambda\ge\rho$. Hence, by the spectral
theorem,
\begin{equation}\label{eq:temple-key}
  0\le\int(\lambda-\lambda_1)(\lambda-\rho)\,d\omega
   =\bigl((H-\lambda_1)(H-\rho)u,\,u\bigr)
   = s-(\lambda_1+\rho)\mu+\lambda_1\rho .
\end{equation}
Grouping the terms linear in $\lambda_1$ and dividing by $\rho-\mu>0$ gives
Temple's lower bound
\begin{equation}\label{eq:temple}
  \lambda_1\ \ge\ \mu-\frac{s-\mu^2}{\rho-\mu}.
\end{equation}
Every quantity on the right is a verified enclosure, so \eqref{eq:temple} is
guaranteed. Its \emph{only} inequality is \eqref{eq:temple-key}, which uses solely
the separation $\lambda_1<\rho\le\lambda_2$; enlarging $\rho$ (any
$\rho\le\lambda_2$ is admissible) increases the subtracted term, so a
conservative separator degrades sharpness but never validity---the monotonicity
in $\rho$ asserted in the main text.

\emph{Several-vector generalization.} Replacing the scalar Rayleigh quotient by
the trial space $\operatorname{span}\{u_1,\dots,u_q\}$ turns \eqref{eq:temple}
into a matrix problem. For $u=\sum_i x_i u_i$ write the defect
$g:=(I-\rho H)u=\sum_i x_i g_i$ with $g_i:=(I-\rho H)u_i$. A direct expansion,
using self-adjointness $(u_i,Hu_j)=(Hu_i,u_j)$, gives the two identities
\begin{equation}\label{eq:AB-defect}
  x^{\!\top}\!Ax=(u,g),\qquad
  x^{\!\top}\!Bx=(g,g)=\|(I-\rho H)u\|^2 ,
\end{equation}
so $B=[(g_i,g_j)]$ is the defect Gram matrix and hypothesis (A5), $B\succ0$, is
exactly the statement that the defects $g_1,\dots,g_q$ are linearly independent.
The generalized eigenvalues $\nu_k$ of $Ax=\nu Bx$ are therefore the stationary
values of the defect ratio $x^{\!\top}\!Ax/x^{\!\top}\!Bx$ over the trial space.
Lehmann's theorem (Lehmann 1963; Goerisch--Haunhorst 1985) identifies the image
of these stationary values under the M\"obius map
\begin{equation}\label{eq:mobius}
  \tau(\nu)=\rho-\frac{\rho}{1-\nu},
\end{equation}
which is strictly monotone on $(-\infty,1)$---increasing here, since
$\tau'(\nu)=-\rho/(1-\nu)^2>0$ for $\rho<0$---as lower bounds for the eigenvalues
below $\rho$: for each $k$, the interval $[\,\tau(\nu_k),\,\rho\,)=
[\,\rho-\rho/(1-\nu_k),\,\rho\,)$ contains at least $k$ eigenvalues of $H$,
counted with multiplicity. This is the exact several-vector form of
\eqref{eq:temple}: the defect identities \eqref{eq:AB-defect} (hypothesis A4)
make the comparison with the rank-$q$ model exact, and the sole inequality is
again the positivity $B\succ0$ (A5), certified as an enclosure. At $k=1$ the left
endpoint is a rigorous lower bound for $\lambda_1$, of Rayleigh--Ritz quality.
Finally, seeding $\rho:=L_2$ from the Stage-A certificate discharges the external
hypothesis (A2) whenever $\inf(L_2)>U_1$, so the bound is unconditional.
\end{proof}

\subsection{Solving the auxiliary constraint \texorpdfstring{\eqref{eq:lg-w-si}}{(8)}:
exact \texorpdfstring{$w_i$}{wi} versus enclosed defect}\label{si:a4}
The one nontrivial input to the Lehmann--Goerisch matrices is the auxiliary
vector $w_i$ of hypothesis (A4)---Eq. ~(8) of the main text, restated here in
the spectral realization,
\begin{equation}\label{eq:lg-w-si}
  b_G(w_i,Tv)=N(v_i,v)\qquad\forall v\in D .
\end{equation}
Only through $w_i$ does the ``squared-operator'' matrix
$A_2=[\,b_G(w_i,w_j)\,]$ enter, and $A_2$ is the sole term of the defect matrix
$B=A_0-2\rho A_1+\rho^2A_2$ that is not a plain Gram matrix of the trial vectors.
This subsection sets out, in full detail, how \eqref{eq:lg-w-si} is discharged so
that the resulting $A_2$ entries---and hence the reported bound---are rigorous.

\paragraph{The exact solve, and why it is unavailable here.} In the spectral
realization (\Cref{thm:lg-si}) $X=V_{N'}$, $b_G=a_\sigma$ on $V_{N'}$, and $T$ is
the zero-padding embedding $V_N\hookrightarrow V_{N'}$, so \eqref{eq:lg-w-si}
reduces to a single symmetric positive-definite linear system
\begin{equation}\label{eq:a4-solve}
  \widehat H'_{N'}\,w_i=\iota\,v_i,\qquad \widehat H'_{N'}=\widehat H_{N'}+cI\succ0,
\end{equation}
with $\widehat H_{N'}$ the order-$N'$ shifted-Coulomb stiffness matrix, $c>0$ a
shift making it positive definite, and $\iota$ the mass-coupling of $v_i$. The
certificate consumes only the scalar
\begin{equation}\label{eq:A2-def-si}
  (A_2)_{ii}=\langle v_i,\widehat H'^{-1}_{N'}v_i\rangle=\langle v_i,w_i\rangle
\end{equation}
(and its off-diagonal analogues). The obstruction to an \emph{exact} $w_i$ is
structural: the Coulomb term is represented by a \emph{dense} interval moment
matrix---every basis pair couples through the Laplace representation
$1/r=\tfrac{2}{\sqrt\pi}\int_0^\infty e^{-r^2t^2}\,dt$---so $\widehat H'_{N'}$ is a
dense $D\times D$ interval matrix ($D$ up to $\sim\!6.9\times10^4$ per sector).
No closed form inverts it and no finite exact arithmetic solves
\eqref{eq:a4-solve} in intervals; $w_i$ can be obtained only approximately, and
(A4) cannot hold pointwise. We therefore satisfy \eqref{eq:lg-w-si} \emph{in the
enclosure sense}, via the Goerisch defect identity---which is exactly what
Lehmann--Goerisch permits: the minimization defining the sharpest $w_i$ may be
inexact, and only the certified value of $A_2$ must be rigorous.

\paragraph{The defect identity (exact).} Let $\tilde w$ be an approximate solution
of $\widehat H'_{N'}w=v$---conjugate gradients on the floating-point midpoint
matrix $\operatorname{mid}(\widehat H'_{N'})$---and form the residual
$r=v-\widehat H'_{N'}\tilde w$ using the \emph{full interval} operator. For any
SPD $\widehat H'_{N'}$ and any $\tilde w$ the following is an exact algebraic
identity:
\begin{equation}\label{eq:goerisch-star-si}
  \langle v,\widehat H'^{-1}_{N'}v\rangle
  =2\langle\tilde w,v\rangle-\langle\tilde w,\widehat H'_{N'}\tilde w\rangle
   +\langle r,\widehat H'^{-1}_{N'}r\rangle .
\end{equation}
(Substitute $r=v-\widehat H'_{N'}\tilde w$ into the last term and use
self-adjointness $\langle\widehat H'_{N'}\tilde w,\widehat H'^{-1}_{N'}v\rangle
=\langle\tilde w,v\rangle$; the $\tilde w$-quadratic terms cancel.) Every term
except the last is directly computable in interval arithmetic from
$\tilde w$, $v$, and $\widehat H'_{N'}$. The single non-computable term is the
\emph{defect energy} $\langle r,\widehat H'^{-1}_{N'}r\rangle$, the energy of the
part of $w$ still missing from $\tilde w$, namely
$e=\widehat H'^{-1}_{N'}r=w-\tilde w$.

\paragraph{Explicit computation of the residual $r$ and its norm.} The identity
\eqref{eq:goerisch-star-si} is rigorous only if $r=v-\widehat H'_{N'}\tilde w$ is
itself an enclosure, so the product $\widehat H'_{N'}\tilde w$ must be evaluated in
interval arithmetic---this is the one matrix--vector product the certificate
depends on, and we spell it out. The shifted operator splits as
\begin{equation}\label{eq:opsplit}
  \widehat H'_{N'}=K+cI+P,
\end{equation}
with $K$ the \emph{diagonal} kinetic matrix ($K_{\bm i\bm i}=\nu_{\bm i}$, the sum
of the one-dimensional cosine-mode eigenvalues) and
$P_{\bm i\bm j}=(V\varphi_{\bm i},\varphi_{\bm j})$ the Coulomb potential block,
whose entries are the certified shifted-Coulomb moment products
\eqref{eq:moment-si} (each a verified \texttt{Interval\{Float64\}} from
\Cref{si:moments}). Writing the floating-point iterate $\tilde w$ as a
\emph{degenerate} (point) interval vector, the product is formed componentwise
with directed rounding,
\begin{equation}\label{eq:matvec}
  (\widehat H'_{N'}\tilde w)_{\bm i}
   =(\nu_{\bm i}+c)\,\tilde w_{\bm i}
    \;\oplus\;\bigoplus_{\bm j} P_{\bm i\bm j}\,\tilde w_{\bm j},
\end{equation}
where $\oplus,\bigoplus$ denote interval sum and product carried out in
\texttt{IntervalArithmetic.jl} (outward rounding on every operation). Because the
Coulomb block is a sum over the two nuclei of tensor (Kronecker) factors
$B^{(s)}_x\!\otimes B^{(s)}_y\!\otimes B^{(s)}_z$ over the Gauss--Legendre
$t$-grid, \eqref{eq:matvec} is evaluated \emph{matrix-free} as a sequence of
directed-rounding Kronecker matvecs---one per axis per $t$-node---so no dense
$D\times D$ interval matrix need be stored to form the product, and the same
$t$-grid and moments that assemble $\widehat H_{N'}$ are reused. The resulting
$r=v\ominus\widehat H'_{N'}\tilde w$ is an interval vector that, by construction,
\emph{contains the true residual for every operator consistent with the certified
moment enclosures}: it absorbs both the conjugate-gradient inexactness of
$\tilde w$ (a plain floating-point vector) and the interval width of every entry
of $\widehat H'_{N'}$. Its norm enters \eqref{eq:goerisch-dagger-si} only through
the upper bound $\|r\|^2$, computed as the upward-rounded sum of squared
magnitudes,
\begin{equation}\label{eq:rnorm}
  \|r\|^2\ \le\ \Bigl(\textstyle\sum_{\bm i}\operatorname{mag}(r_{\bm i})^2\Bigr)^{\!\triangle},
  \qquad \operatorname{mag}(r_{\bm i})=\max\{|\inf r_{\bm i}|,\,|\sup r_{\bm i}|\},
\end{equation}
where $(\cdot)^\triangle$ denotes rounding the accumulation toward $+\infty$. This
$\|r\|^2$ is a guaranteed over-estimate of the true squared residual norm, which
is all the one-sided enclosure below requires; it is the only place the
matrix--vector product \eqref{eq:matvec} feeds the certificate.

\paragraph{The enclosure (one-sided spectral bound).} We do not compute $e$. Its
energy is bracketed by
\begin{equation}\label{eq:goerisch-dagger-si}
  0\ \le\ \langle r,\widehat H'^{-1}_{N'}r\rangle
        \ \le\ \frac{\|r\|^2}{\lambda_{\min}(\widehat H'_{N'})},
\end{equation}
the lower endpoint exact ($\widehat H'_{N'}\succ0$ makes the term a squared norm)
and the upper endpoint from $\widehat H'^{-1}_{N'}\preceq
\lambda_{\min}(\widehat H'_{N'})^{-1}I$. Substituting
\eqref{eq:goerisch-dagger-si} into \eqref{eq:goerisch-star-si} yields the
certified enclosure of the matrix entry,
\begin{equation}\label{eq:A2-encl-si}
  (A_2)_{ii}\in
   2\langle\tilde w,v\rangle-\langle\tilde w,\widehat H'_{N'}\tilde w\rangle
   +\Bigl[\,0,\ \|r\|^2/\lambda_{\min}(\widehat H'_{N'})\,\Bigr],
\end{equation}
with the off-diagonal $(A_2)_{ij}$ enclosed by the polarization of
\eqref{eq:goerisch-star-si}. The upper endpoint needs a rigorous
$\lambda_{\min}(\widehat H'_{N'})=c+\lambda_{\min}(\widehat H_{N'})\ge
c+\mu_{\mathrm{lo}}$, where $\mu_{\mathrm{lo}}=\inf(\mu_1^{N'})$ is the verified
\texttt{lehmann\_behnke} enclosure of the sector ground state already in hand;
with $c=1$ and $\mu_{\mathrm{lo}}\approx-0.55$ one has
$\lambda_{\min}(\widehat H'_{N'})\ge0.45>0$, and the run-time assertion
$\lambda_{\min}(\widehat H'_{N'})>0$ re-verifies definiteness before the entry is
accepted.

\paragraph{Why the inexact solve cannot inflate the bound.} Two facts close the
argument. First, $r$ and $\|r\|^2$ are formed with the \emph{interval} operator,
so they absorb \emph{both} the conjugate-gradient inexactness and the interval
uncertainty of every entry of $\widehat H'_{N'}$ (the Coulomb moments are
themselves certified intervals, \Cref{si:moments}); the midpoint solve is only a
preconditioned guess whose error re-enters solely through the enclosed defect
term. Second, the closing M\"obius transform
$\widehat\lambda=\rho-\rho/(1-\nu)$ with $\nu$ from $Az=\nu Bz$ and
$B=A_0-2\rho A_1+\rho^2A_2$ is monotone in $A_2$ over the operating region;
replacing the exact $A_2$ by the enclosure \eqref{eq:A2-encl-si} and evaluating in
interval arithmetic returns an interval provably containing the true bound, and
the one-sided defect term can only move the reported infimum in the safe
(downward) direction. Hence an inexact $w_i$ can never inflate $L_1$: it can only
widen the certificate, never falsify it.

\paragraph{Magnitude of the price.} Because CG drives $\|r\|\sim10^{-13}$, the
defect enclosure width $\|r\|^2/\lambda_{\min}$ is $\sim10^{-24}$---orders of
magnitude below the $A_2$ interval width $\sim10^{-10}$ that interval rounding
already accrues across the $D\sim3.5$--$6.9\times10^4$ matrix--vector products.
The auxiliary step is therefore negligible in width while fully rigorous.
\Cref{tab:goerisch-si} reports the certificate for the four symmetry sectors
underlying the $\Omega_2$ enclosures; the Goerisch $A_2$ block
\eqref{eq:goerisch-star-si}--\eqref{eq:goerisch-dagger-si} is identical across the
four independent solver modules.

\begin{table}[t]\centering\footnotesize
\caption{Goerisch defect certificates for the four symmetry sectors (all on
$\Omega_2$). $A_0=\langle v,\widehat H'_{N'}v\rangle$ and $A_2$ are verified
enclosures (midpoints shown); $B\succ0$ is the LG validity condition; the defect
enclosure $\|r\|^2/\lambda_{\min}(\widehat H'_{N'})$ of
\eqref{eq:goerisch-dagger-si} is the entire cost of not solving (A4)/\,(8)
exactly.}
\label{tab:goerisch-si}
\renewcommand{\arraystretch}{1.2}
\begin{tabular}{llccccc}
\toprule
sector & $N/N'$ & $A_0$ & $A_2$ & $B$ & CG $\|r\|$ & defect\\
\midrule
$\lambda_1$ Neu.\ (eee) & 64/80 & 0.4490494043 & 2.2277098615 & $1.37\!\times\!10^{-2}$ & $9.5\!\times\!10^{-13}$ & $2.0\!\times\!10^{-24}$\\
$\lambda_2$ Neu.\ (oee) & 48/64 & 0.6672097159 & 1.4999810866 & $5.43\!\times\!10^{-3}$ & $8.5\!\times\!10^{-13}$ & $1.6\!\times\!10^{-24}$\\
$\lambda_2^{D}$ Dir.\ (eoo) & 48/64 & 0.6671323114 & 1.5000211548 & $6.75\!\times\!10^{-3}$ & $9.3\!\times\!10^{-13}$ & $1.9\!\times\!10^{-24}$\\
$\lambda_1^{D}$ Dir.\ (ooo) & 48/64 & 0.4494915093 & 2.2269760099 & $1.01\!\times\!10^{-1}$ & $9.8\!\times\!10^{-13}$ & $2.1\!\times\!10^{-24}$\\
\bottomrule
\end{tabular}
\end{table}

\subsection{Weinstein, Temple, and Lehmann--Goerisch: which moment controls the
bound}\label{si:weinstein}
It is worth situating the Lehmann--Goerisch bound against the two classical
single-vector lower bounds it generalizes, because the comparison explains why the
second stage is worth its extra machinery. Let $\psi$ be one normalized trial
vector with Rayleigh quotient $\bar\lambda=\langle\psi,\mathcal H\psi\rangle$ and
residual $\sigma=\|(\mathcal H-\bar\lambda)\psi\|$, and let $\varepsilon$ be the
eigenvector error. Two accuracies govern everything:
$\bar\lambda-\lambda_1=O(\varepsilon^2)$ (the Rayleigh quotient is second order),
while $\sigma=O(\varepsilon)$ (the residual is only first order).

\emph{Weinstein \textup{(1934)}} uses the spectral-theorem fact
$\operatorname{dist}(\bar\lambda,\operatorname{spec}\mathcal H)\le\sigma$: once the
counting condition $\bar\lambda+\sigma\le\lambda_2$ isolates $\lambda_1$,
\begin{equation}\label{eq:weinstein}
  \lambda_1\ \ge\ \bar\lambda-\sigma .
\end{equation}
It needs only two moments of one vector and no shift, but by subtracting the full
$\sigma$ it is \emph{first order}, $\lambda_1-(\bar\lambda-\sigma)=O(\sigma)$: it
discards the quadratic accuracy of $\bar\lambda$. \emph{Temple \textup{(1928)}}
restores it with a separator $\rho\in(\bar\lambda,\lambda_2]$,
$\lambda_1\ge\bar\lambda-\sigma^2/(\rho-\bar\lambda)$, whose error is
\emph{second order}, $O(\sigma^2)$. \emph{Lehmann--Goerisch} is the several-vector
generalization of Temple (\Cref{thm:lg-si}): it keeps the $O(\sigma^2)$ order and
minimizes the constant over the trial subspace. For a \emph{simple} eigenvalue
evaluated on a single trial vector---with the moment
$\langle\mathcal Hu,\mathcal Hu\rangle$ well defined---it reduces \emph{exactly} to
Temple's bound (point~(i) below, \eqref{eq:leh1=temple}); only a several-vector
subspace, or a cluster, lets it improve on Temple. Hence for the isolated
$\mathrm H_2^+$ ground state treated here the Temple and Lehmann--Goerisch bounds
essentially coincide,
\begin{equation}\label{eq:hierarchy}
  \underbrace{\bar\lambda-\sigma}_{\text{Weinstein},\,O(\sigma)}
  \ \le\
  \underbrace{\bar\lambda-\tfrac{\sigma^2}{\rho-\bar\lambda}}_{\text{Temple},\,O(\sigma^2)}
  \ \approx\
  \underbrace{L^{\mathrm{LG}}}_{\text{Lehmann--Goerisch},\,O(\sigma^2)}
  \ \le\ \lambda_1\ \le\ \underbrace{\bar\lambda}_{\text{RR upper}} ,
\end{equation}
the relation ``$\approx$'' being an \emph{equality} when the subspace is the single
Temple vector and a strict inequality $\text{Temple}<L^{\mathrm{LG}}$ only for a
cluster or a genuinely larger subspace. The price rises with the sharpness:
Weinstein needs only the counting condition, Temple a numerical $\rho$, and
Lehmann--Goerisch a trial subspace with its projected matrices and a rigorous
$\rho$---exactly the data Stage~A certifies.

\paragraph{Two-basis numerics.} We verified the hierarchy on the $\mathrm H_2^+$
ground state with both trial spaces of this work (floating point, not the verified
pipeline). On the \emph{cosine-spectral} basis, where $\sigma$ genuinely decreases
with mode count, the predicted orders appear cleanly---a log--log fit gives the
Weinstein gap $\propto\sigma^{1.00}$ and the Temple gap $\propto\sigma^{2.15}$,
with Lehmann--Goerisch steeper still---and at $n=1200$ the L--G gap
($1.5\times10^{-3}$) is $\sim\!170\times$ tighter than Weinstein's
($2.6\times10^{-1}$) on the identical basis. On the \emph{pure-Gaussian} basis of
\Cref{si:gaussian} the split is at its most extreme: the Coulomb cusp floors
$\sigma\approx0.57$, so Weinstein's isolation condition $\bar\lambda+\sigma\le
\lambda_2$ is violated at every basis size (the interval reaches above $\lambda_2$)
and Weinstein fails outright, while Lehmann--Goerisch, insensitive to the raw
$\sigma$, still reaches $\sim\!10^{-5}$ on the same vectors. This is the practical
content of the $O(\sigma)$-versus-$O(\sigma^2)$ distinction and the reason the
paper uses the several-vector construction throughout.

\paragraph{Three structural facts.} Beyond the asymptotic-order comparison, three
exact structural relations clarify what each method costs and buys.

\emph{(i) For a simple eigenvalue, Lehmann's bound coincides with Temple's.} With
a single trial vector $u$ (so $n=1$) and separator $\rho$, the Lehmann
pencil $Ax=\nu Bx$ (\Cref{thm:lg-si}) reduces to two scalars in the moments of one
vector, and the Möbius closing step \eqref{eq:mobius} returns exactly
\begin{equation}\label{eq:leh1=temple}
  L^{\mathrm{Leh}}_1(u,\rho)
  =\frac{\rho\,q-r}{\rho\,p-q}
  =\bar\lambda-\frac{\sigma^2}{\rho-\bar\lambda}
  =L^{\mathrm{Temple}}_1(u,\rho),
  \qquad
  p=\langle u,u\rangle,\ q=\langle u,\mathcal Hu\rangle,\ r=\langle\mathcal Hu,\mathcal Hu\rangle,
\end{equation}
with $\bar\lambda=q/p$ and $\sigma^2=r/p-\bar\lambda^2$ (a two-line identity:
clear the denominator). Thus Temple is not a weaker relative of Lehmann but its
$n=1$ special case; the two agree whenever a single vector is used.

\emph{(ii) Lehmann's advantage is a cluster of eigenvalues.} The several-vector
form ($n\ge2$) bounds the lowest $n$ eigenvalues \emph{below $\rho$
simultaneously}, from the $n$ negative generalized eigenvalues of the pencil $Ax=\nu Bx$.
Temple's scalar formula controls only the single eigenvalue nearest $\bar\lambda$
and degrades as soon as two eigenvalues crowd below $\rho$ (its separator
$\rho-\bar\lambda$ collapses); Lehmann instead diagonalizes the pencil on the
whole trial subspace and returns a full set of lower bounds, one per cluster
member. This is exactly the situation of the present paper, where the
$\lambda_1/\lambda_2$ separation is certified by bounding \emph{both} from the
relevant symmetry sectors.

\emph{(iii) Goerisch removes the $H^2$/strong-residual requirement.} Temple and
the raw Lehmann pencil need the moment $r=\langle\mathcal Hu,\mathcal Hu\rangle$,
i.e.\ $\|\mathcal Hu\|^2$; this is finite only for $u\in D(\mathcal H)\subset
H^2$, and for a singular Coulomb potential it forces the trial functions into the
operator domain and demands the (numerically delicate) strong image
$\mathcal Hu$. The Goerisch reformulation replaces $\langle\mathcal Hu,\mathcal Hu
\rangle$ by the \emph{form} quantity $b_G(w,w)$ with $w$ solving the auxiliary
problem \eqref{eq:lg-w-si} in the energy space (\Cref{si:a4}): only first
derivatives of the trial functions ever appear, so $u\in H^1$ suffices. The price
is the auxiliary solve---which \Cref{si:a4} shows is discharged rigorously from an
inexact $\tilde w$ by the defect identity---and a possible slight loss of
sharpness relative to the ideal $\|\mathcal Hu\|$ evaluation. For $H^1$-conforming
spectral and finite-element trial spaces this trade is decisive: it is what makes
the second stage computable at all for the singular operator.

\paragraph{Sharpness: the Lehmann lower bound matches the Ritz upper bound.} A
practically important consequence is that the Lehmann lower bound is \emph{as sharp
as} the Rayleigh--Ritz upper bound obtained from the same approximate
eigenvector: the two errors are of the same order in the eigenvector error, and
with the optimal separator the leading contributions coincide. We prove this for a
simple eigenvalue---the case relevant to the isolated $\mathrm H_2^+$ ground state
treated here. The corresponding statement for a cluster of eigenvalues requires the
several-vector pencil and a subspace-perturbation argument; it is outside the scope
of this paper and is developed separately.

\begin{proposition}[Two-sided sharpness, simple eigenvalue]\label{prop:sharp-simple}
Let $\lambda_1<\lambda_2\le\cdots$ be simple at the bottom, with orthonormal
eigenvectors $\psi_k$. Let a normalized trial vector expand as
$u=c_1\psi_1+\sum_{k\ge2}c_k\psi_k$ with eigenvector error
$\varepsilon^2=\sum_{k\ge2}c_k^2\to0$, and take any fixed separator
$\rho\in(\lambda_1,\lambda_2]$. Write $U:=\bar\lambda-\lambda_1$ for the
Rayleigh--Ritz upper-bound gap and $\Lambda:=\lambda_1-L^{\mathrm{Leh}}_1(u,\rho)$
for the Lehmann lower-bound gap. Then both are second order and
\begin{equation}\label{eq:sharp-simple}
  U=\sum_{k\ge2}c_k^2(\lambda_k-\lambda_1)\quad\text{(exactly)},\qquad
  \Lambda=\sum_{k\ge2}c_k^2(\lambda_k-\lambda_1)\,
          \frac{\lambda_k-\rho}{\rho-\lambda_1}+O(\varepsilon^4).
\end{equation}
In particular $\Lambda/U\to\big(\overline{\lambda-\rho}\big)/(\rho-\lambda_1)$ is
bounded, both gaps vanish as $O(\varepsilon^2)$, and with the sharp separator
$\rho=\lambda_2$ the dominant $k{=}2$ term drops out of $\Lambda$ entirely, so the
lower bound is at least as sharp as the upper bound term-by-term.
\end{proposition}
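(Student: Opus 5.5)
The plan is to reduce everything to the single-vector identity \eqref{eq:leh1=temple}, which gives $L^{\mathrm{Leh}}_1(u,\rho)=\bar\lambda-\sigma^2/(\rho-\bar\lambda)$. Both gaps can then be computed from the spectral expansion of $u$. Assume $u\in D(\mathcal H)$ with $\sum_k c_k^2\lambda_k^2<\infty$, so that all moments exist.

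\textbf{Step 1: the upper-bound gap.} With $\sum_k c_k^2=1$ we have $\bar\lambda=\sum_k c_k^2\lambda_k$. Subtracting $\lambda_1=\sum_k c_k^2\lambda_1$ gives $U=\sum_{k\ge2}c_k^2(\lambda_k-\lambda_1)$ exactly. Since each $\lambda_k-\lambda_1\le$ something bounded only under a moment assumption, I will instead keep $U$ as an exact series. Note also that $U=O(\varepsilon^2)$ whenever the $\lambda_k$-weighted tail is comparable to $\varepsilon^2$; this is the standing convention behind ``second order''.

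\textbf{Step 2: the lower-bound gap.} Write $\sigma^2=\sum_k c_k^2(\lambda_k-\bar\lambda)^2$ and expand around $\lambda_1$:
\[
\sigma^2=\sum_k c_k^2(\lambda_k-\lambda_1)^2-U^2
=\sum_{k\ge2}c_k^2(\lambda_k-\lambda_1)^2-U^2 .
\]
Then
\[
\Lambda=\lambda_1-\bar\lambda+\frac{\sigma^2}{\rho-\bar\lambda}
=-U+\frac{\sum_{k\ge2}c_k^2(\lambda_k-\lambda_1)^2-U^2}{\rho-\lambda_1-U}.
\]
Expand $1/(\rho-\lambda_1-U)=(\rho-\lambda_1)^{-1}\bigl(1+O(U)\bigr)$; this is valid because $\rho-\lambda_1>0$ is fixed and $U\to0$. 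The correction terms are $U^2$ and $U\cdot\sum c_k^2(\lambda_k-\lambda_1)^2$, and both are $O(\varepsilon^4)$. The leading part combines as
\[
\sum_{k\ge2}c_k^2(\lambda_k-\lambda_1)\Bigl[\frac{\lambda_k-\lambda_1}{\rho-\lambda_1}-1\Bigr]
=\sum_{k\ge2}c_k^2(\lambda_k-\lambda_1)\frac{\lambda_k-\rho}{\rho-\lambda_1},
\]
which is \eqref{eq:sharp-simple}.

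\textbf{Step 3: the consequences.} The ratio $\Lambda/U$ is a weighted average of $(\lambda_k-\rho)/(\rho-\lambda_1)$ with weights $c_k^2(\lambda_k-\lambda_1)/U$; this is what the notation $\overline{\lambda-\rho}$ in the statement means. Each coefficient is nonnegative because $\rho\le\lambda_2\le\lambda_k$. At $\rho=\lambda_2$ the $k=2$ coefficient vanishes. For $k\ge3$ the comparison with $U$ term by term requires $(\lambda_k-\rho)/(\rho-\lambda_1)$ to be at most $1$, or else it is read as ``the dominant term drops out''. I would state the term-by-term claim in this qualified form.

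\textbf{Main obstacle.} The main obstacle is making the $O(\varepsilon^4)$ remainder and the boundedness of $\Lambda/U$ rigorous. Both terms involve the weights $(\lambda_k-\lambda_1)^2$, which are unbounded for an unbounded operator, so $\varepsilon^2\to0$ alone does not control them. My first attempt is to assume that the energy-weighted tail $\sum_{k\ge2}c_k^2(\lambda_k-\lambda_1)^2$ is $O(\varepsilon^2)$; for a single normalized vector this is the same as $\sigma^2=O(\varepsilon^2)$, the residual being first order. Under that assumption every remainder is $O(\varepsilon^4)$, and boundedness of the ratio follows from $(\lambda_k-\rho)(\lambda_k-\lambda_1)\le(\lambda_k-\lambda_1)^2$.
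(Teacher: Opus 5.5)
Your proposal is correct and follows the paper's route. Both arguments reduce $\Lambda$ to the single-vector identity $L^{\mathrm{Leh}}_1=\bar\lambda-\sigma^2/(\rho-\bar\lambda)$ and expand in $\varepsilon^2$: the paper uses $(\rho q-r)/(\rho p-q)$ with $c_1^2=1-\varepsilon^2$, you use the variance form of $\sigma^2$, and both are expansions of the exact relation $\Lambda=\sum_{k\ge2}c_k^2(\lambda_k-\lambda_1)(\lambda_k-\rho)/(\rho-\bar\lambda)$.

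Your two qualifications are genuinely needed, a point the paper's proof passes over when it simply calls the numerator $O(\varepsilon^2)$. For an unbounded operator you need the added hypothesis $\sum_{k\ge2}c_k^2(\lambda_k-\lambda_1)^2=O(\varepsilon^2)$, and the term-by-term claim for $k\ge3$ holds only in your qualified form. One step is missing for the boundedness of $\Lambda/U$: pair your inequality $(\lambda_k-\rho)(\lambda_k-\lambda_1)\le(\lambda_k-\lambda_1)^2$ with the gap bound $U\ge(\lambda_2-\lambda_1)\varepsilon^2$.
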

\begin{proof}
Normalize $\sum_k c_k^2=1$. Directly,
$\bar\lambda=\sum_k c_k^2\lambda_k$, so
$U=\bar\lambda-\lambda_1=\sum_{k\ge2}c_k^2(\lambda_k-\lambda_1)$ \emph{exactly}.
For the lower bound use the single-vector identity \eqref{eq:leh1=temple},
$L^{\mathrm{Leh}}_1=(\rho q-r)/(\rho p-q)$ with $p=1$, $q=\sum_k c_k^2\lambda_k$,
$r=\sum_k c_k^2\lambda_k^2$. Substituting $c_1^2=1-\varepsilon^2$ and expanding in
$\varepsilon^2$,
\[
  \Lambda=\lambda_1-L^{\mathrm{Leh}}_1
   =\frac{\sum_{k\ge2}c_k^2(\lambda_k-\lambda_1)(\lambda_k-\rho)}{\rho-\lambda_1}
    +O(\varepsilon^4),
\]
the numerator being $O(\varepsilon^2)$ and the denominator $\rho-\lambda_1>0$
fixed; this is the second identity in \eqref{eq:sharp-simple}. Both leading sums
run over the \emph{same} contamination weights $c_k^2(\lambda_k-\lambda_1)>0$, and
differ only by the bounded factor $(\lambda_k-\rho)/(\rho-\lambda_1)$. When
$\rho=\lambda_2$ the $k{=}2$ term carries the factor $(\lambda_2-\rho)=0$ and
vanishes, while it is the largest term of $U$; hence
$\Lambda\le U\cdot\max_{k\ge2}(\lambda_k-\rho)/(\rho-\lambda_1)$ and, term by term
in the dominant direction, the lower-bound gap is no larger than the upper-bound
gap. If $u$ is an exact eigenvector ($\varepsilon=0$) both gaps are zero:
$L^{\mathrm{Leh}}_1=\bar\lambda=\lambda_1$.
\end{proof}

\noindent \Cref{prop:sharp-simple} makes ``as sharp as the upper bound'' precise
for a simple eigenvalue: both one-sided errors are $O(\varepsilon^2)$ in the
eigenvector error, they share the same contamination weights
$c_k^2(\lambda_k-\lambda_1)$, they vanish together on an exact eigenvector, and
with the sharp separator $\rho=\lambda_2$ the dominant $k{=}2$ term is absent from
the lower-bound gap. We verified the scaling numerically on a diagonal model (the
ratio $\Lambda/U$ tends to a constant as $\varepsilon$ halves, and
$\Lambda<U$ at $\rho=\lambda_2$), and the identity \eqref{eq:leh1=temple} to
machine precision.

\section{The exact/approximate contract, in full}\label{si:audit}
\Cref{tab:audit} classifies every quantity in the pipeline as \textsc{Exact}
(must be a verified interval enclosure for the guarantee to hold) or
\textsc{Approx} (may be ordinary floating point, affecting only sharpness).

\begin{table}[t]\centering\small
\caption{Full exact/approximate audit of the certified pipeline.}
\label{tab:audit}
\begin{tabular}{lll}
\toprule
Quantity & Class & Reason\\
\midrule
Shifted-Coulomb moments $G(\kappa,t)$ & \textsc{Exact} & define the operator\\
Matrix entries $A_0,A_1,A_2$ & \textsc{Exact} & define the pencil\\
Coercivity shift $\sigma$ (analytic route) & \textsc{Exact} & Stage-A validity\\
Auxiliary eigenvalue $\eta$ (sharp route) & \textsc{Exact} & Stage-A sharpness+validity\\
Hardy constants $C_{\rm grad},C_{L^2}$ & \textsc{Exact} & enter $\sigma$\\
Ritz value $\mu_{1,N}$ (upper) & \textsc{Exact} & upper bound\\
Ritz value $\mu_{2,N}$, separation test & \textsc{Exact} & separation certificate\\
Certified separator $\rho=L_2$ & \textsc{Exact} & seeds LG bracket\\
Projection constant $\widehat C_N$ & \textsc{Exact} & closes Stage-A bound\\
Gauss--Legendre nodes/weights & \textsc{Exact} & quadrature (with remainder)\\
Bernstein-ellipse remainder & \textsc{Exact} & bounds quadrature error\\
$B\succ0$ positivity test & \textsc{Exact} & LG hypothesis A5\\
Generalized eigenvalues $\nu_k$ & \textsc{Exact} & LG bracket endpoints\\
M\"obius transform $\rho-\rho/(1-\nu_k)$ & \textsc{Exact} & LG bracket\\
Trial vectors $u_i$ & \textsc{Approx} & any admissible choice\\
Coercivity parameter $\epsilon$ & \textsc{Approx} & any value in $(0,1)$\\
Cutoff parameters $r_1,\delta$ & \textsc{Approx} & any admissible pair\\
Goerisch auxiliary inner solve & \textsc{Approx} & residual enclosed exactly\\
\bottomrule
\end{tabular}
\end{table}

\section{Rigorous moment integrals}\label{si:moments}
Every matrix entry reduces to a one-dimensional shifted-Coulomb moment
\begin{equation}\label{eq:moment-si}
  G(\kappa,t)=\int_{-L}^{L}\cos\!\big(\kappa(x+L)\big)\,e^{-t^2(x-s)^2}\,dx ,
\end{equation}
assembled over a Gauss--Legendre grid in $t$ that represents the Coulomb kernel
$$1/|x-a|=(2/\sqrt\pi)\int_0^\infty e^{-t^2(x-a)^2}\,dt$$ for each nucleus $s$. The
closed form of \eqref{eq:moment-si} involves the complex scaled complementary error
function, for which no verified interval implementation is available. We avoid it
by a two-regime enclosure using only interval $\exp,\cos,\sqrt{\ \ }$: for large
$t$ the infinite-domain integral is elementary,
$\int_{-\infty}^{\infty}\cos(\kappa(x+L))e^{-t^2(x-s)^2}dx=
\cos(\kappa(s+L))(\sqrt\pi/t)e^{-\kappa^2/4t^2}$, and the truncation to $[-L,L]$ is
bounded by an explicit Gaussian tail estimate below the rounding floor; for small
$t$ the integrand is broad and analytic and a verified Gauss--Legendre rule with an
explicit Bernstein-ellipse remainder encloses it. The moment is thus enclosed to
machine precision with no special-function verification.

\section*{References}
\begin{enumerate}\small
\item X.~Liu, Guaranteed lower eigenvalue bounds for spectral Galerkin methods
with application to Schr\"odinger operators, \emph{preprint} (arXiv:2607.04247).
\item M.~Plum and X.~Liu, A two-stage method for guaranteed eigenvalue bounds,
\emph{preprint} (arXiv:2512.23182).
\item N.~J.~Lehmann, Optimale Eigenwerteinschlie\ss ungen, \emph{Numer.\ Math.}
\textbf{5} (1963) 246--272.
\item F.~Goerisch and H.~Haunhorst, Eigenwertschranken f\"ur Eigenwertaufgaben mit
partiellen Differentialgleichungen, \emph{Z.\ Angew.\ Math.\ Mech.} \textbf{65}
(1985) 129--135.
\item S.~Agmon, \emph{Lectures on Exponential Decay of Solutions of Second-Order
Elliptic Equations}, Princeton Univ.\ Press (1982).
\item X.~Liu, A framework of verified eigenvalue bounds for self-adjoint
differential operators, \emph{Appl.\ Math.\ Comput.} \textbf{267} (2015) 341--355.
\end{enumerate}


\end{document}